\let\origps@plain\ps@plain
\newcommand\MakePlainPagestyleEmpty{\let\ps@plain\ps@empty}
\newcommand\MakePlainPagestylePlain{\let\ps@plain\origps@plain}
\newtheorem{dfn}{Definition}[section]
\newtheorem{prop}{Proposition}[section]
\newtheorem{thm}{Theorem}[section]
\newtheorem{cor}{Corollary}[section]
\newtheorem{lem}{Lemma}[section]
\numberwithin{equation}{section}
\def\msquare{\mathord{\scalerel*{\Box}{gX}}}
\newcommand{\bca}{\begin{cases}}
\newcommand{\eca}{\end{cases}}
\newcommand{\lb}{\left(}
\newcommand{\rb}{\right)}
\newcommand{\lmb}{\left[}
\newcommand{\rmb}{\right]}
\newcommand{\lma}{\left\{}
\newcommand{\rma}{\right\}}
\newcommand{\csubset}{\subset\subset}
\newcommand{\co}{\Omega}
\newcommand{\po}{{\partial \Omega}}
\newcommand{\R}{\mathbb{R}}
\newcommand{\x}{\mathbf{x}}
\newcommand{\z}{\mathbf{z}}
\newcommand{\gd}{\nabla}
\newcommand{\rta}{\rightarrow}
\newcommand{\lw}{\left|}
\newcommand{\rw}{\right|}
\newcommand{\be}{\begin{equation}}
\newcommand{\ee}{\end{equation}}
\newcommand{\N}{\textbf{N}}
\newcommand{\bt}{\begin{thm}}
\newcommand{\et}{\end{thm}}
\newcommand{\bc}{\begin{cor}}
\newcommand{\ec}{\end{cor}}
\newcommand{\bl}{\begin{lem}}
\newcommand{\el}{\end{lem}}
\newcommand{\norm}[1]{\left\lVert#1\right\rVert}
\newcommand{\normm}[1]{{\left\vert\kern-0.25ex\left\vert\kern-0.25ex\left\vert #1 
    \right\vert\kern-0.25ex\right\vert\kern-0.25ex\right\vert}}
\newcommand\bigfrown[2][\textstyle]{\ensuremath{%
  \array[b]{c}\text{\resizebox{6ex}{.7ex}{$#1\frown$}}\\[-1.3ex]#1#2\endarray}}
\newcommand{\weakcv}{\rightharpoonup}
\newcommand\restr[2]{{
  \left.\kern-\nulldelimiterspace 
  #1 
  \vphantom{\big|} 
  \right|_{#2} 
  }}
\title{\textsc{Homogenization of Enhancing Thin Layers}}
\author{Zhonggan Huang\\  \texttt{huangzhonggan2020@outlook.com}}
\affil{Department of Mathematics, Southern University of Science and Technology}
\date{October 16, 2020}
\begin{document}

\maketitle

\setcounter{page}{1}
\begin{abstract}
   This paper derives an explicit formula for the effective diffusion tensor by using the solutions to some effective cell problems after homogenizing Road effective boundary conditions (EBCs). The concept of Road EBCs was proposed recently by H. Li and X. Wang, and in this paper, we extend the effective conditions on closed curves to those on patterns, especially on the included nodes. We also prove that homogenization process commutes with the derivation of Road EBCs. By analyzing the effective diffusion tensor, we obtain several rules for maximizing its trace with given Road-effective-diffusivity/scale and length/scale in each cell and define a notion of balanced patterns. Moreover, we give an estimate of the trace of the effective diffusion tensor of patterns satisfying some connectedness as Road-effective-diffusivity/scale goes to infinity.
\end{abstract}

\noindent{\textbf{Keywords}}. effective diffusion tensor, Road effective boundary condition, homogenization, balanced pattern

\section{Introduction}

The major issue is the commutation between the classical periodic homogenization and the ``Road" effective boundary condition (EBC) proposed by Li and Wang \citep{Li-2017,Li2020}. For \(\epsilon,\delta>0\), we consider the following family of problems,
\be
\begin{cases}
-\gd\cdot(\bar{\sigma}(\epsilon^{-1}\x)\gd u^{\delta,\epsilon}(\x))=f(\x),&\x\in\co\csubset\R^2,\\
u^{\delta,\epsilon}(\x)=0,&\x\in\po,
\end{cases}\label{origin}
\ee
where \(\bar{\sigma}(\x)\) is 1-periodic, and (in \(\msquare=(0,1)^2\)) equals \(\sigma=O(1/\delta)\) when \(\x\) lies in a \(\delta\)-net \(R_\delta^+\) defined by using a collection \(\mathcal{G}\), called ``pattern", of \(C^2\) curves in the flat torus \(\mathbb{T}^2\), and 1 elsewhere. Without losing generality, we assume that the curves in \(\mathcal{G}\) do not intersect one another at interior points and any two curves cannot compose a new smooth curve. 

We invoke a result in \citep{trudinger} that given \(\gamma\in\mathcal{G}\), when \(\delta>0\) is small, there will be a \(C^1\) curvilinear coordinate system \((s,\tau)\), where \(s\in[0,l]\) along the tangents, and \(\tau\in(-\delta,\delta)\) along some smooth unit normal field are both unit speed. Define \(R_\delta^\gamma\) to be the image of \((0,l)\times(-\delta,\delta)\), and \(R_{\delta,+}^\gamma=R_\delta^\gamma\cap\{\tau>0\}\). We further define \[\Gamma_1=\bigcup_{\gamma\in\mathcal{G}} \bigcap_{\delta>0} R_\delta^\gamma, R_\delta=\bigcup_{\gamma\in\mathcal{G}}R_\delta^\gamma\text{ and } R_{\delta}^+=\bigcup_{\gamma\in\mathcal{G}}R_{\delta,+}^\gamma.\]
After choosing a proper periodic extension of \(\mathcal{G}\) to \(\R^2\), we may consider the above curves and sets as distributed periodically in \(\R^2\). Notice that given a periodic extension of \(\mathcal{G}\), its planar translation is still an appropriate extension. In the major part of this article, we will ignore this small difference caused by translation because it will not change the ultimate effective model, but in Section 4, we will point out its importance in further analysis. A typical example is illustrated in Figure \ref{1typicalexample}.

\begin{figure}[htbp]
     \begin{subfigure}{0.3\textwidth}
         \includegraphics[width=1.8\textwidth]{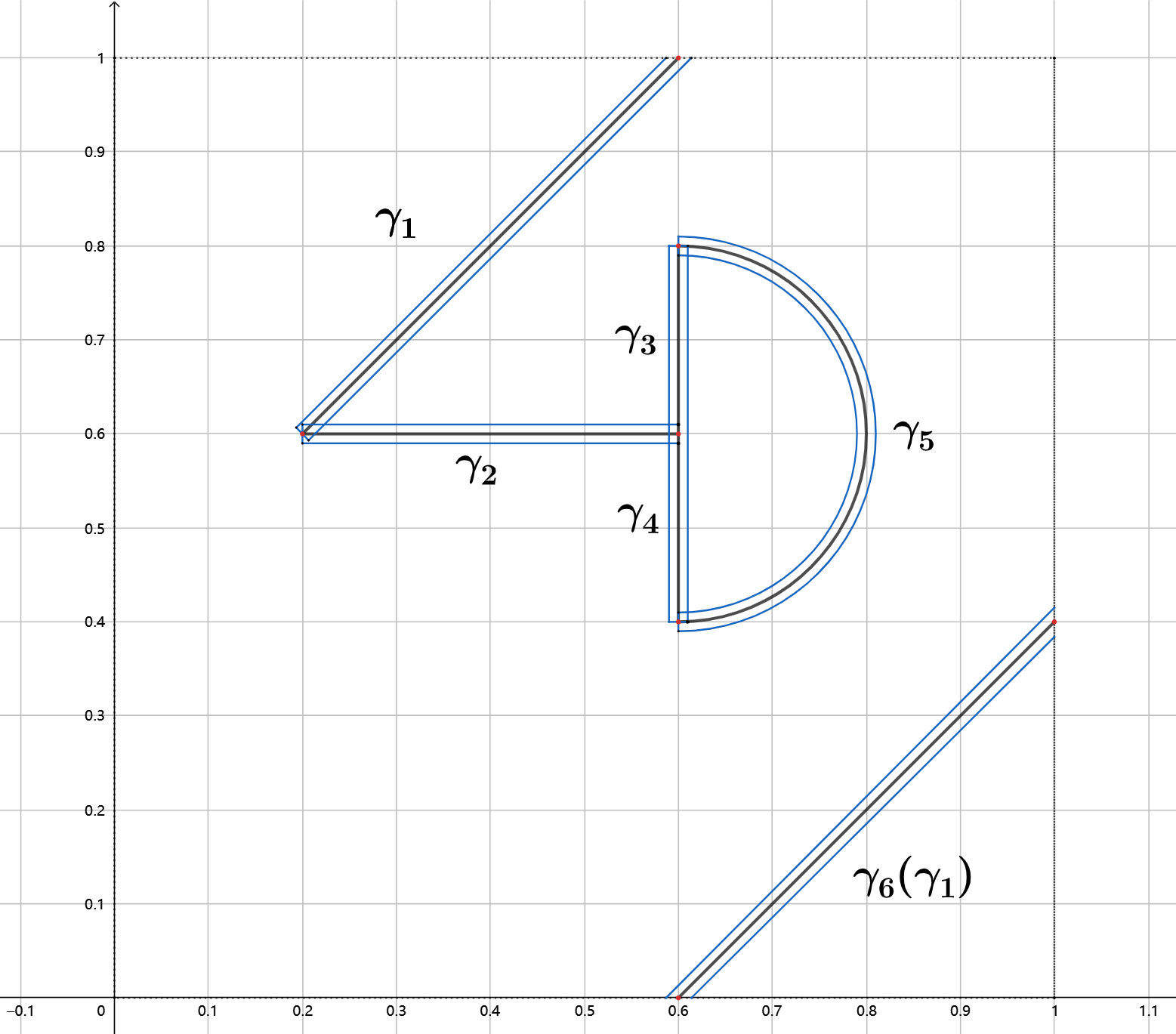}
         \caption{Viewed in one cell.}
         \label{a}
     \end{subfigure}
     \hspace{1.2in}
     \begin{subfigure}{0.3\textwidth}
         \includegraphics[width=1.6\textwidth]{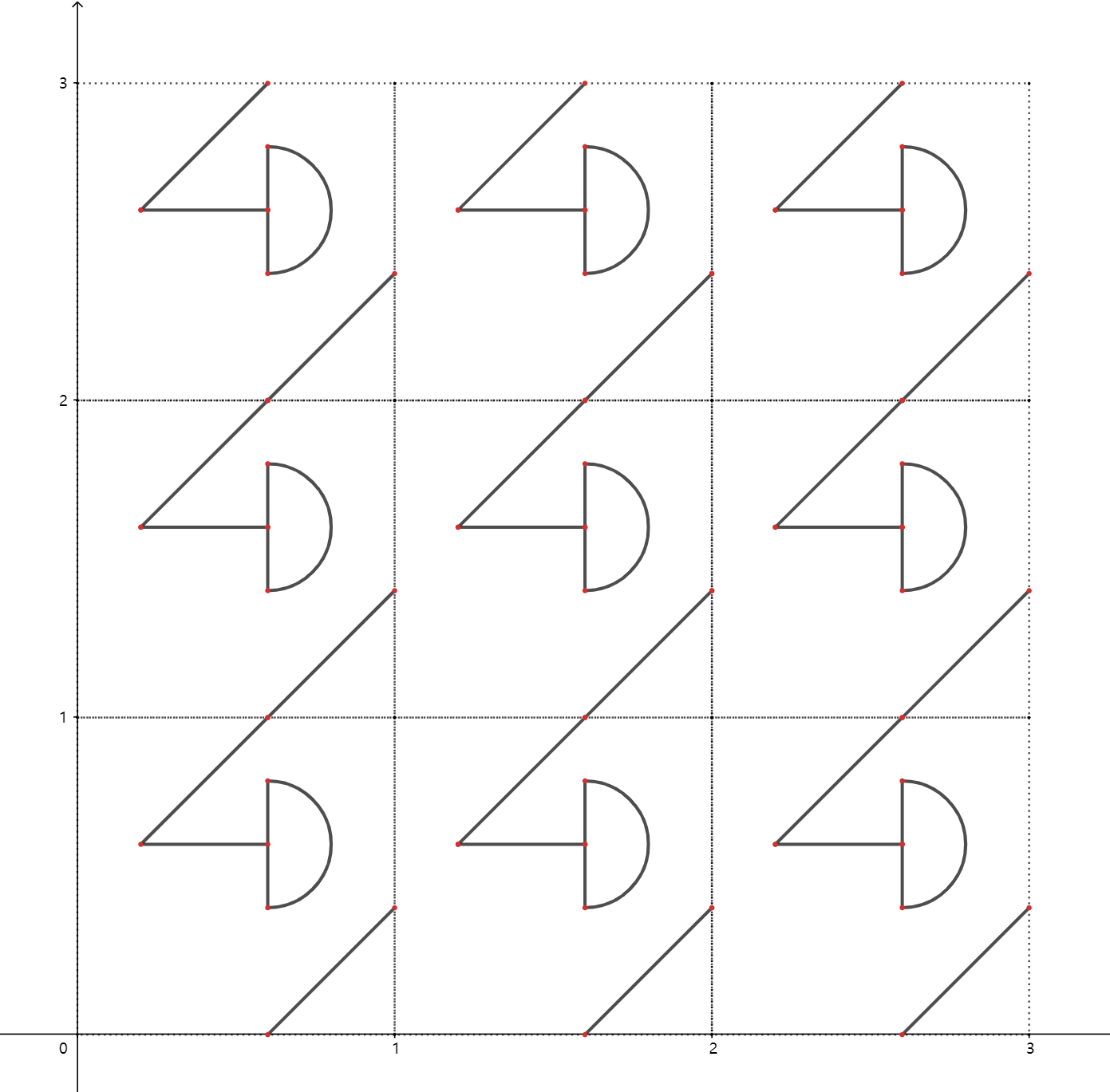}
         \caption{Viewed in a nine-palace.}
         \label{b}
     \end{subfigure}
     
        \caption{This is a qualitative illustration of a typical pattern extended periodically in \(\R^2\); the blue curves enclose \(R_\delta^{\gamma_i}\) for \(1\le i\le 5\) (``\(\gamma_6\)" and \(\gamma_1\) are regarded as the one single curve); even though \(\gamma_3\) and \(\gamma_4\) form a straight line, the intersection of which and \(\gamma_2\) forces us to consider the two arcs separately; each \(\gamma_i\) splits \(R_\delta^{\gamma_i}\) into two (curved) quadrangles, and \(R_{\delta,+}^\gamma\) can be chosen to be any of the two.}
        \label{1typicalexample}
\end{figure}

Physically, these \(\delta\)-roads \(R_\delta^+\) represent thin wires that possess a higher diffusion rate relative to the materials elsewhere, and the solution \(u^{\delta,\epsilon}\) to the full model (\ref{origin}) could describe the effects of the roads in a sufficient way. However, there are many obstacles in directly considering the full equation either analytically or computationally. In PDE theory, the solutions don't have much regularity because of the discontinuity of diffusion rates, and in computational PDE theory, obtaining the solution requires huge computational resources due to the small scales involved in the equation.

To avoid these issues, we exploit the periodic homogenization and EBC theories, and discuss the effective model after sending \(\delta,\epsilon\rta0\). One may immediately observe two ways that possibly lead to two models. The first model is the relatively traditional one: sending \(\epsilon\rta0\) and then \(\delta\rta0\). By periodic homogenization, the first step gives us a homogenized model with constant diffusion tensor \(\Sigma_\delta\) dependent on \(\delta\), and so this method mainly focuses on the limit of \(\Sigma_\delta\) as \(\delta\rta0\).

The second way is to reverse the above process: sending \(\delta\rta0\) first and then \(\epsilon\rta0\). According to Li and Wang's theory \citep{Li-2017,Li2020}, when sending \(\delta\rta0\), we know that at scale \(\epsilon>0\), the road width should be \(\epsilon\delta\), and if \(\sigma\delta\rta a>0\), then there will be a Road EBC on each copy of \(\epsilon\gamma\) for each \(\gamma\in\mathcal{G}\),
\[
a\epsilon u_{ss}^{0,\epsilon}=\frac{\partial (u^{0,\epsilon})^{-}}{\partial \vec{n}}-\frac{\partial (u^{0,\epsilon})^{+}}{\partial \vec{n}},
\]
in which the notations will be discussed in the ensuing theorem (``\(a\epsilon\)" represents the \emph{Road effective diffusivity}, ``\(\delta\epsilon\)" the \emph{road width} and ``\(\epsilon\)" the \emph{scale}). At this stage, sending \(\epsilon\rta0\) means to do homogenization of interior boundary conditions. It is worth mentioning that in this article, we find the EBCs on a collection of \(C^2\) curves including nodes instead of disjoint \(C^2\) closed curves as presented in \citep{Li2020}. 
Certainly, the arising conditions on the nodes in this case will be carefully discussed in section 2.4. The following theorems  are our main results.
\begin{thm}
The two intermediate effective models are actually the same. More precisely, on the one hand, by sending \(\epsilon\rta0\) first, we obtain a solution \(u^{\delta,0}\) to the classical homogenized problem with index \(\delta\). On the other hand, by sending \(\delta\rta0\) first, we obtain \(u^{0,\epsilon}\), which is isotropic but satisfies Road pattern conditions with road diffusivity \(a\epsilon\). If one sends \(\delta,\epsilon\rta0\) (for some subsequence) respectively, then both two sequences converge weakly to \(u^{0,0}\) in \(H_0^1(\co)\) as \(\epsilon,\delta\rta0\). Moreover, \(u^{0,0}\) satisfies the effective equation
\be
\begin{cases}
-\gd\cdot(\Sigma_0\gd u^{0,0})(\x)=f(\x),&\x\in\co,\\
u^{0,0}(\x)=0,&\x\in\po,
\end{cases}
\ee
where for \(l,k=1,2\),
\be
\lb\Sigma_0\rb_{kl}=\delta_{kl}+a\int_{\Gamma_1} (\bar{w}_k)_s(x_l)_s ds,
\ee
and \(v=\bar{w}_k-x_k\) satisfies the following conditions in the weak sense
\be
\bca
-\Delta v(\x)=0,&\x\in\msquare\backslash\Gamma_1,\\
v(\x)+x_k \text{ satisfies Road Pattern Conditions with road diffusivity }a,&\x\in\Gamma_1,\\
v\text{ is 1-periodic in }\R^2,\text{ and }\int_{\msquare} v=0,&
\eca\label{effectivecorrector1}
\ee
where a proper function \(u\) satisfies \emph{Road Pattern Condition} with road diffusivity \(a\) if
\begin{itemize}
    \item[1.] For each arc \(\gamma\in\mathcal{G}\), if \(s\) represents a unit-speed reparametrization and \(\vec{n}\) a unit normal field on \(\gamma\), then the domain is locally split into two by the arc, and we may call the one directed by \(\vec{n}\) positive, and another negative. On this arc, \(u\) satisfies \(u^+=u^-\) and \(a u_{ss}= \frac{\partial u^-}{\partial\vec{n}}-\frac{\partial u^+}{\partial\vec{n}}\) (if \(u=v+x_k\), we have \(a v_{ss}+a (x_k)_{ss}= \frac{\partial v^-}{\partial\vec{n}}-\frac{\partial v^+}{\partial\vec{n}}\), while if \(u=u^{0,\epsilon}\), the Road effective diffusivity \(a\) should be replaced by \(a\epsilon\));
    \item[2.] \(\restr{u}{\gamma}(V)=0\) for all \(\gamma\) that intersects the outer boundary at \(V\in\po\) (this condition corresponds to the case that \(u=u^{0,\epsilon}\) and should be removed for the case \(u=v+x_k\));
    \item[3.] If \(V\) is contained in \(\co\), and \(\gamma,\gamma'\in\mathcal{G}\) are two arcs that join at \(V\), then \(\restr{u}{\gamma}(V)=\restr{u}{\gamma'}(V)\) (if \(u=v+x_k\), we have \(\restr{v}{\gamma}(V)=\restr{v}{\gamma'}(V)\));
    \item[4.] For \(V\in\co\), if \(\mathcal{G}_V\) is the collection of arcs that join at it, we have
    \[
    \sum_{\gamma\in\mathcal{G}_V} u_{s_\gamma}(V)=0,
    \]
    where \(s_\gamma\) is the arc-length parametrization of \(\gamma\) starting from \(V\) (if \(u=v+x_k\), then we have \(\sum_{\gamma\in\mathcal{G}_V} (v+x_k)_{s_\gamma}(V)=0\)).
\end{itemize}\label{1.1}
\end{thm}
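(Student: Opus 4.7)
The plan is to combine the classical energy/oscillating-test-function method of Tartar with Li and Wang's derivation of the Road EBC, applied at two different scales: once at the microscopic cell to derive the effective corrector, and once at the macroscopic level to close the commutation diagram. As a first step, testing (\ref{origin}) against $u^{\delta,\epsilon}$ itself and using $\bar{\sigma}\ge 1$ together with the scaling $\sigma\delta\rta a$ (so that the total mass of the road contribution $\sigma\cdot|R_\epsilon^+\cap\co|$ stays bounded) gives a uniform $H_0^1(\co)$ estimate. Hence there exists a subsequence with $u^{\delta,\epsilon}\weakcv u^{0,0}$ weakly in $H_0^1(\co)$, and it remains to identify $u^{0,0}$ as a solution of the effective equation.

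Second, I would treat the two iterated limits separately. Fixing $\delta$ and sending $\epsilon\rta 0$ is classical periodic homogenization with coefficient $\bar{\sigma}(\cdot)$; the effective tensor $\Sigma_\delta$ is given via the standard cell problem $-\gd\cdot(\bar{\sigma}\gd w_k^\delta)=0$ on $\msquare$ with $w_k^\delta-x_k$ periodic and mean zero, and $(\Sigma_\delta)_{kl}=\int_\msquare\bar{\sigma}\gd w_k^\delta\cdot \mathbf{e}_l$. Fixing $\epsilon$ and sending $\delta\rta 0$ is exactly the setting of Li and Wang, extended in Section 2.4 to patterns containing junctions: $u^{\delta,\epsilon}\rta u^{0,\epsilon}$, which solves the Laplace equation off the road skeleton together with the Road Pattern Condition with diffusivity $a\epsilon$. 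The key is then to apply the same Li--Wang derivation to the cell problem itself: as $\delta\rta 0$, $w_k^\delta\rta \bar{w}_k$ strongly in $H^1(\msquare)$, where $\bar{w}_k$ satisfies exactly (\ref{effectivecorrector1}). Items 1--2 of the Road Pattern Condition come from the standard Li--Wang transmission analysis on a single smooth arc; items 3--4 emerge from a local flux-balance argument near each node, letting $\delta\rta 0$ in the divergence theorem applied on a small ball centered at the junction and using that the concentrated tangential flux $\sigma\delta u_s\rta a u_s$ along each incident arc.

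Third, for the joint limit, I would identify $\Sigma_0$ by the corrector formula. Starting from $(\Sigma_0)_{kl}=\lim_{\delta\rta 0}(\Sigma_\delta)_{kl}$ and decomposing $\bar{\sigma}=1+(\bar{\sigma}-1)\mathbf{1}_{R_\delta^+}$, the bulk part contributes $\int_\msquare\gd\bar{w}_k\cdot\mathbf{e}_l=\delta_{kl}$ (since $\bar w_k-x_k$ is periodic with zero mean), while the road part contributes $\int_{R_\delta^+}\sigma\gd w_k^\delta\cdot\mathbf{e}_l\rta a\int_{\Gamma_1}(\bar{w}_k)_s(x_l)_s\,ds$ after localizing around each $\gamma$, rescaling the transverse variable, and using that in the road $w_k^\delta$ becomes essentially affine transversally. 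To verify that $u^{0,0}$ satisfies the limiting equation, I use the oscillating test function $\phi_k^\epsilon(\x)=\epsilon\bar{w}_k(\x/\epsilon)\psi(\x)$ for $\psi\in C_c^\infty(\co)$ in the weak form of (\ref{origin}), invoking a Murat--Tartar div-curl compensation (adapted to the singular coefficient $\bar{\sigma}$) to identify $\bar{\sigma}(\cdot/\epsilon)\gd u^{\delta,\epsilon}\weakcv \Sigma_0\gd u^{0,0}$ in the distributional sense.

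The main obstacle is twofold. The first, and expected to be the hardest technically, is the rigorous passage $w_k^\delta\rta \bar{w}_k$ near the nodes of the pattern: Li and Wang's original EBC theory assumes disjoint $C^2$ closed curves, and at a junction neither the curvilinear chart $(s,\tau)$ nor the transverse flux identity $\sigma\delta u_s\sim au_s$ is unambiguous, so the derivation of items 3--4 of the Road Pattern Condition requires new local estimates (and likely a separate analysis of a neighborhood shrinking like a power of $\delta$ around each node, matched to the one-dimensional EBC on each incident arc). The second is ensuring the joint limit is independent of the relative rate of $\delta,\epsilon\rta 0$; this I would handle by proving strong-in-$H^1$ convergence $w_k^\delta\rta \bar{w}_k$ so that the oscillating test function $\epsilon\bar{w}_k(\x/\epsilon)\psi$ can be used uniformly, decoupling the two parameters in the passage to the limit.
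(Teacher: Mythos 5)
Your outline gets the broad architecture right (energy estimate, cell correctors $w_k^\delta$, Li--Wang EBC on the cell problem, oscillating test functions), but it diverges from the paper's proof in ways that leave real gaps.

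First, the paper never proves the joint limit $u^{\delta,\epsilon}\rightharpoonup u^{0,0}$. It proves two iterated limits separately: Method I (Section 3.1) sends $\epsilon\rightarrow 0$ first and then analyzes $\Sigma_\delta\rightarrow\Sigma_0$; Method II (Section 3.2) sends $\delta\rightarrow 0$ first, obtaining the EBC problem for $u^{0,\epsilon}$, and then homogenizes the interior boundary conditions by a Tartar energy method. Your plan to apply a div--curl lemma directly to the $(\delta,\epsilon)$-problem with the singular coefficient $\bar{\sigma}(\cdot/\epsilon)=O(1/\delta)$ is not worked out and is genuinely problematic: the usual div--curl compensation requires $L^2$-bounded flux fields, but $\bar{\sigma}\nabla u^{\delta,\epsilon}$ is $O(\sigma^{1/2})$ in $L^2$ on the roads, and the limiting bilinear form acquires a lower-dimensional curve integral which does not emerge from a naive div--curl pairing. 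This step needs the paper's curve-averaging machinery, not div--curl.

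Second, and this is the technical heart of Method II that your proposal skips entirely: passing to the limit in the term
\[
\frac{a}{N}\sum_{i,j}\int_{\Gamma_\epsilon^{i,j}}(w_k^\epsilon)_s\,\psi_s\,u\,ds
\]
requires showing that oscillatory averages of $H^1$ traces over the rescaled curves $\Gamma_\epsilon^{i,j}$ converge to a product of a curve integral and a bulk integral. The paper devotes Lemmas 3.3--3.5 and their corollaries to exactly this, using the geometric comparison lemmas of Section 2.1 (translation, rotation, rescaling of nearly-straight arc segments). Nothing in your proposal addresses this; a div--curl argument does not produce these surface averages.

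Third, you assert strong $H^1(\msquare)$ convergence of the cell correctors $w_k^\delta\rightarrow\bar w_k$. The paper establishes only weak $H^1_{per}$ convergence of $w_k^\delta$ plus local second-derivative estimates (Lemma 2.6) and transmission conditions (2.14), which together suffice to pass to the limit in $\Sigma_\delta$. Strong $H^1$ convergence is neither proved nor used, and asserting it without proof (to ``decouple the parameters'') leaves a gap. Similarly, your derivation of $(\Sigma_0)_{kl}$ decomposes $\bar\sigma$ but writes the bulk term with the limit $\nabla\bar w_k$, which presupposes convergence already established.

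Finally, on the node conditions (items 3--4 of the Road Pattern Condition): your proposal of a local flux balance on shrinking balls around each junction is a genuinely different route from the paper's, which instead proves the integral identity (2.16) on the whole domain by a truncation-of-roads method and then recovers the pointwise node conditions from the density theorem (Theorem 2.1), itself proved by a delicate partition-of-unity and cut-off argument around nodes. Your local approach is plausible in spirit, but as you yourself flag, making the curvilinear chart and the concentrated tangential flux rigorous near a junction is nontrivial, and you give no indication of how the matching of one-dimensional EBCs on incident arcs would be carried out. The paper's density-based argument is precisely designed to avoid this pointwise matching by working in a weak formulation throughout.

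In short: the two-iterated-limits structure, the averaging lemmas over curves (which replace div--curl), the truncation-plus-density treatment of nodes, and the weak-convergence-only handling of cell correctors are the load-bearing elements of the paper's proof, and your proposal either replaces them with unjustified alternatives or omits them.
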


To evaluate the efficiency of a pattern \(\mathcal{G}\), we consider the trace of the effective diffusion tensor \(\Sigma_0\), and obtain the following result.
\begin{thm}
Among all regular (see Definition (\ref{regularpattern})) 1-periodic patterns \(\mathcal{G}\) with total length \(l>0\) of \(\Gamma_1\) in \(\msquare\), we have \(tr(\Sigma_0)\le 2+a l\), and equality holds if and only if all arcs of \(\mathcal{G}\) satisfy the following conditions
\begin{itemize}
    \item[1.] Every arc is straight;
    \item[2.] There are at least two different arcs joining at each node;
    \item[3.] \textbf{(Balance Condition)} Let \(V\) be a node and \(\gamma_1,\cdots,\gamma_m\in\mathcal{G}\) be distinct linear segments that join at \(V\). Denoting by \(V_i\) another end point of \(\gamma_i,\,i=1,\cdots,m\), we have
    \be
    \sum_{i=1}^m \frac{V-V_i}{|V-V_i|}=\mathbf{0}.
    \ee
\end{itemize}
\label{1.2}
\end{thm}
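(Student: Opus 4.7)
The plan is to reduce the inequality to an energy identity for the cell problem (\ref{effectivecorrector1}). First I would write
\[
\operatorname{tr}(\Sigma_0) \;=\; 2 + a\sum_{k=1}^{2} \int_{\Gamma_1}(\bar{w}_k)_s (x_k)_s\,ds,
\]
set $v_k := \bar{w}_k - x_k$, and use the unit-speed identity $\sum_k (x_k)_s^2 \equiv 1$ along every arc to rewrite
\[
a\sum_k \int_{\Gamma_1}(\bar{w}_k)_s(x_k)_s\,ds \;=\; al + a\sum_k \int_{\Gamma_1}(v_k)_s(x_k)_s\,ds.
\]
Hence the inequality $\operatorname{tr}(\Sigma_0) \le 2+al$ is equivalent to $\sum_k \int_{\Gamma_1}(v_k)_s(x_k)_s\,ds \le 0$.

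Next I would extract the weak formulation of (\ref{effectivecorrector1}). For a smooth periodic test function $\phi$, Green's identity applied on the open components of $\msquare \setminus \Gamma_1$, combined with $-\Delta v_k = 0$ and the road condition $a(v_k)_{ss} + a(x_k)_{ss} = \partial_{\vec{n}} v_k^- - \partial_{\vec{n}} v_k^+$, gives
\[
\int_{\msquare} \nabla v_k \cdot \nabla \phi\,d\x \;=\; \int_{\Gamma_1} \phi\bigl(a(v_k)_{ss} + a(x_k)_{ss}\bigr)\,ds.
\]
Integrating by parts along each arc and summing, the endpoint contributions at an interior node $V$ collapse to $-a\phi(V)\sum_{\gamma \in \mathcal{G}_V}(v_k+x_k)_{s_\gamma}(V)$, which vanishes by node condition 4 applied to $\bar w_k$. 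This yields the clean identity
\[
\int_{\msquare}\nabla v_k \cdot \nabla \phi\,d\x + a\int_{\Gamma_1}(v_k)_s\phi_s\,ds \;=\; -a\int_{\Gamma_1}(x_k)_s\phi_s\,ds,
\]
and choosing $\phi = v_k$ produces the energy identity
\[
\int_{\msquare}|\nabla v_k|^2\,d\x + a\int_{\Gamma_1}(v_k)_s^2\,ds \;=\; -a\int_{\Gamma_1}(x_k)_s(v_k)_s\,ds \;\ge\; 0,
\]
so $a\int_{\Gamma_1}(x_k)_s(v_k)_s\,ds \le 0$ for each $k$, proving the inequality.

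For the equality case, equality forces the left-hand side above to vanish for $k=1,2$. Continuity of $v_k$ across $\Gamma_1$ (pattern condition 1) together with the fact that $\Gamma_1$ has zero Lebesgue measure means $v_k \in H^1(\mathbb{T}^2)$ with $\nabla v_k \equiv 0$, hence $v_k$ is constant on the (connected) torus, and the zero-mean constraint yields $v_k \equiv 0$. Then $\bar{w}_k = x_k$ must itself satisfy the road pattern conditions. The road condition reduces to $(x_k)_{ss} = 0$ on every arc, i.e.\ the curvature vector $\kappa \vec{n}$ vanishes identically, forcing each arc to be straight (condition 1). The node condition 4 becomes $\sum_{\gamma \in \mathcal{G}_V}\vec{t}_\gamma(V) = \mathbf{0}$, where $\vec{t}_\gamma(V)$ is the unit tangent of $\gamma$ at $V$ pointing away from $V$; for the straight segment from $V$ to its other endpoint $V_i$ this tangent is $(V_i - V)/|V_i - V|$, so negating gives the balance condition of condition 3. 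Condition 2 (at least two arcs per node) is then necessary for this sum of unit vectors to vanish. Conversely, if conditions 1--3 hold, one verifies that $v_k \equiv 0$ solves (\ref{effectivecorrector1}), so equality is attained.

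The main obstacle I foresee is the careful bookkeeping in the tangential integration by parts on $\Gamma_1$: tracking orientations, summing endpoint terms at every node with consistent signs, and invoking continuity of $\phi$ at nodes to collapse these boundary terms into node condition 4 for $\bar{w}_k$. The other delicate point is the rigidity step in the equality analysis — that the only $H^1$ function on $\mathbb{T}^2$ with $\nabla v_k = 0$ a.e.\ on $\msquare \setminus \Gamma_1$ and $(v_k)_s = 0$ on $\Gamma_1$ is constant — which genuinely uses the continuity of $v_k$ across the singular set $\Gamma_1$ to pass from piecewise constant to globally constant.
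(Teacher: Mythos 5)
Your proposal is correct and follows essentially the same route as the paper: both pass through the energy decomposition $\operatorname{tr}(\Sigma_0)=2+al-\sum_{k}\bigl(\int_{\msquare}|\nabla\hat w_k|^2 + a\int_{\Gamma_1}(\hat w_k)_s^2\bigr)$ (the paper's identity (\ref{2jiaal}), with $v_k=\hat w_k$), deduce that equality forces $\hat w_k\equiv 0$ via the zero-mean normalization, and then read off the three conditions as exactly the statement that $v\equiv 0$ satisfies the Road Pattern Conditions. The only difference is presentational: you re-derive the weak form (\ref{effectivecorrectorintegalequation}) from the strong PDE, whereas the paper already has it in hand; the ``obstacles'' you flag are handled in the paper precisely by the node condition 4 and the density result, just as you anticipate.
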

\noindent We call a 1-periodic pattern \textbf{balanced} if it satisfies the above conditions, and it is worth mentioning that this definition is beyond the periodicity restriction. The above rules give clues to find optimal patterns regarding the total effects of the enhancing thin layers. Since finding all such patterns will be a distinct subject, in this article, we will only exhibit several cases that best illustrate these phenomena.

This article is composed mainly of three parts. In section 2, we give several simple inequalities, a density result, and derivation and description of EBCs on patterns. In section 3, we present the proof of Theorem (\ref{1.1}) from two perspectives. In section 4, we first prove Theorem (\ref{1.2}) and list several balanced patterns, and then we give an estimate of \(tr(\Sigma_0)\) as \(a\rta\infty\).

\section{Preliminary}
\subsection{Several Useful Facts}

\noindent Suppose \(u\) is a \(C^1\) function on some subdomain of \(\R^2\), then we have the following facts. Moreover, by density arguments, these facts can be applied to \(H^1\) functions.

\begin{lem}
Let \(\xi\) be the linear segment \((0,l)\times\{0\}\subset\R^2\), with \(l>0\), and \(\z=(z_1,z_2)\in\R^2\) a vector such that \(z_2>0\). Denoting by \(A\) the parallelogram spanned by \(\z\) and \((l,0)\), then we have
\begin{itemize}
    \item 
    \[
    \lw\int_{\xi+\z} u dx -\int_{\xi} u dx\rw\le \frac{|\z|}{z_2} \int_{A} |\gd u| dxdy ;
    \]
    \item \[\lw\int_{\xi} u dx \rw \le \frac{1}{z_2} \int_{A} |u| dxdy  + \frac{|\z|}{z_2} \int_{A} |\gd u| dxdy .\]
\end{itemize}\label{translationerror}
\end{lem}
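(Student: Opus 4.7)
The plan is to prove both inequalities first for $C^1$ functions via the fundamental theorem of calculus along the direction $\z$, then reduce to the integral over the parallelogram $A$ by a change of variables whose Jacobian determinant is precisely $z_2$. Extending to $H^1$ follows from the stated density argument.

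For the first inequality, I would fix $x\in(0,l)$ and parametrize the segment from $(x,0)\in\xi$ to $(x,0)+\z\in\xi+\z$ by $t\mapsto(x,0)+t\z$, $t\in(0,1)$. Then
\[
u((x,0)+\z)-u(x,0)=\int_0^1 \gd u((x,0)+t\z)\cdot\z\,dt,
\]
and integrating over $x\in(0,l)$ gives
\[
\int_{\xi+\z} u\,dx-\int_\xi u\,dx=\int_0^l\!\int_0^1 \gd u((x,0)+t\z)\cdot\z\,dt\,dx.
\]
The change of variables $(x,t)\mapsto(y_1,y_2)=(x+tz_1,\,tz_2)$ maps $(0,l)\times(0,1)$ bijectively onto $A$ with Jacobian determinant $z_2$, so $dx\,dt=z_2^{-1}dy_1\,dy_2$. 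Taking absolute values and using $|\gd u\cdot\z|\le|\z||\gd u|$ yields the first estimate.

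For the second inequality, the trick is to average the identity
\[
u(x,0)=u((x,0)+t\z)-\int_0^t \gd u((x,0)+s\z)\cdot\z\,ds
\]
over $t\in(0,1)$ before integrating in $x$. The average of the first term on the right, after the same change of variables, becomes $z_2^{-1}\int_A u\,dy_1\,dy_2$, while the double integral of the remainder is bounded (enlarging $\int_0^t$ to $\int_0^1$) by $|\z|\int_0^l\!\int_0^1|\gd u((x,0)+s\z)|\,ds\,dx=\frac{|\z|}{z_2}\int_A|\gd u|\,dy_1\,dy_2$. Combining gives the second estimate.

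Neither step presents a real obstacle; the only point that needs a little care is the change of variables, which is why the factor $1/z_2$ appears (it is the reciprocal of the ``height'' of $A$ in the direction transverse to $\xi$) and why the hypothesis $z_2>0$ is essential so that $A$ genuinely has positive area and the map $(x,t)\mapsto(x+tz_1,tz_2)$ is invertible onto $A$. The $H^1$ extension is standard: pick $u_n\in C^1$ with $u_n\to u$ in $H^1$ of a neighbourhood of $\overline A$, apply both inequalities to $u_n$, and pass to the limit using the continuity of the trace on the segments $\xi$ and $\xi+\z$.
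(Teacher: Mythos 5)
Your proof is correct and follows essentially the same route as the paper: the first inequality via the fundamental theorem of calculus along $\z$ followed by a change of variables with Jacobian $z_2$, and the second by averaging over the scaling parameter $t\in(0,1)$ (the paper phrases this as applying the first inequality with $\z$ replaced by $s\z$ and then integrating in $s$, which is the same averaging argument).
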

\begin{proof}
For \(0<x<l\), we have by Fundamental Theorem of Calculus
\[
\begin{split}
    u(x+z_1,z_2)-u(x,0)&= \int_0^1 \frac{d}{ds}\lmb u(x+sz_1,sz_2)\rmb ds\\
                      &= \int_0^1 (\gd u\cdot \z)(x+sz_1,sz_2) ds.
\end{split}
\]
Thus, we have
\[
\begin{split}
   \int_0^l \lw u(x+z_1,z_2)-u(x,0) \rw dx &\le |\z|  \int_0^l \int_0^1 |\gd u| (x+sz_1,sz_2) ds dx\\
                                           &= \frac{|\z|}{z_2}\int_0^{z_2}\int_{y\frac{z_1}{z_2}}^{l+y\frac{z_1}{z_2}} |\gd u|(x,y) dxdy\\
                                                         &=\frac{|\z|}{z_2} \int_{A} |\gd u| dxdy.
\end{split}
\]
As for the second inequality, we set \(\z\) to be \(s\z\) for some \(s\in(0,1]\), and thus
\[
   \lw\int_{\xi} u dx \rw\le  \int_{\xi+s\z} |u| dx + \frac{|s\z|}{sz_2}\int_{A} |\gd u| dxdy.
\]
Integrating both sides w.r.t. \(s\) from \(0\) to \(1\), we obtain the result.
\end{proof}

\begin{lem}
Let \(u(r,\theta)\coloneqq u(r \cos{\theta},r\sin{\theta})\). Then for fixed \(\theta_0\in(0,\pi]\) and \(l>0\), we denote by \(B\) the sector \(\{0<r<l,0<\theta<\theta_0\}\), and we have
\[
\lw\int_0^l u(r,\theta_0) dr-\int_0^l u(r,0) dr\rw\le \int_B |\gd u| dxdy.
\]\label{rotationfact}
\end{lem}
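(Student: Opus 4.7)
The plan is to mirror the proof of Lemma \ref{translationerror}, but to parametrize the transport from one radial segment to the other by the angular variable $\theta$ rather than by a linear translation. The key observation is that in polar coordinates the area element $r\, dr\, d\theta$ is precisely what will convert the naive pointwise bound $|u_\theta| \le r|\nabla u|$ into an integral of $|\nabla u|$ over the sector $B$ with no extra Jacobian factor.

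Concretely, for each fixed $r\in(0,l)$ I would apply the fundamental theorem of calculus in $\theta$ to write
\[
u(r,\theta_0)-u(r,0)=\int_0^{\theta_0}\frac{\partial u}{\partial\theta}(r,\theta)\,d\theta,
\]
and then use the chain rule
\[
\frac{\partial u}{\partial\theta}(r,\theta)=-r\sin\theta\,u_x(r\cos\theta,r\sin\theta)+r\cos\theta\,u_y(r\cos\theta,r\sin\theta),
\]
which gives the pointwise estimate $|u_\theta(r,\theta)|\le r|\nabla u|(r\cos\theta,r\sin\theta)$. Integrating $r$ from $0$ to $l$, pulling the absolute value inside both integrals, and swapping the order of integration yields
\[
\left|\int_0^l u(r,\theta_0)\,dr-\int_0^l u(r,0)\,dr\right|\le \int_0^l\!\!\int_0^{\theta_0}\! r\,|\nabla u|(r\cos\theta,r\sin\theta)\,d\theta\,dr.
\]
The right-hand side is exactly $\int_B|\nabla u|\,dx\,dy$ by the polar change-of-variables formula, which finishes the argument.

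There is essentially no obstacle: the entire proof is one application of the fundamental theorem of calculus plus a change of variables, and the cleanness of the bound (no $r$-dependent prefactor as in Lemma \ref{translationerror}) comes precisely from the $r$ produced by the chain rule cancelling against the $1/r$ one might otherwise fear from polar coordinates. The only care needed is at the origin, where polar coordinates degenerate; but since the integrand $r|\nabla u|$ vanishes linearly in $r$ there and $u$ is assumed $C^1$, this causes no issue, and the usual density argument (approximating $H^1$ functions by $C^1$ functions and using continuity of both sides in the $H^1$ norm) extends the inequality to $H^1$ functions, as indicated in the preamble of the subsection.
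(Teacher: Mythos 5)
Your proof is correct and matches the paper's argument essentially line for line: both apply the fundamental theorem of calculus in $\theta$ and then use the pointwise bound $|u_\theta|\le r|\nabla u|$ (which the paper writes equivalently as $|u_\theta|/r\le|\nabla u|$) so that the factor of $r$ cancels against the polar area element. No differences worth noting.
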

\begin{proof}
By FTC, we know that
\[
u(r,\theta_0)-u(r,0)=\int_0^{\theta_0} u_\theta(r,\theta) d\theta,
\]
and then
\[
\begin{split}
    \int_0^l |u(r,\theta_0)-u(r,0)|dr&\le \int_0^l \int_0^{\theta_0} \lw u_\theta\rw(r,\theta) d\theta dr\\
                                                    &=  \int_0^l \int_0^{\theta_0} \frac{\lw u_\theta\rw(r,\theta)}{r}r  d\theta dr\\
                                                    &\le \int_B |\gd u| dx dy.
\end{split}
\]
\end{proof}

\begin{lem}
Let \(\xi=(0,l)\times\{0\}\) be defined in prior lemmas, and \(g(x)\) some continuous function on \(\xi\) (here \(\xi\) is considered as part of \(x\)-axis, and the image of \(g\) is the \(y\)-axis.) We then have the following estimate
\[
\int_0^l \lw u(x,g(x))-u(x,0)\rw dx \le \int_{D(g)} |\gd u|(x,y) dxdy,
\]
where \(D(g)\) is the domain enclosed by \(y=0\), \(y=g(x)\), \(x=0\) and \(x=l\).
\label{functiontranslate}
\end{lem}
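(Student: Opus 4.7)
The strategy mirrors Lemmas 2.1 and 2.2: apply the Fundamental Theorem of Calculus along the vertical segment from $(x,0)$ to $(x,g(x))$ for each $x \in (0,l)$, then integrate in $x$. First, for every $x\in(0,l)$ I would write
\[
u(x, g(x)) - u(x, 0) = \int_0^{g(x)} u_y(x, t) dt,
\]
with the convention that the right-hand side reads $-\int_{g(x)}^0 u_y(x,t) dt$ when $g(x)<0$. Taking absolute values and using the componentwise bound $|u_y|\le|\gd u|$ yields
\[
\lw u(x, g(x)) - u(x, 0)\rw \le \int_{\min\{0,g(x)\}}^{\max\{0,g(x)\}} |\gd u|(x,t) dt.
\]

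Next I would integrate this pointwise inequality over $x \in (0,l)$ and invoke Fubini's theorem. By the very definition of $D(g)$, the vertical slice $\{x\}\times(\min\{0,g(x)\},\max\{0,g(x)\})$ is precisely the fibre of $D(g)$ above $x$, so Fubini delivers
\[
\int_0^l \lw u(x, g(x)) - u(x, 0)\rw dx \le \int_{D(g)} |\gd u|(x,y) dx dy,
\]
which is exactly the claimed estimate.

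The only subtlety worth flagging is the potential sign change of $g$ along $\xi$: when $g$ alternates sign, $D(g)$ splits into portions above and below the $x$-axis, but the pointwise bound above already handles both signs simultaneously through the choice of integration limits, so no separate case analysis is required. Measurability of the integrand is guaranteed by the continuity of $g$ together with the $C^1$ regularity of $u$, and, as the authors remark before Lemma 2.1, the estimate extends to $H^1$ functions by a standard density argument. There is no genuinely hard step here; the only place where one needs to be a little careful is confirming that the rectangles swept out by the FTC integrations genuinely exhaust $D(g)$, which follows directly from writing $D(g)$ as an iterated integral in the $t$-then-$x$ order.
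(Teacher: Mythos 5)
The paper states Lemma \ref{functiontranslate} without proof, evidently regarding it as a routine variant of Lemmas \ref{translationerror} and \ref{rotationfact}. Your argument (FTC along vertical fibres, pointwise bound $|u_y|\le|\gd u|$, then Fubini over $D(g)$) is correct, handles the sign change of $g$ cleanly, and is exactly the proof the authors are implicitly relying on.
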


\begin{lem}
Let \(0<\lambda<1,\,l>0,a>0\), then we have
\[
\lw\frac{1}{\lambda}\int_0^{\lambda l} u(x,a) dx -\int_0^{l} u(x,0) dx\rw\le \frac{\sqrt{2+l^2/a^2}}{\lambda^2} \int_{C} |\gd u| dxdy,
\]
where \(C\) is the (closed) trapezoid induced by the four vertices: \((0,0),(l,0),(0,a),(\lambda l,a)\).\label{rescalefact}
\end{lem}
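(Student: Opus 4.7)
\medskip

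\noindent\textbf{Proof proposal.} The plan is to imitate the strategy of Lemma \ref{translationerror}: foliate the trapezoid $C$ by a smooth family of line segments connecting a point on the bottom edge $(0,l)\times\{0\}$ to a point on the top edge $(0,\lambda l)\times\{a\}$, apply the Fundamental Theorem of Calculus along each segment, integrate over the foliation parameter, and then change variables to recognize the integral as the full area integral of $|\nabla u|$ on $C$.

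Concretely, let $L(y)=l(1-(1-\lambda)y/a)$ denote the length of the horizontal slice of $C$ at height $y$, so $L(0)=l$, $L(a)=\lambda l$, and $L(y)\ge\lambda l$ on $[0,a]$. I would introduce the affine ``straightening'' map $\Phi:[0,l]\times[0,a]\to C$ defined by $\Phi(\xi,y)=(\xi L(y)/l,\,y)$, which is a $C^1$ diffeomorphism. For each fixed $\xi\in[0,l]$, the curve $y\mapsto\Phi(\xi,y)$ is a line segment from $(\xi,0)$ to $(\xi\lambda,a)$, and FTC gives
\[
u(\xi\lambda,a)-u(\xi,0)=\int_0^a\Bigl[u_x(\Phi(\xi,y))\,\tfrac{\xi L'(y)}{l}+u_y(\Phi(\xi,y))\Bigr]dy.
\]
Integrating in $\xi$ over $[0,l]$ and applying the substitution $x=\xi\lambda$ on the top edge yields
\[
\frac{1}{\lambda}\int_0^{\lambda l}u(x,a)\,dx-\int_0^l u(x,0)\,dx=\int_0^a\int_0^l\Bigl[u_x(\Phi)\,\tfrac{\xi L'(y)}{l}+u_y(\Phi)\Bigr]d\xi\,dy.
\]

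Next I would change variables in the inner integral via $x=\xi L(y)/l$ at fixed $y$; the range $\xi\in[0,l]$ becomes $x\in[0,L(y)]$, the Jacobian produces a factor $l/L(y)$, and the quantity $\xi L'(y)/l$ becomes $xL'(y)/L(y)$. The right-hand side thus becomes an integral over $C$ of the expression $(l/L(y))\bigl[u_x\cdot xL'(y)/L(y)+u_y\bigr]$. Using $x\le L(y)$ and $|L'(y)|=(1-\lambda)l/a\le l/a$, I bound $|xL'(y)/L(y)|\le l/a$, so by Cauchy--Schwarz the bracketed factor is at most $\sqrt{1+l^2/a^2}\,|\nabla u|\le\sqrt{2+l^2/a^2}\,|\nabla u|$; and $l/L(y)\le 1/\lambda$. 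Assembling these bounds produces the estimate with the factor $1/\lambda$ in front, which is stronger than the claimed $1/\lambda^2$ since $\lambda<1$.

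The only mildly delicate point is bookkeeping the two sources of the $1/\lambda$ loss (the vertical Jacobian $l/L(y)$ and the possibility of a loss from the horizontal derivative term $xL'(y)/L(y)$) and confirming that they do not compound into something worse than $1/\lambda^2$; my derivation shows they in fact contribute just one $1/\lambda$ factor, so the stated constant is not sharp but is easily obtained. As in the previous lemmas, the assumption $u\in C^1$ suffices and a standard density argument extends the inequality to $H^1$.
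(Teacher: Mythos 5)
Your map $\Phi$ is exactly the paper's straightening diffeomorphism $G(\alpha,\beta)=\bigl((1-\tfrac{1-\lambda}{a}\beta)\alpha,\beta\bigr)$, and your argument — apply FTC along the vertical fibers of the rectangle, change variables back to the trapezoid, and absorb the Jacobian and derivative distortion into the constant — is the same strategy the paper carries out via the matrix $H=DG\,DG^T$ and its eigenvalue bounds. The proposal is correct; your explicit Cauchy--Schwarz bookkeeping even yields the slightly sharper constant $\sqrt{1+l^2/a^2}/\lambda$, which the paper gives away by using a loose upper eigenvalue bound on $H$.
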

\begin{proof}
We define
\[
\begin{split}
    G:[0,l]\times[0,a] &\longrightarrow C,\\
       (\alpha,\beta)&\longmapsto \lb\lb1-\frac{1-\lambda}{a}\beta\rb\alpha,\beta\rb.
\end{split}
\]
Then, it can be shown that \(G\) is a nondegenerate diffeomorphism. We also have
\begin{itemize}
    \item \( u(G(\alpha,0))=u(\alpha,0)\), and \(u(G(\alpha,a))=u(\lambda \alpha,a)\);
    \item Let \(\gd_G\) be derivatives w.r.t. \(\alpha\) and \(\beta\), we have
    \[
    |\gd u|^2= \gd_G u \cdot \lb H \gd_G u\rb,
    \]
    where
    \[
    H=DGDG^T=\lb\begin{matrix}
    \lb1-\frac{1-\lambda}{a}\beta\rb^2+\lb\frac{1-\lambda}{a}\alpha\rb^2 & -\frac{1-\lambda}{a}\alpha\\
    -\frac{1-\lambda}{a}\alpha&1
    \end{matrix}\rb.
    \]
    
\end{itemize}
Observe that
    \[
    \begin{split}
        & 1\ge \det(H)=\lb1-\frac{1-\lambda}{a}\beta\rb^2\ge\lambda^2>0\\
        & 2+\frac{l^2}{a^2}(1-\lambda)^2\ge Tr(H)=1+\lb1-\frac{1-\lambda}{a}\beta\rb^2+\lb\frac{1-\lambda}{a}\alpha\rb^2\ge 1+\lambda^2>0.
    \end{split}
    \]
    Thus, we have that the two eigenvalues \(\lambda_1,\lambda_2\) satisfy
    \[
    \frac{\lambda^2}{2+\frac{l^2}{a^2}(1-\lambda)^2}\le \lambda_1,\lambda_2\le  \frac{2+\frac{l^2}{a^2}(1-\lambda)^2}{\lambda^2}.
    \]
    Now, by Lemma (\ref{translationerror}), we have
    \[
    \begin{split}
      \lw\int_0^l u(G(\alpha,a)) d\alpha  - \int_0^l u(G(\alpha,0)) d\alpha \rw&\le  \int_0^l\int_0^a |\gd_G u| d\beta d\alpha \\
      &=  \int_0^l\int_0^a \frac{|\gd_G u|}{\sqrt{\det(H)}} \sqrt{\det(H)}  d\beta d\alpha\\
      &\le  \int_C \frac{\sqrt{2+l^2/a^2}}{\lambda^2} |\gd u| dxdy .
    \end{split}
     \]

\end{proof}

\subsection{A Density Result}

Within some domain \(\co\subset\R^2\), we consider a finite collection \(\mathcal{G}\), called a \emph{pattern}, of \(C^2\) curves (also regular at end points), each two of which intersect at most at their end points. The union of such curves (including end points) will be denoted by \(\Gamma\subset\bar{\co}\). On each element \(\gamma\in\mathcal{G}\), we may consider a Sobolev space \(H^1(\gamma)\) defined by naturally taking a unit speed reparametrization. Meanwhile, any \(u\in H^1(\co)\) by Trace Theorem has an \(L^2(\gamma)\) trace on each \(\gamma\in\mathcal{G}\).
\begin{dfn}
We define
\be
H_\Gamma^{1,0}(\co)\coloneqq\lma u\in H_0^1(\co);\restr{u}{\gamma}\in H^1(\gamma) \text{ for all }\gamma\in\mathcal{G}\rma.
\ee
This function space endowed with the inner product 
\[
\begin{split}
    (u,v)_{H_\Gamma^{1,0}(\co)}&=\int_{\co}\gd u\cdot\gd v + \sum_{\gamma\in\mathcal{G}}\int_{\gamma} u_s v_s\\
                            &\eqqcolon \int_{\co}\gd u\cdot\gd v + \int_{\Gamma} u_s v_s
\end{split}
\]
becomes a Hilbert space, where for each \(\gamma\), \(s(=s_\gamma)\) is a unit speed reparametrization.
\end{dfn}

It is clear that \(C_0^\infty(\co)\subset H_\Gamma^{1,0}(\co)\), but the question is whether it is dense in this new space. Before that, we need to make several assumptions on the pattern. Notice that there are in general four cases in viewing the topological components in \(\Gamma\): regular closed curves, arcs without intersecting others, closed curves with a conic point and graphs composed of arcs and intersection points. We will call the 
conic/intersection/end points arising in all these cases ``nodes".

\begin{dfn}
A pattern is called \textbf{regular} if 
\begin{itemize}
    \item near each node of the pattern, the arcs that approach this node are mutually nontangential;
    \item any arc that intersects the outer boundary \(\po\) will approach it nontangentially.
\end{itemize}\label{regularpattern}
\end{dfn}

\begin{thm}{\emph{(\textbf{Density Result})}}
If the pattern \(\mathcal{G}\) is regular, then the subspace \(C_0^\infty(\co)\) is indeed dense in \(H_\Gamma^{1,0}(\co)\).\label{densityresult}
\end{thm}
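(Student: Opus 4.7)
The plan is to prove density by the standard localize--straighten--mollify strategy, with separate treatment of the bulk, of thin tubes around arcs (away from nodes), and of small balls around each node. First, I reduce to functions compactly supported in $\co$: given $u\in H_\Gamma^{1,0}(\co)$ and a smooth cutoff $\eta_\delta$ equal to $1$ on $\{\mathrm{dist}(\cdot,\po)>2\delta\}$ and $0$ on $\{\mathrm{dist}(\cdot,\po)<\delta\}$ with $|\nabla \eta_\delta|\le C/\delta$, Hardy's inequality in $\co$ gives $\eta_\delta u\to u$ in $H_0^1(\co)$, and a one-dimensional Hardy inequality on each arc $\gamma$ that meets $\po$ (at a point $V$, nontangentially by regularity of $\mathcal{G}$) yields convergence in $H^1(\gamma)$. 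The needed fact $u|_\gamma(V)=0$ uses $H^1(\gamma)\hookrightarrow C^0(\gamma)$ together with the $H_0^1$-vanishing trace of $u$ on $\po$.

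Next, fix a smooth partition of unity $\{\chi_k\}$ on $\overline\co$ subordinate to a cover by three types of sets: (a) open sets disjoint from $\Gamma$; (b) thin tubular neighborhoods of a single arc of $\mathcal{G}$ containing no node; (c) small balls $B_V$ around each node $V$. In case (a) ordinary mollification approximates $\chi_k u$. In case (b), use the $C^1$ curvilinear chart recalled in the introduction to straighten $\gamma$ to $\{\tau=0\}$ in a rectangle, and decompose $\chi_k u=v+w$ with $v(s,\tau):=(\chi_k u)(s,0)\,\eta(\tau)$ for a bump $\eta\in C_c^\infty$ with $\eta(0)=1$. Then $w$ has zero trace on $\gamma$: approximate $v$ by tensoring a one-dimensional $H^1$-mollification of $(\chi_k u)(\cdot,0)$ with $\eta$, and $w$ by $C_c^\infty$ functions in each of the two half-rectangles separately.

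The main obstacle is case (c). At a node $V$ where arcs $\gamma_1,\dots,\gamma_m$ meet nontangentially in directions $\theta_1,\dots,\theta_m$, all one-dimensional traces $u|_{\gamma_i}$ are continuous and agree at $V$ with a common value $c$ (trace consistency across transverse curves through $V$, using the $1$-capacity of each $\gamma_i$). Writing $\chi_k u=c\phi+R$ with $\phi\in C_c^\infty(B_V)$ equal to $1$ near $V$, every restriction $h_i:=R|_{\gamma_i}$ vanishes at $V$. Working in polar coordinates $(r,\theta)$ around $V$, pick a smooth angular partition of unity $\{\psi_i(\theta)\}$ on $S^1$ with $\psi_i(\theta_j)=\delta_{ij}$ near each $\theta_j$, and set
\[
\tilde v(r,\theta):=\sum_i \psi_i(\theta)\,h_i(r).
\]
That $\tilde v\in H^1(B_V)$ follows, via $|\nabla\tilde v|^2=\tilde v_r^2+r^{-2}\tilde v_\theta^2$ in polar form, from the one-dimensional Hardy inequality
\[
\int_0^{r_0}\frac{|h_i(r)|^2}{r^2}\,dr\le C\int_0^{r_0}|h_i'(r)|^2\,dr,
\]
valid because $h_i(V)=0$; by construction, the trace of $\tilde v$ on each $\gamma_i$ is exactly $h_i$, so $w:=R-\tilde v$ lies in $H^1(B_V)$ with zero trace on all of $\Gamma\cap B_V$.

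Finally, approximate $\tilde v$ by replacing each $h_i$ in its defining formula with a $C_c^\infty(\gamma_i)$ sequence vanishing near $V$ that converges to $h_i$ in $H^1(\gamma_i)$ --- the same Hardy bound controls $H^1$-convergence in $B_V$ --- and approximate $w$ by $C_c^\infty(B_V\setminus\Gamma)$ functions, using that each $\gamma_i$ is a smooth curve of positive $1$-capacity so that $w$ lies in the $H^1$-closure of $C_c^\infty(B_V\setminus\Gamma)$. Reassembling via the partition $\{\chi_k\}$ (summing the case approximations) and a single final bulk mollification give the desired $C_0^\infty(\co)$ approximation. The principal technical difficulties are the $H^1$-verification of the polar decomposition near a node (which is where the one-dimensional Hardy inequality on each arc plays a pivotal role) and the capacity/density statement for the trace-free residual $w$; all remaining steps are routine.
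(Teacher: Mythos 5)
Your construction is essentially the paper's: localize via a partition of unity into bulk, tubular, and nodal pieces; at a node pass to polar coordinates, build an extension $\sum_i\psi_i(\theta)h_i(r)$ with angular cutoffs, and control the dangerous term $r^{-2}|u_\theta|^2$ by a one-dimensional estimate. Your Hardy inequality plays exactly the role that the $H^1(0,d)\hookrightarrow C^{1/2}([0,d])$ embedding plays in the paper; both deliver $\int_0^{r_0}|h_i(r)|^2/r\,dr<\infty$, which is all the polar gradient formula actually requires, so that ingredient is sound.

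The genuine gap lies in the two trace statements you treat as given. You assert that the one-dimensional traces $\restr{u}{\gamma_i}$, being continuous, automatically share a common value $c$ at an interior node $V$, citing only ``trace consistency across transverse curves, using the $1$-capacity of each $\gamma_i$''; and that $\restr{u}{\gamma}(V)=0$ at a boundary node follows from ``$H^1(\gamma)\hookrightarrow C^0(\gamma)$ together with the $H_0^1$-vanishing trace of $u$ on $\po$.'' Neither is a proof. Positive $1$-capacity of each arc only says each individual trace exists in $L^2(\gamma_i)$; it gives nothing about pointwise agreement of different traces at a shared endpoint. Likewise, the $H^{1/2}(\po)$ trace of an $H_0^1(\co)$ function carries no information at a single point $V$, so it cannot by itself pin down the value there of the continuous representative of $\restr{u}{\gamma}$. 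Both claims are true, and both are established in the paper by the same logarithmic-divergence argument (the lemma preceding the corollary on boundary nodes): if $h_1(V)\ne h_2(V)$, integrating $\partial_\theta$ over the annular sector $O_k=\{g_1(r)<\theta<g_2(r),\,d/(k+2)<r<d/2\}$ gives $\int_{O_k}|\gd u|^2\gtrsim|\ln k|$, contradicting $u\in H^1$. This is the one genuinely nontrivial step in the whole theorem, and your proposal simply assumes it. A similar explicit truncation estimate near each node is also needed to justify your appeal to capacity when approximating the trace-free residual $w$ by $C_c^\infty(B_V\setminus\Gamma)$ functions; the paper supplies exactly such an estimate (the bound on the ball-truncation error), whereas in your outline it remains an unproven citation.
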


To show this theorem, several basic facts have to be clarified.
\begin{lem}
Suppose that a pattern \(\mathcal{G}\) is regular, and in \(\Gamma\) there is only one element \(\gamma\) that does not intersect the outer boundary \(\po\). If either of
\begin{itemize}
    \item[1.] \(\gamma\) is a \(C^2\) closed curve;
    \item[2.] \(\gamma\) has two different end points,
\end{itemize}
is satisfied, then, without any regularity assumption on \(\po\), we have the above density result.
\end{lem}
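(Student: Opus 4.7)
The plan is to split any $u\in H_\Gamma^{1,0}(\co)$ as $u=w+r$, where $w\in H^1(\co)$ is an extension of the tangential trace $\psi:=\restr{u}{\gamma}\in H^1(\gamma)$ supported in a tubular neighborhood of $\gamma$, and the remainder $r:=u-w\in H_0^1(\co)$ has vanishing trace on $\gamma$. The piece $w$ carries all of the $H^1(\gamma)$ information and will be approximated by functions whose traces on $\gamma$ converge in $H^1(\gamma)$; the piece $r$ will be approximated by $C_0^\infty(\co\setminus\gamma)$ functions, whose traces on $\gamma$ vanish identically and therefore contribute trivially to the tangential-derivative part of the norm.

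For the construction of $w$ in Case 1, fix a $C^1$ tubular coordinate chart $(s,\tau)$ on a neighborhood $T_\delta\csubset\co$ of the $C^2$ closed curve $\gamma$, with $s$ a periodic arc-length coordinate along $\gamma$ and $\tau$ the signed distance to $\gamma$. Choose $\phi\in C_c^\infty((-\delta,\delta))$ with $\phi(0)=1$, and set $w(s,\tau):=\phi(\tau)\psi(s)$, extended by zero outside $T_\delta$. Standard mollification of $\psi$ on the circle yields smooth $\psi_n\to\psi$ in $H^1(\gamma)$, whence $w_n(s,\tau):=\phi(\tau)\psi_n(s)$ has compact support inside $T_\delta$, is smooth in tubular coordinates (so $C^1$ in Cartesian), and satisfies $w_n\to w$ in $H^1(\co)$ with $\restr{w_n}{\gamma}\to\psi$ in $H^1(\gamma)$; a finer Cartesian mollification then upgrades $w_n$ to $C^\infty(\co)$ while preserving both convergences (by $C^1$-convergence near $\gamma$). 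Case 2 is analogous after prolonging $\gamma$ by a short $C^2$-extension past each endpoint to a longer arc $\tilde\gamma\csubset\co$, extending $\psi$ by its (continuous) boundary values to $\tilde\psi\in H^1(\tilde\gamma)$, and inserting an additional smooth cutoff in $s$ that equals $1$ on the $s$-interval of $\gamma$ and vanishes outside that of $\tilde\gamma$.

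For the remainder $r$, since $\restr{r}{\gamma}=0$, the idea is to multiply by a cutoff $\chi_\epsilon$ that vanishes near $\gamma$ and then mollify to land in $C_0^\infty(\co\setminus\gamma)$. Away from the endpoints (and everywhere in Case 1), take $\chi_\epsilon$ to be a function of the tubular distance $|\tau|$, equal to $0$ for $|\tau|<\epsilon$ and $1$ for $|\tau|>2\epsilon$. The pointwise Poincaré estimate $|r(s,\tau)|^2\le |\tau|\int_0^{\tau}|r_\tau(s,t)|^2\,dt$, which holds because $r$ vanishes at $\tau=0$, then gives
\[
\norm{r\,\gd\chi_\epsilon}_{L^2}^2\le \frac{C}{\epsilon^2}\int_0^L\!\!\int_{|\tau|<2\epsilon}|r|^2\,d\tau\,ds\le C\norm{\gd r}_{L^2(T_{2\epsilon})}^2\longrightarrow 0,
\]
so that $r\chi_\epsilon\to r$ in $H^1(\co)$ and a standard Cartesian mollification completes the approximation.

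The main obstacle is the endpoint case of Case 2: near an endpoint $A$ of $\gamma$ the tubular chart degenerates, and inside the half-disk $B(A,\epsilon)$ the Poincaré bound above breaks down, since $r$ has no trace-zero constraint at the single point $A$. I would overcome this by replacing $\chi_\epsilon$ on a neighborhood of $A$ by the logarithmic radial cutoff
\[
\chi_\epsilon^{A}(\x):=\max\!\left(0,\min\!\left(1,\frac{\log(|\x-A|/\epsilon^2)}{\log(1/\epsilon)}\right)\right),
\]
which vanishes on $B(A,\epsilon^2)$, equals $1$ outside $B(A,\epsilon)$, and satisfies $\norm{\gd\chi_\epsilon^{A}}_{L^2(\R^2)}^2\le C/\log(1/\epsilon)\to 0$. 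After first replacing $r$ by a bounded truncation (which preserves the zero trace on $\gamma$ and is dense in the subspace of interest), the bound $\norm{r\,\gd\chi_\epsilon^{A}}_{L^2}\le \norm{r}_{L^\infty}\norm{\gd\chi_\epsilon^{A}}_{L^2}\to 0$ handles the contribution near $A$. Gluing $\chi_\epsilon^{A}$ with the tubular cutoff on a small annulus around $A$ produces a global cutoff that vanishes in a full neighborhood of $\gamma\cup\{A,B\}$; mollifying the cut-off $r$ then yields the required $C_0^\infty(\co\setminus\gamma)$ approximants $r_n$. Setting $u_n:=w_n+r_n$ gives a sequence in $C_0^\infty(\co)$ converging to $u$ in the $H_\Gamma^{1,0}$-norm. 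The essential fact exploited in the endpoint argument is that isolated points in $\R^2$ have zero $H^1$-capacity, which is precisely why Case 2 requires no further hypothesis on $\gamma$ at its endpoints and why no regularity of $\po$ enters anywhere.
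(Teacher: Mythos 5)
Your decomposition $u=w+r$ and the treatment of $w$ (tubular extension of the trace, then mollification) is essentially the paper's construction. Where you diverge is in approximating the remainder $r$ near the interior endpoint in Case~2. The paper multiplies by the angular cutoff $\psi(kr)$ with $|\gd\psi(kr)|\sim k$, and kills the $O(k)$ factor by a Poincar\'e-type estimate \emph{in the angular variable} (Lemma~2.2), exploiting that $r$ vanishes on the piece of $\gamma$ crossing the annulus $\{1/k<|\x-A|<2/k\}$; it then defers to a known result of Maz'ya for the piecewise $C^1$ residual domain. You instead use the zero $H^1$-capacity of a point: a logarithmic radial cutoff, whose gradient already has small $L^2$ norm, combined with an $L^\infty$ truncation of $r$. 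Both mechanisms are legitimate and lead to essentially the same conclusion; your route is more self-contained (no appeal to Maz'ya), while the paper's works directly in $H^1$ without the truncation-density detour and reuses its already-proved Lemma~2.2.

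There is, however, a concrete gap in your gluing. Your tubular cutoff $\chi_\epsilon$ (extended by $1$ outside the tube) is discontinuous across the endcap $\{s=0,\,|\tau|\le 2\epsilon\}$; for this jump to be annihilated you need the radial factor $\chi_\epsilon^A$ to vanish on that whole cap, but your $\chi_\epsilon^A$ vanishes only on $B(A,\epsilon^2)$ and is strictly positive on $B(A,\epsilon)\setminus B(A,\epsilon^2)$, which is exactly where part of the jump lives. So $\chi_\epsilon\cdot\chi_\epsilon^A$ (or $\min$ of the two) is not continuous and cannot be mollified to a $C_0^\infty(\co\setminus\gamma)$ function. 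The fix is easy but necessary: decouple the two scales so that the tubular jump is swallowed by the radial zero set — e.g.\ make $\chi_\epsilon$ vanish for $|\tau|<\epsilon^3$ (the tubular Poincar\'e estimate you quote is scale-free, so $\norm{r\,\gd\chi_\epsilon}_{L^2}\le C\norm{\gd r}_{L^2(T_{2\epsilon^3})}\to 0$ still holds), while keeping $\chi_\epsilon^A$ supported between $\epsilon^2$ and $\epsilon$ so that $\norm{\gd\chi_\epsilon^A}_{L^2}^2\sim 1/\log(1/\epsilon)\to 0$. With that correction your argument closes; as written it does not.
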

\begin{proof}
The idea is to split \(u\in H_{\Gamma}^{1,0}(\co)\) into a sum \(\mu+\nu\), where \(\mu\equiv0\) on \(\gamma\). In case 1, we extend \(\restr{u}{\gamma}\) to a tubular neighborhood \(N_\delta=\{\x\in\co;dist(\x,\gamma)<\delta\}\) for some \(\delta>0\) by taking constant values along the normals. The auxiliary function \(\nu\) is then defined to be the multiplication of the extension and a cut-off function \(\eta\in C_0^1(\co)\) satisfying \(\eta\equiv1\) on \(N_{\delta/2}\) and \(\eta\equiv0\) on \(N_{3\delta/4}\). Because \(\restr{u}{\gamma}\in H^1(\gamma)\), we know that \(\nu\in H_0^1(\co)\). Let \(\nu_n\in C^1(\gamma)\) be a sequence that converges to \(\restr{u}{\gamma}\) in \(H^1(\gamma)\) as \(n\rta\infty\), then the extensions of \(\nu_n\) similar to \(\restr{u}{\gamma}\) will then converge to \(\nu\) in \(H_\Gamma^{1,0}(\co)\) as \(n\rta\infty\). Since \(\gamma\) is \(C^2\), and \(\restr{\mu}{\gamma}\equiv0\), we know that, according to classical trace theorem, \(\mu\) can be approximated in \(H_0^1(\co)\) by a sequence of \(C_0^\infty(\co)\) functions with restriction 0 near \(\gamma\).

Case 2 is similar to the first one. We first extend \(\gamma\) a little bit along the tangent lines at the two end points, so that the extension \(\Tilde{\gamma}\) is still a \(C^1\) curve of case 2. Since \(\restr{u}{\gamma}\in H^1(\gamma)\), we know by Sobolev Imbedding Theorem, it is also in \(C^{1/2}(\gamma)\). From \(\gamma\) to \(\Tilde{\gamma}\), we extend \(\restr{u}{\gamma}\) by connecting boundary values and 0 linearly, so that the extension \(\Tilde{u}\in H_0^1(\Tilde{\gamma})\). Similarly to Case 1, we may further extend \(\Tilde{u}\) to be some \(\nu\in H_0^1(\co)\). The remaining problem is that whether there is a sequence of \(C_0^\infty(\co)\) functions with restriction 0 near \(\gamma\) approximating \(\mu=u-\nu\) in \(H_0^1(\co)\).

To tackle this problem, we assume that one of the end point is at the origin, and for \(k>0\), we consider
\[
\mu_k(\theta,r)=\mu(\theta,r)\cdot\psi(k r),
\]
where \((\theta,r)\in[-\pi,\pi]\times[0,\infty)\) is the polar coordinate system, and \(\psi\in C_0^{\infty}(\R_+)\) satisfying \(\psi(r)\equiv0, \,r\le1\), \(\psi(r)\equiv1,\,r\ge2\) and \(0\le\psi\le1\). Clearly \(\mu_k\) converges to \(\mu\) in \(L^2(\co)\) as \(k\rta\infty\), and if one can prove that \(\gd\mu_k\) also converges to \(\gd\mu\) in \(L^2(\co)\), then the approximation problem is reduced to a piecewise \(C^1\) (referring to the interior boundary) domain \(\co\backslash\lb B(p,1/k)\cup B(q,1/k)\cup\gamma\rb\), which is a known result \citep{Mazya-2010}.

Observe that
\[
\lw\gd\mu_k\rw^2=|(\mu_k)_r|^2+\frac{1}{r^2}|(\mu_k)_\theta|^2.
\]
Clearly, \((\mu_k)_\theta=(\mu\psi(k r))_\theta=(\mu)_\theta\psi(k r)\), and so we only have to worry about \((\mu_k)_r\). Notice that
\[
(\mu\psi)_r=\mu_r\psi+k\mu\psi'(k r),
\]
and thus it suffices to evaluate
\be
   \int_{\co} k^2 (\mu \psi'(k r))^2 r d r d\theta\le C k^2 \int_{-\pi}^{\pi}\int_{1/k}^{2/k} \mu^2 r d r d\theta.\label{balltruncationerror}
\ee
Observing that, when \(k\) is large, any component of \(\gamma\) contained in the ring \(\{1/k<r<2/k\}\) is nontangential to the \(\theta\) direction of the ring, we may write this component as some function graph \(\{\theta=g(r);1/k<r<2/k\}\). By the proof of Lemma (\ref{rotationfact}), we know that the error (\ref{balltruncationerror}) satisfies
\[
\begin{split}
    RHS&=C k^2 \int_{-\pi}^{\pi}\int_{1/k}^{2/k} (\mu(r,\theta)-\mu(r,g(r)))^2 r d r d\theta\\
       &\le C' k^2\int_{1/k}^{2/k}\lb\int_{-\pi}^{\pi}|\mu_\theta|d\theta\rb^2 r d r\\
       &\le  C'' k^2\int_{1/k}^{2/k}r^2\int_{-\pi}^{\pi}\frac{|\mu_\theta|^2}{r^2}r d\theta d r\\
       &\le C'''\int_{B(\mathbf{0},2/k)} |\gd \mu|^2 d\x\\
       &\overset{k\rta\infty}{\longrightarrow}0.
\end{split}
\]

\end{proof}

\begin{lem}
Suppose that a pattern \(\mathcal{G}\) is regular, and in \(\Gamma\) is composed of exactly one arc \(\gamma\) that has one end contained in \(\co\) and another on \(\po\) at the origin. Then if \(u\in H_\Gamma^{1,0}(\co)\) satisfies \(\restr{u}{\gamma}(\mathbf{0})=0\), we show that \(u\) can be approximated by \(C_0^\infty(\co)\) functions in \(H_\Gamma^{1,0}(\co)\).
\end{lem}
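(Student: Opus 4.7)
The plan is to decompose \(u=\mu+\nu\) where \(\nu\in H_0^1(\co)\) carries the trace \(\restr{u}{\gamma}\) on \(\gamma\) and \(\mu=u-\nu\) has vanishing trace on \(\gamma\), and then approximate each summand separately. Relative to Case~2 of the preceding lemma, the new difficulty is that the tubular neighborhood of \(\gamma\) near the origin meets \(\po\), so the naive tubular extension of \(\restr{u}{\gamma}\) will not lie in \(H_0^1(\co)\); the hypothesis \(\restr{u}{\gamma}(\mathbf{0})=0\) enters precisely here and enables a dilated ``cone-shaped'' cutoff.

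Step~1 (construction of \(\nu\)). Fix \(C^1\) curvilinear coordinates \((s,\tau)\) along \(\gamma\) near the origin so that \(\gamma=\{\tau=0,\,s>0\}\) and \(\po\) is locally the graph \(s=h(\tau)\) with \(h(0)=0\), \(h'(0)\ne 0\) (the pattern's regularity supplies the non-tangentiality, and mild boundary regularity supplies the graph). Near the origin define
\be
\nu(s,\tau):=\restr{u}{\gamma}(s)\,\phi(\tau/s),
\ee
where \(\phi\in C_c^\infty(\R)\) has \(\phi(0)=1\) and vanishes on a neighborhood of \(\sigma=1/h'(0)\); away from the origin glue in the ordinary tubular cutoff of Case~2 via a smooth partition of unity in \(s\). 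By construction \(\restr{\nu}{\gamma}=\restr{u}{\gamma}\) and \(\restr{\nu}{\po}\equiv 0\). The change of variables \(\sigma=\tau/s\) on the cone region yields
\be
\int|\gd\nu|^{2}\,ds\,d\tau\;\lesssim\;\int s\,|(\restr{u}{\gamma})_s|^{2}\,ds+\int\frac{|\restr{u}{\gamma}(s)|^{2}}{s}\,ds,
\ee
both finite: the first is bounded by \(L\,\|\restr{u}{\gamma}\|_{H^1(\gamma)}^{2}\), and the second by the one-dimensional Hardy inequality applied to \(\restr{u}{\gamma}\in H^1(\gamma)\) using \(\restr{u}{\gamma}(\mathbf{0})=0\). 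Hence \(\nu\in H_0^1(\co)\cap H_\Gamma^{1,0}(\co)\).

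Step~2 (approximation). The remainder \(\mu=u-\nu\in H_0^1(\co)\) has \(\restr{\mu}{\gamma}\equiv 0\), so the classical density result \citep{Mazya-2010} already invoked in the preceding lemma provides \(C_0^\infty(\co)\) functions supported off \(\gamma\) that approximate \(\mu\) in \(H_0^1(\co)\); these automatically approximate \(\mu\) in \(H_\Gamma^{1,0}(\co)\) because both traces on \(\gamma\) vanish. For \(\nu\), first approximate \(\restr{u}{\gamma}\) in \(H^1(\gamma)\) by smooth functions vanishing at the origin (Hardy-truncation plus mollification along \(\gamma\)), build the corresponding \(\nu_m\) by the same cone construction, check \(\nu_m\rta\nu\) in \(H_\Gamma^{1,0}(\co)\) by reapplying the Step~1 estimate to differences, and mollify each \(\nu_m\) to a \(C_0^\infty(\co)\) function. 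A diagonal extraction produces the required approximating sequence for \(u\).

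The main obstacle is engineering \(\nu\) to simultaneously match \(\restr{u}{\gamma}\) on \(\gamma\), vanish on \(\po\), and have finite \(H^1\) energy. The cone cutoff \(\phi(\tau/s)\) is the key device: its level sets emanate from the origin, the inner one running along \(\gamma\) where \(\phi\equiv 1\), the outer one aligned with \(\po\) where \(\phi\equiv 0\); this geometry is exactly why the origin is the unique point on \(\po\) compatible with a nonzero trace on \(\gamma\). The apparent blow-up \(|\gd\nu|\sim|\restr{u}{\gamma}|/s\) is absorbed exactly by the one-dimensional Hardy inequality on \(\gamma\), which is the single place the hypothesis \(\restr{u}{\gamma}(\mathbf{0})=0\) is used.
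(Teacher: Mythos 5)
Your proof is correct and takes essentially the same route as the paper: both decompose $u$ into a piece carrying the trace on $\gamma$ near the origin and a remainder vanishing on $\gamma$, and both build the trace-carrying piece via a cone-shaped angular cutoff emanating from the origin whose $H^1$ norm is finite precisely because $\restr{u}{\gamma}(\mathbf{0})=0$. The only cosmetic differences are the choice of coordinates (your curvilinear $(s,\tau)$ with cutoff $\phi(\tau/s)$ versus the paper's polar $(r,\theta)$ with cutoff $\phi(\theta)$) and the tool used to tame the $\int |\restr{u}{\gamma}|^2/s\,ds$ singularity (you cite the one-dimensional Hardy inequality, the paper cites the $H^1\hookrightarrow C^{1/2}$ embedding giving $|\mu(r)|\lesssim r^{1/2}$); the two estimates are interchangeable here.
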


\begin{proof}
The crucial point is to delete a proper function so that \(u\) vanishes on \(\gamma\). To do this, we start with a small disc \(B(\mathbf{0},d)\) with \(d>0\) and consider a smooth cut-off of \(u\) in this disc. Since \(\mathcal{G}\) is regular, \(\gamma\) approaches \(\po\) nontangentially, and so we may find \(\theta_1,\theta_2\in(-\pi,\pi)\) and \(\epsilon>0\) such that the cone \(\{\theta_1+\epsilon<\theta<\theta_2-\epsilon\}\) contains \(\gamma\), and the cone \(\{\theta_1<\theta<\theta_2\}\cap B(\mathbf{0},d)\subset\co\). Then we define \(0\le\phi(\theta)\le1\) to be the cut-off function on \((-\pi,\pi)\) such that \(\phi\equiv0\) outside \((\theta_1,\theta_2)\), and \(\phi\equiv1\) in \((\theta_1+\epsilon,\theta_2-\epsilon)\).

When \(d>0\) is small enough, the segment of \(\gamma\) in the disc can be written as some function graph \(\{\theta=g(r);0\le r<d\}\). Since \(\gamma\) is \(C^2\), we know that \(\mu(r)\coloneqq u(r,g(r))\) is in \(H^1(0,d)\). We claim that \(\phi(\theta) \mu (r)\) is in \(H_0^1(\{\theta_1<\theta<\theta_2\})\). To see this, we merely have to worry about the \(L^2\) integrability of the gradient. Since \((\mu(r)\phi(\theta))_r=\mu_r(r)\phi(\theta)\), we only have to check the derivative along \(\theta\), which is
\[
\begin{split}
    \int_0^{d}\int_{\theta_1}^{\theta_2}\frac{\lw \mu(r)\phi'(\theta)\rw^2}{r^2} \cdot r d \theta d r&\le C (\theta_2-\theta_1) \int_0^{d} (\mu(r))^2/r d r\\
    &\le C'(\theta_2-\theta_1) d\\
    &<\infty,
\end{split}
\]
where we have used the embedding \(H^1(0,d)\hookrightarrow C^{1/2}([0,d])\), which implies that \(\lw \mu(r)-\mu(0)\rw\le r^{1/2}\). Moreover, it is not hard to observe that \(\mu\phi\) can be approximated in \(H_{\gamma}^{1,0}\lb\{\theta_1\le\theta\le\theta_2\}\rb\) by \(C_0^\infty\lb\{\theta_1<\theta<\theta_2\}\rb\) functions.

Now, by a partition of unity and compactness of \(\gamma\), we may handle the trace piece by piece, and finally we arrive at the piece that involves another end point. This end point can be dealt with by using both the above method and the extension method in the proof of the above lemma. 

\end{proof}

\begin{lem}
Suppose that a pattern \(\mathcal{G}\) is regular, and in \(\Gamma\) there is only one component compactly contained in \(\co\), composed of only one node at the origin and arcs \(\gamma_1,\cdots,\gamma_m\in\mathcal{G}\) connecting to it. Then if \(u\in H_\Gamma^{1,0}(\co)\) satisfies
\[
\restr{u}{\gamma_i}(\mathbf{0})=\restr{u}{\gamma_j}(\mathbf{0}),\,\forall 1\le i\le j\le m,
\]
we show that \(u\) can be approximated by \(C_0^\infty(\co)\) functions in \(H_\Gamma^{1,0}(\co)\).\label{raydensity}
\end{lem}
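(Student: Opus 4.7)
The plan is to adapt the two-step strategy of the previous lemmas: first kill the common trace value at the node by subtracting a smooth function, then localise away from the node by a radial cut-off, and finally reduce to the disjoint-arc situation already handled. Set $c := \restr{u}{\gamma_1}(\mathbf{0})$ (equal to $\restr{u}{\gamma_i}(\mathbf{0})$ for every $i$ by hypothesis), pick $\phi \in C_0^\infty(\co)$ with $\phi \equiv 1$ on a neighbourhood of the origin, and put $v := u - c\phi$. Since $c\phi$ is smooth and compactly supported, $v \in H_\Gamma^{1,0}(\co)$ with $\restr{v}{\gamma_i}(\mathbf{0}) = 0$ for all $i$; so it suffices to approximate $v$ in $H_\Gamma^{1,0}(\co)$ by $C_0^\infty(\co)$ functions.

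With $\psi$ as in the proof of the preceding lemmas, set $v_k := v \cdot \eta_k$ where $\eta_k(r) := \psi(kr)$. Convergence $v_k \rta v$ in $L^2(\co)$ is immediate; for the bulk gradient the only nontrivial term to control is
\[
k^2 \int_{\{1/k < r < 2/k\}} v^2 \, r \, dr \, d\theta.
\]
The new difficulty compared to the earlier lemmas is that $v$ need not vanish on the arcs; we know only that $\mu_i(r) := v(r, g_i(r)) \in H^1(0, d)$ with $\mu_i(0) = 0$, where $\{\theta = g_i(r)\}$ is the graph description of $\gamma_i$ near the origin, available because the pattern is regular. For $k$ large, the arcs split the annulus $\{1/k < r < 2/k\}$ into sectors $S_1, \ldots, S_m$ bounded by consecutive arcs, and inside $S_j$ I split
\[
v(r,\theta) = \mu_j(r) + \lb v(r,\theta) - v(r, g_j(r))\rb.
\]
The second summand is handled by the rotation trick of Lemma \ref{rotationfact}, yielding a contribution bounded by $\int_{B(\mathbf{0}, 2/k)} |\gd v|^2 \, d\x \rta 0$, exactly as in the two previous lemmas. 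The first summand is dealt with by the Hardy-type inequality $|\mu_j(r)|^2 \le r \norm{\mu_j'}_{L^2(0, r)}^2$, which after multiplication by $k^2 r$ and integration over $(1/k, 2/k)$ produces a bound that vanishes as $k \rta \infty$.

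The arc-length gradient on each $\gamma_i$ is treated in the same spirit: parametrising $\gamma_i$ by arc length $s$ starting at the node, the only new term in $(v_k)_s$ is essentially $k \restr{v}{\gamma_i}(s) \, \psi'(k |\gamma_i(s)|) \, \frac{d}{ds}|\gamma_i(s)|$, and the Hardy bound $|\restr{v}{\gamma_i}(s)|^2 \le s \norm{(\restr{v}{\gamma_i})'}_{L^2(0, s)}^2$ forces this term to zero in $L^2(\gamma_i)$. Combined with the bulk estimate, this gives $v_k \rta v$ in $H_\Gamma^{1,0}(\co)$.

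Once $v_k$ vanishes on $B(\mathbf{0}, 1/k)$, the relevant pieces of $\Gamma$ are the disjoint compact sub-arcs $\gamma_i \setminus \bar{B}(\mathbf{0}, 1/k)$, each with two distinct end-points inside $\co$. Choosing disjoint tubular neighbourhoods of these sub-arcs together with an auxiliary open set disjoint from $\Gamma$ yields a finite cover of $\co$; the associated partition of unity decomposes $v_k$ into pieces each of which has arc trace supported on at most one $\gamma_i$, and each such piece falls under case 2 of the first auxiliary lemma and hence admits a $C_0^\infty(\co)$ approximation in $H_\Gamma^{1,0}(\co)$. Summing these approximations and diagonalising with respect to $k$ yields an approximation of $v$, and adding back $c\phi$ yields one of $u$. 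The main obstacle is the bulk gradient estimate in the thin annulus near the node: the regularity hypothesis enters precisely here through the nontangential meeting of the arcs at the origin, without which the sectors $S_j$ would degenerate and both the graph description $\theta = g_j(r)$ and the rotation argument would break down.
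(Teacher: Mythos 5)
Your proposal is correct, and it takes a route genuinely different from the paper's. The paper kills the arc traces outright: after subtracting the constant, it extends each $\restr{u}{\gamma_i}$ by revolution to a function $\mu_i$ on the disc, multiplies by an angular cut-off $\phi_i$ supported in a cone isolating $\gamma_i$ (exactly the construction from the preceding lemma, where nontangentiality guarantees disjoint cones), and observes that $u-\sum_i\mu_i\phi_i\in H_0^1(B(\mathbf{0},d))$ has zero trace on $\Gamma$, so it can be approximated by smooth functions vanishing near $\Gamma$. You instead run the radial truncation $\psi(kr)$ from case 2 of the first auxiliary lemma directly at the node; this generates new error terms because, unlike in that earlier lemma, the trace of $v$ on the arcs does not vanish, and you close the gap with the correct combination of the sector decomposition, the rotation estimate for the angular deviation, and a Hardy-type bound $|\mu_j(r)|^2\le r\,\norm{\mu_j'}_{L^2(0,r)}^2$ exploiting $\mu_j(0)=0$ both in the bulk term and in the tangential-derivative term on each $\gamma_i$. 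You then reduce to the disjoint-arcs case (case 2 of the first lemma) via tubular neighbourhoods. The paper's route is shorter because it re-uses the revolution extension wholesale and lands on a known zero-trace approximation statement; your route shows that the radial-truncation technique is robust at nodes once fortified by a Hardy estimate, and it gives a cleaner reduction to the already-handled disjoint-arcs situation. One small point worth making explicit if you flesh this out: when you appeal to case 2 for the sub-arcs $\gamma_i\setminus\bar B(\mathbf{0},1/k)$, you should note that nontangentiality gives an angular gap of size $O(1)$ between distinct arcs, so the sub-arcs admit disjoint tubular neighbourhoods of radius comparable to $1/k$ that miss the other arcs; this is what lets the partition of unity separate the pieces and keeps the approximants from disturbing the traces near the origin.
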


\begin{proof}
Deleting a \(C_0^\infty(\co)\) function that takes value \(\restr{u}{\gamma_1}(\mathbf{0})\) at the origin, we see that we only need consider \(u\) that has value 0 at the origin. By a partition of unity, we may safely consider \(u\) with support contained in the disc \(B(\mathbf{0},d)\) for some small \(d>0\), and we have the following observations:
\begin{itemize}
    \item[1.] By the property of the partition of unity, we may without losing generality assume that \(\restr{u}{\gamma_{i}}=0\) on \(\partial B(\mathbf{0},d)\) for each \(i=1,\cdots,m\);
    \item[2.] Because \(d>0\) is chosen small, we see that each arc \(\gamma_i\) can be written as some function graph \(\{\theta=g_i(r);0\le r <d\}\) in polar coordinates.
\end{itemize}

By the second observation, each \(\restr{u}{\gamma_i}\) can be extended to \(B(\mathbf{0},d)\) by revolution, and we denote this extension by \(\mu_i\). Suppose both \(\{\theta_1\le \theta\le \theta_2\}\) and \(\{\theta_1+\epsilon\le \theta\le \theta_2-\epsilon\}\) for \(\epsilon>0\,,\theta_{1},\theta_2\in(-\pi,\pi)\) are cones that only contain \(\gamma_i\). Then we define \(0\le\phi_i(\theta)\le1\) to be the cut-off function defined in the proof of the above lemma, and by the proof of the above lemma, \(\mu_i\phi_i\in H_0^1\lb\{\theta_1\le\theta\le\theta_2\}\rb\) and \(\mu_i\phi_i\) can be approximated in \(H_{\gamma_i}^{1,0}\lb\{\theta_1\le\theta\le\theta_2\}\rb\) by \(C_0^\infty\lb\{\theta_1<\theta<\theta_2\}\rb\) functions.

Therefore, we see \(u-\sum_{i=1}^m\mu_i\phi_i\) is an \(H_0^1(B(\mathbf{0},d))\) function with trace \(0\) on \(\Gamma\) (this is more exactly ``\(
\Gamma\cap B(\mathbf{0},d)\)", but since we have reduced our problem to this special case, we use the same notation), which can certainly be approximated by \(C_0^\infty(B(\mathbf{0},d))\) functions that are 0 near \(\Gamma\).

\end{proof}

\begin{lem}
The conditions on \(\mathcal{G}\) in the above lemma implies that for every \(u\in H_\Gamma^{1,0}(\co)\)
\[
\restr{u}{\gamma_i}(\mathbf{0})=\restr{u}{\gamma_j}(\mathbf{0}),\,\forall 1\le i\le j\le m.
\]
\end{lem}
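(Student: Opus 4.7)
The plan is to realize each $\restr{u}{\gamma_i}(\mathbf{0})$ as a common limit at the node and use an angular-direction FTC estimate, in the spirit of Lemma \ref{rotationfact}, to force those limits to agree. After translating the node to the origin, the $C^2$ regularity of each arc together with the nontangentiality guaranteed by regularity of $\mathcal{G}$ lets me write $\gamma_i$ near $\mathbf{0}$ as a polar graph $\theta=g_i(r)$ for $0 \le r \le d$, with the limiting angles $g_i(0)$ pairwise distinct and, for $d$ small, the graphs pairwise disjoint on $(0,d]$. Since $\restr{u}{\gamma_i}\in H^1(\gamma_i)$ admits a continuous representative by the one-dimensional Sobolev embedding, the value $\alpha_i := \restr{u}{\gamma_i}(\mathbf{0})$ is well-defined; after reparametrizing $\gamma_i$ by $r$ (a $C^2$ change from arclength), continuity gives $\lim_{r\to 0^+} u(r,g_i(r)) = \alpha_i$.

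The core step is, for any two indices $i\ne j$ with (WLOG) $g_i<g_j$ on $(0,d)$ and any $r_0\in(0,d]$, the estimate
\[
\lw \int_0^{r_0} \lmb u(r, g_j(r)) - u(r, g_i(r)) \rmb dr \rw
\le \int_{R_{ij}(r_0)} |\gd u|\, dx\, dy
\le \sqrt{\pi}\, r_0\, \norm{\gd u}_{L^2(B(\mathbf{0},r_0))},
\]
where $R_{ij}(r_0)=\{(r,\theta): 0<r<r_0,\ g_i(r)<\theta<g_j(r)\} \subset B(\mathbf{0}, r_0)$. For smooth $u$ this follows from $u(r,g_j(r))-u(r,g_i(r))=\int_{g_i(r)}^{g_j(r)} u_\theta(r,\theta)\,d\theta$, the pointwise inequality $|u_\theta|/r \le |\gd u|$ in polar coordinates, the Jacobian identity $dr\,d\theta = r^{-1}\,dx\,dy$, together with Cauchy--Schwarz and $|R_{ij}(r_0)|\le \pi r_0^2$; for a general $u\in H_\Gamma^{1,0}(\co)$ it extends by density, since the trace maps into $L^2(\gamma_i)$ and $L^2(\gamma_j)$ are continuous on $H^1(\co)$.

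Dividing by $r_0$ and sending $r_0 \to 0^+$, the left-hand side tends to $|\alpha_j-\alpha_i|$ by the continuity of the arc-traces at the node, while the right-hand side tends to $0$ because $|\gd u|^2$ is integrable on $\co$. Hence $\alpha_i=\alpha_j$, which is the claim. The main technical subtlety is the identification step when passing from smooth $u$ to $H^1$: one has to check that the pointwise expression $u(r,g_i(r))$ in the FTC calculation coincides (as an element of $L^2$ in $r$) with the trace $\restr{u}{\gamma_i}$ whose continuous representative defines $\alpha_i$. This is standard, handled by approximating $u$ in $H^1(\co)$ by smooth functions and invoking continuity of the trace map into $L^2(\gamma_i)$ together with the one-dimensional Sobolev embedding along $\gamma_i$.
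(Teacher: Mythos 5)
Your proof is correct, and it takes a genuinely different route from the paper's. Both arguments rest on the same angular FTC idea (the one underlying Lemma~\ref{rotationfact}): the difference $u(r,g_j(r))-u(r,g_i(r))$ is controlled by $\int_{g_i(r)}^{g_j(r)}|u_\theta|\,d\theta$. But the execution diverges sharply from there. The paper first uses the constructions of Lemma~\ref{raydensity} to subtract off pieces $\nu_i$ and reduce to a function $\nu$ that is \emph{constant} $h_i$ on each $\gamma_i\cap B(\mathbf{0},d/2)$; it then runs a contradiction argument, showing that if $h_1\ne h_2$ then integrating the angular estimate with the weight $1/r$ over shrinking annuli $\{d/(k+2)<r<d/2\}$ forces $\int_{O_k}|\gd\nu|^2\gtrsim|\ln k|$, contradicting $\nu\in H^1$. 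You instead integrate the angular estimate in $r$ over $(0,r_0)$ \emph{without} the $1/r$ weight, apply Cauchy--Schwarz to get the linear-in-$r_0$ bound $\sqrt{\pi}\,r_0\,\norm{\gd u}_{L^2(B(\mathbf{0},r_0))}$, divide by $r_0$, and send $r_0\to 0^+$, letting absolute continuity of $\int|\gd u|^2$ kill the right-hand side while radial averaging and one-dimensional Sobolev continuity produce $|\alpha_j-\alpha_i|$ on the left. Your version has two advantages: it works directly on $u$ with no auxiliary $\nu_i$ construction, and it relies only on classical density of smooth functions in $H^1(\co)$ rather than on the pattern-adapted density statements of this section, which makes the logical dependencies tidier. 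One small point worth flagging in a final write-up: when you "reparametrize $\gamma_i$ by $r$," you should note that nontangentiality of the approach (regularity of $\mathcal{G}$) and $C^2$ regularity of the arc guarantee $ds/dr$ is bounded above and below near the origin, so the change of variable neither loses continuity of the trace at $\mathbf{0}$ nor distorts the $L^2(\gamma_i)$ convergence used in the density step.
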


\begin{proof}
We set \(h_i=\restr{u}{\gamma_i}(\mathbf{0})\) for each \(i=1,\cdots,m\), and consider some \(\eta\in C_0^\infty(\co)\) with \(\eta(\x)\equiv1\) for \(\x\in B(\mathbf{0},d/2)\). Define \(\eta_i=h_i\eta\). For each \(i\), we have \(\restr{(u-\eta_i)}{\gamma_i}(\mathbf{0})=0\), and so by the proof of Lemma (\ref{raydensity}), we may find some \(H_0^1(B(\mathbf{0},d))\) function \(\nu_i\) with support disjoint from other curves so that \(\restr{(u-\eta_i-\nu_i)}{\gamma_i}\equiv0\). Therefore, we may observe that \(\nu\coloneqq u-\sum_{i=1}^m \nu_i\) is constant \(h_i\) on each \(\gamma_i\cap  B(\mathbf{0},d/2)\).

Recall that when \(d\) is small enough, each segment \(\gamma_i\cap  B(\mathbf{0},d)\) can be written as function graph \(\{\theta=g_i(r);1\le r<d\}\), and for convenience, we assume that \(g_i\le g_j\) for \(1\le i\le j\le m\). Taking \(i=1\) as an example, we consider for \(k>1\) the domain \(O_k = \{g_1(r)<\theta<g_2(r); d/(k+2)<r<d/2\}\). Notice that by Lemma (\ref{raydensity}), there will be a sequence \(v^n\in C^1(O_k)\cap C^0(\overline{O_k})\) that converges to \(\nu\) in \(H^1(O_k)\) and satisfies \(\restr{v^n}{\{\gamma_i;d/(k+2)<r<d/2\}}\equiv h_i\).

Applying Fundamental Theorem of Calculus to \(v^n\), we have for \(d/(k+2)<r<d/2\),
\[
\int_{g_1(r)}^{g_2(r)} v_\theta^n (r,\theta) d\theta = h_2-h_1.
\]
If \(h_2\ne h_1\), then we have the inequality
\[
(g_2(r)-g_1(r))\int_{g_1(r)}^{g_2(r)} (v_\theta^n)^2 d\theta \ge |h_2-h_1|^2,
\]
and because \(g_2(r)>g_1(r)\) for \(r>0\),
\[
\int_{g_1(r)}^{g_2(r)} (v_\theta^n)^2 d\theta \ge \frac{|h_2-h_1|^2}{(g_2(r)-g_1(r))}>\frac{|h_2-h_1|^2}{2\pi}.
\]
Moreover, we have
\[
\begin{split}
    \int_{O_k}|\gd v^n|^2 d\x&\ge \int_{d/(k+2)}^{d/2}\int_{g_1(r)}^{g_2(r)}\frac{|v_\theta^n|^2}{r^2}\cdot r d\theta d r\\
                         &\ge \frac{|h_2-h_1|^2}{2\pi}\int_{d/(k+2)}^{d/2} \frac{1}{r} d r\\
                         &\ge O(|\ln{(k)}|),
\end{split}
\]
and so by the convergence of \(v^n\) to \(\nu\) in \(H^1(O_k)\), we have the estimate
\[
\int_{O_k} |\gd \nu|^2 d\x \ge  O(|\ln{(k)}|),
\]
which is impossible because \(\nu\in H^1(B(\mathbf{0},d))\).

\end{proof}

\begin{cor}
If a curve \(\gamma\subset \co\) approaches \(\po\) at the origin nontangentially, then for \(u\in H_\gamma^{1,0}(\co)\), we have \(\restr{u}{\gamma}(\mathbf{0})=0\).
\end{cor}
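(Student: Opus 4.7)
The plan is to adapt the divergent-Dirichlet-integral contradiction from the preceding lemma, with extension by zero playing the role of ``boundary trace on $\po$'' so that no parametrization of $\po$ is needed. Assuming for contradiction that $h\coloneqq\restr{u}{\gamma}(\mathbf{0})\neq 0$, I may replace $u$ by $-u$ to reduce to $h>0$, and since $u\in H_0^1(\co)$ I extend $u$ by zero to $\tilde u\in H^1(\R^2)$.

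In polar coordinates $(r,\theta)$ at the origin I would write $\gamma$ near the origin as $\theta=g(r)$ with $g(0)=\theta_\gamma$. Nontangentiality of $\gamma$ at $\po$ furnishes an angle $\theta_0$ and $r_0>0$ such that the ray $\{(r,\theta_0):0<r<r_0\}$ lies in $\R^2\setminus\overline{\co}$; after further shrinking $d\le r_0$, the embedding $H^1(\gamma)\hookrightarrow C^{1/2}(\gamma)$ ensures $\restr{u}{\gamma}>h/2$ at every $\gamma$-point of radius less than $d$. Set $W_d=\{(r,\theta):\theta_0<\theta<g(r),\,0<r<d\}$ and $L:=\sup_{0<r<d}|g(r)-\theta_0|<\infty$.

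By Fubini applied to $\tilde u\in H^1(\R^2)$ in polar coordinates, for a.e.\ $r\in(0,d)$ the slice $\tilde u(r,\cdot)$ is absolutely continuous in $\theta$, with $\tilde u(r,\theta_0)=0$ (because $\tilde u\equiv 0$ on the outside ray) and $\tilde u(r,g(r))=\restr{u}{\gamma}(r,g(r))>h/2$. FTC followed by Cauchy--Schwarz, exactly as in the preceding lemma, then yields
\[
\int_{\theta_0}^{g(r)}\tilde u_\theta^2\,d\theta\;\ge\;\frac{(h/2)^2}{|g(r)-\theta_0|}\;\ge\;\frac{h^2}{4L}\qquad\text{for a.e.\ }r\in(0,d),
\]
so the polar expression of the Dirichlet integral gives
\[
\int_{W_d}|\gd\tilde u|^2\,d\x\;\ge\;\int_0^d\frac{1}{r}\int_{\theta_0}^{g(r)}\tilde u_\theta^2\,d\theta\,dr\;\ge\;\frac{h^2}{4L}\int_0^d\frac{dr}{r}\;=\;+\infty,
\]
contradicting $\tilde u\in H^1(\R^2)$. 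Hence $h=0$.

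The main obstacle is the geometric step of extracting the outside ray $\theta=\theta_0$ from the ``nontangential'' hypothesis, i.e.\ producing a strictly positive and uniformly bounded angular gap between $\gamma$ and $\po$ at the origin. Once that is in place, extension by zero removes every trace-theoretic subtlety on the $\po$ side of $W_d$, and the divergent $\log$ integral is a verbatim rerun of the contradiction used in the preceding lemma.
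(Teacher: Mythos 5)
Your proof is correct and rests on the same logarithmic-divergence contradiction that drives the paper's preceding lemma on interior nodes; the paper's one-line ``proof'' of this corollary simply points back to that lemma, and your argument realizes that pointer in a slightly different way. The genuine difference is the role of \(\po\): a verbatim rerun of the interior-node lemma would treat \(\po\) as the second ``arc'' (carrying trace \(0\)), requiring \(\po\) to be written locally as a polar graph \(\theta=g_{\po}(r)\) and an approximating sequence \(v^n\) preserving that zero trace, whereas your extension-by-zero bypasses all trace-theoretic bookkeeping on \(\po\): \(\tilde u\in H^1(\R^2)\) is automatic from \(u\in H_0^1(\co)\), the exterior ray \(\theta=\theta_0\) gives \(\tilde u(r,\theta_0)=0\) for a.e.\ \(r\), and the absolutely-continuous-on-lines representative of \(\tilde u\) does the rest, with the angular gap trivially bounded by \(2\pi\) so the estimate never degenerates. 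One small misattribution is worth correcting: the exterior ray is not a consequence of the \emph{nontangentiality of} \(\gamma\). Nontangentiality of \(\gamma\) is what produces the interior cone around \(\gamma\) used in the earlier density lemma; the existence of a ray in \(\R^2\setminus\overline{\co}\) emanating from a boundary point is a regularity property of \(\co\) itself (an exterior cone condition, as for a Lipschitz or \(C^1\) boundary, which the notion of ``approaching \(\po\) nontangentially'' implicitly presupposes). With that attribution fixed the argument is sound, and in fact your route uses a bit less than the stated hypothesis on \(\gamma\).
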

\begin{proof}
The proof is covered by the one of the above lemma.
\end{proof}

\begin{proof}[Proof of Theorem (\ref{densityresult})]
By a partition of unity, the problem can be divided into the cases that have been discussed above in detail.
\end{proof}

\subsection{EBCs on Patterns: Truncation Method}
In this subsection, we deal with an EBC problem on some Lipschitz domain \(\co\) associated with a given pattern \(\mathcal{G}\). The formation of an EBC in some ``distributional" sense does not require the regularity of the whole pattern described in prior subsection. However, the regularity of the whole pattern ensures that the solution to the effective model is unique. 

Recalling in prior subsection, we have already decomposed \(\mathcal{G}\) into several cases. In this subsection, we also keep in mind these cases, but because the methods in dealing with different patterns are so similar (we call it \emph{truncation}), we only present a proof for a classical case: \(\mathcal{G}\) composed of one arc \(\gamma=\Gamma\) with one end on \(\po\) and one in \(\co\). 

Let \(\delta>0\) be small and \((s,\tau),\,s\in(0,l),\,\tau\in(-\delta,\delta)\) a normal reparametrization of a \(\delta\)-tubular neighborhood of \(\gamma\) defined in Introduction. As before, we define \(R_\delta\) to be the intersection of \(\co\) and the image of \((0,l)\times(-\delta,\delta)\) in \(\R^2\). We further define the truncated roads \(R_{\delta,q}\) to be the image of \((q,l-q)\times(-\delta,\delta)\) for some \(q>0\). By basic geometry, for every \(q>0\), there is some \(\delta_q>0\) such that for all \(0<\delta<\delta_q\), the truncated road \(R_{\delta, q}\csubset \co\).

What we concern about is the following problem
\be
\bca
-\gd\cdot(\bar{\sigma}\gd u^\delta) (\x)=f(\x),&\x\in\co,\\
u^\delta(\x)=0,&\x\in\po,
\eca\label{intcapoutoriginalequation}
\ee
where \(\bar{\sigma}=\sigma>>1\) in \(R_\delta\cap\{\tau>0\}\) and \(=1\) elsewhere. Assuming that \(\sigma\delta\rta a>0\), what will \(u^\delta\) converge to? This problem has been well investigated by H. Li and X. Wang \citep{Li-2017,Li2020} for the case that \(\Gamma\) is a circle without conic points that is contained in the domain \(\co\). In fact, their proofs are the main tools to tackle this generalization.

We start with the weak solution \(u^\delta\in H_0^1(\co)\) to (\ref{intcapoutoriginalequation}), and then for \(v\in H_0^1(\co)\), 
\be
\int_\co \gd v\cdot(\bar{\sigma}\gd u^\delta)=\int_\co fv,
\ee
and by replacing \(v=u^\delta\), we obtain the energy estimate for \(u^\delta\)
\be
\int_\co \bar{\sigma} |\gd u^\delta|^2\le O(1),
\ee
which leads to the weak convergence of \(u^\delta\) to some \(u^\ast\) in \(H_0^1(\co)\), as \(\delta\rta0\).

\begin{lem}
Suppose that \(V_1\in\po,V_2\in\co\) are the two end points of \(\Gamma\). Then for every \(\co'\csubset\co\backslash\{V_2\}\) and \(d=\min\{dist(\co',\po),dist(\co',V_2)\}>0\), there is some \(\delta_d>0\) such that for all \(0<\delta<\delta_d\), we have
\[
\int_{\co'} \bar{\sigma} |\gd^2 u|^2 \le O(1).
\]\label{D2estimate}
\end{lem}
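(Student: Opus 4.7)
The plan is to derive this weighted $H^2$ estimate by the standard interior tangential difference-quotient argument, exploiting the key structural fact that in the curvilinear coordinates $(s,\tau)$ adapted to $\gamma$ the coefficient $\bar{\sigma}$ depends only on $\tau$ (and not on $s$). Consequently the tangential difference quotient $D_s^h$ commutes with multiplication by $\bar{\sigma}$, so no terms of order $\sigma$ ever fall on the commutator; this is precisely what allows the estimate to be uniform in $\delta$.

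For the setup, fix an intermediate set $\co''$ with $\co'\csubset\co''\csubset\co\backslash\{V_2\}$ and positive distance to $\po$, a cutoff $\eta\in C_0^\infty(\co'')$ with $\eta\equiv 1$ on $\co'$, and a fixed $\delta_0\in(0,d/2)$ small enough that the strip $R_{\delta_0}\cap\mathrm{supp}(\eta)$ sits entirely inside the $C^1$ normal coordinate chart supplied in the Introduction. For every $0<\delta<\delta_0$ the equation (\ref{intcapoutoriginalequation}), written in these $(s,\tau)$ coordinates, takes the form $-\gd\cdot(\bar{\sigma}(\tau) A(s,\tau)\gd u^\delta)=\tilde f(s,\tau)$, with $A$ symmetric, uniformly elliptic and $C^1$, $\tilde f\in L^2$, and $\bar{\sigma}$ equal to $\sigma$ on $\{0<\tau<\delta\}$ and $1$ elsewhere.

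Testing the weak formulation against $-D_s^{-h}(\eta^2 D_s^h u^\delta)$ and using $[D_s^h,\bar{\sigma}A]=\bar{\sigma}[D_s^h,A]$, after absorbing one copy of the top-order term and sending $h\to 0$, yields
\[
\int_{\co'}\bar{\sigma}\bigl((u_{ss}^\delta)^2+(u_{s\tau}^\delta)^2\bigr)\le O(1),
\]
the commutator errors being controlled by the basic energy bound $\int_\co \bar{\sigma}|\gd u^\delta|^2\le O(1)$. To recover $u_{\tau\tau}^\delta$, observe that on each of the three subregions $\{\tau<0\}$, $\{0<\tau<\delta\}$, $\{\tau>\delta\}$ the coefficient $\bar{\sigma}$ is locally constant, so the PDE reads $\bar{\sigma}(a_{22}u_{\tau\tau}^\delta+\text{lower-order})=-\tilde f$; solving for $u_{\tau\tau}^\delta$, squaring, and multiplying by $\bar{\sigma}$ produces $\int_{\co'}\bar{\sigma}(u_{\tau\tau}^\delta)^2\le O(1)$, where the contribution $|\tilde f|^2/\bar{\sigma}$ is harmless since $\bar{\sigma}\ge 1$. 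Finally, on the portion of $\co'$ lying outside $R_{\delta_0/2}$, $u^\delta$ satisfies $-\Delta u^\delta=f$ classically in a neighborhood of uniformly positive distance to $\Gamma$, and standard interior elliptic regularity delivers $\|\gd^2 u^\delta\|_{L^2}\le O(1)$ there.

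The main obstacle is verifying that the tangential difference-quotient step is genuinely uniform in $\delta$ across the jump interfaces $\{\tau=0\}$ and $\{\tau=\delta\}$. This is where the hypotheses bite: because $\co'$ is bounded away from both $V_2$ and $\po$ and $\gamma$ is $C^2$, every geometric quantity of the normal chart (Jacobian, the matrix $A$ and its $s$-derivatives, curvature terms) stays bounded independently of $\delta$, so $[D_s^h,A]$ is $O(1)$; even after multiplication by $\bar{\sigma}=O(1/\delta)$ the resulting error terms have the form $\bar{\sigma}|\gd u^\delta|^2$, which are integrable with $O(1)$ bound by the basic energy estimate. Once this uniformity is checked carefully, the remaining steps are routine.
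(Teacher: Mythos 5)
Your proof is correct and rests on the same core mechanism as the paper's: tangential ($s$-direction) regularity in the normal coordinates $(s,\tau)$, made uniform in $\delta$ precisely because $\bar{\sigma}$ depends only on $\tau$ there. The paper tests the strong (curvilinear) form of the equation directly against $\eta^2\xi^2 u_{ss}$ and passes through the interfaces via the transmission conditions, then invokes Lemma~3.2 of Li--Wang for the remainder; your difference-quotient version is the standard rigorous rendition of the same computation, with the commutator identity $[D_s^h,\bar{\sigma}A]=\bar{\sigma}[D_s^h,A]$ playing the role of the paper's transmission-condition bookkeeping, and the recovery of $u_{\tau\tau}$ from the PDE is likewise the same step carried out explicitly.
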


\begin{proof}Assume that \(l>0\) is the arc-length of \(\Gamma\) and define \(d/6>\delta_d>0\) to be the number such that for all \(0<\delta<2\delta_d\), the truncated road \(\displaystyle R_{\delta,\frac{d}{3l}}\) is compactly contained in \(\co\backslash\{V_2\}\). For convenience, we set \(p=\frac{3}{2}\delta_d\) and \(q=\frac{d}{2l}\). Observe that \(R_\delta\cap\co'\subset R_{\delta,q}\) for \(0<\delta<\delta_d\), then it suffices to establish the estimate in the region \(\displaystyle R_{p,q}\), because the operator coefficients are smooth elsewhere.

Now, by the choice of \(\delta_d\), we have that the mapping \((s,\tau)\mapsto\Gamma(s)+\tau\vec{n}(s)\) defines a diffeomorphism from \((h+2q/3,k-2q/3)\times(-4p/3,4p/3)\) to \(R_{4p/3,2q/3}\). On \(R_{4p/3,2q/3}\), assuming that \(0<\delta<\delta_d\), one may derive from the PDE of (\ref{intcapoutoriginalequation}) that
\be
(1+\tau \kappa(s))f +\bar{\sigma}\lb\frac{u_s}{1+\tau\kappa(s)}\rb_s+\bar{\sigma}\lb(1+\tau\kappa(s))u_\tau\rb_\tau=0,\label{curvilinearform}
\ee
where \(\kappa\) is the curvature of \(\Gamma\). The above equation holds locally for \(s\in(2q/3,l-2q/3)\), and \(\tau\in(-4p/3,0)\), \((0,\delta)\) and \((\delta,4p/3)\) respectively. We also have the \emph{transmission condition}
\be
\bca
u(s,0^-)=u(s,0^+),&u_\tau(s,0^-)=\sigma u_\tau(s,0^+),\\
u(s,\delta^-)=u(s,\delta^+),& u_{\tau}(s,\delta^+)=\sigma u_{\tau}(s,\delta^-).
\eca\label{transmissioncondition}
\ee

We define two test functions
\[
\eta(\tau)=\bca
1,& |\tau|\le \delta_d,\\
0,& |\tau|>p,
\eca
\]
and
\[
\xi(s)=\bca
1,& s\in (q,l-q),\\
0,& s\in (2q/3,5q/6)\cup(l-5q/6,l-2q/3),
\eca
\]
where both \(\xi\) and \(\eta\) are bounded by 0 and 1. Applying \(\eta^2\xi^2 u_{ss}\) to (\ref{curvilinearform}), we obtain
\be
\int_{I}\eta^2\xi^2 u_{ss}(1+\tau \kappa(s))f  +\bar{\sigma}\lb\frac{u_s}{1+\tau\kappa(s)}\rb_s\eta^2\xi^2 u_{ss}+\bar{\sigma}\lb(1+\tau\kappa(s))u_\tau\rb_\tau\eta^2\xi^2 u_{ss}d\tau ds=0,
\ee
where \(I=(2q/3,l-2q/3)\times(-4p/3,4p/3)\). Notice that
\[
\begin{split}
   \int_I \bar{\sigma}\lb(1+\tau\kappa(s)u_\tau)\rb_\tau\eta^2\xi^2 u_{ss}d\tau ds&=
-\int_I \bar{\sigma}\lb(1+\tau\kappa(s)u_\tau)\rb_{\tau s}\eta^2\xi^2 u_{s}d\tau ds\\
&\quad-2\int_I \bar{\sigma}\lb(1+\tau\kappa(s)u_\tau)\rb_\tau\eta^2\xi\xi_s u_{s}d\tau ds\\
&\eqqcolon -Y_1-2Y_2.
\end{split}
\]
Then
\[
\begin{split}
  Y_1&=\int_{2q/3}^{l-2q/3}\int_{-4p/3}^{0} \lb(1+\tau\kappa(s))u_\tau\rb_{\tau s}\eta^2\xi^2 u_{s}d\tau ds\\ &\quad+\int_{2q/3}^{l-2q/3}\int_{0}^{\delta} \sigma\lb(1+\tau\kappa(s))u_\tau\rb_{\tau s}\eta^2\xi^2 u_{s}d\tau ds\\ &\quad+\int_{2q/3}^{l-2q/3}\int_{\delta}^{4p/3} \lb(1+\tau\kappa(s))u_\tau\rb_{\tau s}\eta^2\xi^2 u_{s}d\tau ds\\
  &\overset{(\ref{transmissioncondition})}{=}\int_{2q/3}^{l-2q/3}\int_{-4p/3}^{0} \lb(1+\tau\kappa(s))u_\tau\rb_{s}\xi^2\lb\eta^2 u_{s}\rb_\tau d\tau ds\\ &\quad+\int_{2q/3}^{l-2q/3}\int_{0}^{\delta} \sigma\lb(1+\tau\kappa(s))u_\tau\rb_{s}\xi^2\lb\eta^2 u_{s}\rb_\tau d\tau ds\\ &\quad+\int_{2q/3}^{l-2q/3}\int_{\delta}^{4p/3} \lb(1+\tau\kappa(s))u_\tau\rb_{s}\xi^2\lb\eta^2 u_{s}\rb_\tau d\tau ds\\
  &=\int_{2q/3}^{l-2q/3}\int_{-4p/3}^{4p/3} \bar{\sigma}\lb(1+\tau\kappa(s))u_\tau\rb_{s}\xi^2\lb\eta^2 u_{s}\rb_\tau d\tau ds,
\end{split}
\]
and
\[
\begin{split}
    Y_2&=\int_{2q/3}^{l-2q/3}\int_{-4p/3}^{0} \lb(1+\tau\kappa(s))u_\tau\rb_\tau\eta^2\xi\xi_s u_{s}d\tau ds\\ &\quad+\int_{2q/3}^{l-2q/3}\int_{0}^{\delta} \sigma\lb(1+\tau\kappa(s))u_\tau\rb_\tau\eta^2\xi\xi_s u_{s}d\tau ds\\ &\quad+\int_{2q/3}^{l-2q/3}\int_{\delta}^{4p/3} \lb(1+\tau\kappa(s))u_\tau\rb_\tau\eta^2\xi\xi_s u_{s}d\tau ds\\
     &\overset{(\ref{transmissioncondition})}{=}\int_{2q/3}^{l-2q/3}\int_{-4p/3}^{0} \lb(1+\tau\kappa(s))u_\tau\rb\xi\xi_s\lb\eta^2 u_{s}\rb_\tau d\tau ds\\ &\quad+\int_{2q/3}^{l-2q/3}\int_{0}^{\delta} \sigma\lb(1+\tau\kappa(s))u_\tau\rb\xi\xi_s\lb\eta^2 u_{s}\rb_\tau d\tau ds\\ &\quad+\int_{2q/3}^{l-2q/3}\int_{\delta}^{4p/3} \lb(1+\tau\kappa(s))u_\tau\rb\xi\xi_s\lb\eta^2 u_{s}\rb_\tau d\tau ds\\
  &=\int_{2q/3}^{l-2q/3}\int_{-4p/3}^{4p/3} \bar{\sigma}\lb(1+\tau\kappa(s))u_\tau\rb\xi\xi_s\lb\eta^2 u_{s}\rb_\tau d\tau ds.
\end{split}
\]
The rest of the proof is covered by that of Lemma (3.2) in \citep{Li2020}.
\end{proof}

\begin{lem}
We have that the limit \(u^\ast\in H_\Gamma^{1,0}(\co)\), and
\be
\int_\co |\gd u^\ast|^2+a\int_{\Gamma} (u_s^\ast)^2\lesssim_{\co} \int_\co f^2.
\ee
\label{L2gammalemma}
\end{lem}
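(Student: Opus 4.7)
My plan is to leverage the energy identity from the weak formulation together with the hypothesis \(\sigma\delta\to a\), and then extract an \(H^1\)-bound on the transverse average of \(u^\delta\) across the road. Testing (\ref{intcapoutoriginalequation}) against \(u^\delta\) itself and applying Poincar\'e's inequality on \(\co\) (which is Lipschitz) delivers the enhanced energy estimate
\[
\int_\co|\gd u^\delta|^2 + \sigma\int_{R_\delta^+}|\gd u^\delta|^2 \lesssim_\co \int_\co f^2.
\]
Weak lower semicontinuity in \(H_0^1(\co)\) then yields \(\int_\co|\gd u^\ast|^2\lesssim_\co\int_\co f^2\), and it remains to produce an \(H^1\) trace of \(u^\ast\) along \(\gamma\) with the corresponding bound on \((u_s^\ast)^2\).

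To this end, I would introduce, in the curvilinear coordinates \((s,\tau)\) from the Introduction, the transverse average
\[
\bar{u}^\delta(s)\coloneqq\frac{1}{\delta}\int_0^\delta u^\delta(s,\tau)\,d\tau, \qquad s\in(0,l).
\]
By Jensen's inequality applied to \((\bar u^\delta)_s=\frac{1}{\delta}\int_0^\delta u_s^\delta\,d\tau\) and the enhanced energy estimate,
\[
\int_0^l|(\bar{u}^\delta)_s|^2\,ds\le \frac{1}{\delta}\int_{R_\delta^+}|u_s^\delta|^2 = \frac{1}{\sigma\delta}\cdot\sigma\int_{R_\delta^+}|u_s^\delta|^2\lesssim \frac{1}{a}\int_\co f^2,
\]
after absorbing the Jacobian \(1+\tau\kappa(s)=1+O(\delta)\). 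A parallel \(L^2\)-bound for \(\bar u^\delta\) follows from \(\|u^\delta\|_{H^1(\co)}\lesssim\|f\|_{L^2}\) and the trace theorem on \(R_\delta^+\). Up to a subsequence, \(\bar u^\delta\rightharpoonup w\) in \(H^1(0,l)\). To identify \(w\) with the trace of \(u^\ast\), I would combine the elementary bound
\[
|u^\delta(s,0)-\bar{u}^\delta(s)|^2\le \delta\int_0^\delta |u_\tau^\delta(s,\tau)|^2\, d\tau,
\]
which integrates (using \(\int_{R_\delta^+}|u_\tau^\delta|^2\lesssim \delta/a\)) to \(\|u^\delta|_\gamma-\bar u^\delta\|_{L^2(\gamma)}\to 0\), with the compactness of the trace map \(H^1(\co)\to L^2(\gamma)\), which gives \(u^\delta|_\gamma\to u^\ast|_\gamma\) strongly in \(L^2(\gamma)\). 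These two facts force \(w=u^\ast|_\gamma\), and weak lower semicontinuity in \(H^1(0,l)\) then yields
\[
a\int_\Gamma(u_s^\ast)^2\,ds\le \liminf_{\delta\to 0}\;a\int_0^l|(\bar u^\delta)_s|^2\,ds\lesssim_\co\int_\co f^2.
\]

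The principal obstacle I anticipate is that the normal parametrization \((s,\tau)\) from \citep{trudinger} is only well-defined away from the interior endpoint \(V_2\); near \(V_2\) the road is rounded off and the definition of \(\bar u^\delta\) needs a small adjustment. I would handle this by performing the averaging on \((0,l-c\delta)\) for a fixed small \(c>0\) and absorbing the remaining end piece into an \(O(\delta)\)-error controlled by the uniform \(H^1\)-bound on \(u^\delta\); since the final estimate is an \(L^2(\Gamma)\)-bound on \(u_s^\ast\), a single vanishing endpoint is irrelevant. A secondary bookkeeping point is that the tangential component of the gradient in the energy is \(u_s^\delta/(1+\tau\kappa(s))\) rather than \(u_s^\delta\), but the curvature correction is \(O(\delta)\) and therefore only affects constants. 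Once these technicalities are cleared, the combination of the two lower-semicontinuity estimates gives both \(u^\ast\in H_\Gamma^{1,0}(\co)\) and the claimed inequality.
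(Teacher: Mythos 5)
Your argument is correct, and it takes a genuinely different route from the paper's. The paper's proof leans heavily on the second-order a priori estimate of Lemma (\ref{D2estimate}): it uses the uniform $H^2$ bound on the truncated road to replace $\sigma\int_{R_\delta^+}|u_s^\delta|^2$ by $\sigma\delta\int(u_s^\delta)^2(s,0^+)\,ds$ up to $o(1)$, and then invokes weak $H^1$-convergence of the traces $u^\delta|_\gamma$ (again a by-product of the $H^2$ bound and the trace theorem) together with weak lower semicontinuity. You sidestep the $H^2$ estimate entirely. Your transverse average $\bar u^\delta$ together with Jensen's inequality gives a uniform $H^1(0,l)$-bound directly from the first-order energy inequality $\sigma\int_{R_\delta^+}|\gd u^\delta|^2\lesssim\int f^2$; the one-dimensional Poincar\'e/Cauchy–Schwarz estimate $|u^\delta(s,0)-\bar u^\delta(s)|^2\le\delta\int_0^\delta|u_\tau^\delta|^2\,d\tau$ plus the energy bound then forces $\|u^\delta|_\gamma-\bar u^\delta\|_{L^2(\gamma)}\to0$, and compactness of the trace map $H^1(\co)\to L^2(\gamma)$ identifies the weak $H^1(0,l)$ limit of $\bar u^\delta$ with $u^\ast|_\gamma$. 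The pay-off is a proof that is self-contained at the $H^1$ level; the cost is that it gives less pointwise information about $u^\delta$ near $\gamma$, which the paper's $H^2$-based argument reuses later in the derivation of the EBC itself. Both approaches must truncate near the interior endpoint $V_2$ (where the normal coordinate system breaks down) and remove the truncation at the end---the paper by monotone convergence, you by an $O(\delta)$ error estimate---so there is no real difference there.
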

\begin{proof}
Recall that we have
\be
\int_{\co} \bar{\sigma} |\gd u^\delta|^2 \lesssim_{\co} \int_{\co} f^2.\label{ebcenergyestimateinproof}
\ee
Because \(u^\delta\) weakly converges to \(u^\ast\) in \(H_0^1(\co)\), we have
\be
\liminf_{\delta\rta0}\int_{\co} |\gd u^\delta|^2 \ge \int_\co |\gd u^\ast|^2.\label{ebcenergyestimateinproof1}
\ee
Moreover, we have
\be
    (\sigma-1)\int_{R_\delta} |\gd u^\delta|^2 \gtrapprox \sigma\int_{q}^{l-q}\int_0^\delta (u_s^\delta)^2+(u_\tau^\delta)^2 d\tau ds, \label{interiorboundarytermestimate}
\ee
with \(0<\delta<\delta_q\). According to Lemma (\ref{D2estimate}), we have
\[
\begin{split}
    \sigma\lw\int_{q}^{l-q}\int_0^\delta (u_s^\delta)^2(s,\tau)-(u_s^\delta)^2(s,0^+) d\tau ds\rw&\le \sigma\lw\int_{q}^{l-q}\int_0^\delta (u_s^\delta(s,\tau)-u_s^\delta(s,0^+))^2 d\tau ds\rw\\ &\quad+\sigma\lw\int_{q}^{l-q}\int_0^\delta2u_s^\delta(s,0^+)(u_s^\delta(s,\tau)-u_s^\delta(s,0^+)) d\tau ds\rw\\
    &= Q_1+Q_2, 
\end{split}
\]
where 
\[
\begin{split}
    Q_1&\le \sigma\lw\int_{q}^{l-q}\int_0^\delta \lb\int_0^\delta u_{s\tau}^\delta d\tau\rb^2 d\tau ds\rw\\
       &\le O(\delta^{3/2}),
\end{split}
\]
and
\[
\begin{split}
    Q_2&\le 2\sigma\lb\int_{q}^{l-q}\int_0^\delta (u_s^\delta(s,0^+))^2 d\tau ds\rb^{1/2} \lb\int_{q}^{l-q}\int_0^\delta(u_s^\delta(s,\tau)-u_s^\delta(s,0^+))^2  d\tau ds\rb^{1/2}\\
    &\le C \delta^{1/2} \lb\sigma^2\int_{q}^{l-q}\int_0^\delta\lb\int_0^\delta u_{s\tau}^\delta d\tau\rb^2  d\tau ds\rb^{1/2}\\
    &\le O(\delta).
\end{split}
\]
Here we have used trace theorem. Similar things still occur if we replace \(u_s^\delta\) by \(u_\tau^\delta\) in (\ref{interiorboundarytermestimate}), and thus, we have
\[
 (\sigma-1)\int_{R_\delta} |\gd u^\delta|^2 \gtrapprox \sigma\delta\int_{q}^{l-q} (u_s^\delta)^2(s,0)ds+\delta\int_{q}^{l-q}(u_\tau^\delta)^2(s,0^-) d\tau ds +o(1).
\]
Because by Lemma (\ref{D2estimate}), \(u^\delta\) converges weakly to \(u^\ast\) in \(H_0^1(\co')\) for any \(\co'\csubset\co\) without crossing \(\Gamma\), according to trace theorem, \(u^\delta\) converges weakly to \(u^\ast\) in \(H^1(\overline{\co'}\cap\Gamma)\), which leads to the following inequality
\be
\liminf_{\delta\rta0} \sigma\delta\int_{q}^{l-q} (u_s^\delta)^2(s,0)ds  \ge a\int_{q}^{l-q} (u_s^\ast)^2(s,0)ds.\label{ebcenergyestimateinproof2}
\ee
Combining (\ref{ebcenergyestimateinproof}), (\ref{ebcenergyestimateinproof1}) and (\ref{ebcenergyestimateinproof2}), we have
\[
\int_\co |\gd u^\ast|^2+a\int_{q}^{l-q} (u_s^\ast)^2ds\lesssim_{\co} \int_\co f^2,
\]
and by using monotone convergence theorem, we are done.
\end{proof}

\begin{lem}
For every \(\eta\in C_0^\infty(\co)\), we have the convergence
\be
\int_\co \bar{\sigma}\gd u\cdot\gd\eta \overset{\delta\rta0}{\longrightarrow} \int_{\co} \gd u^\ast\cdot \gd\eta + a \int_{\Gamma} u_s^\ast \eta_s.\label{2.14}
\ee
\end{lem}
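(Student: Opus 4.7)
The plan is to split $\bar{\sigma} = 1 + (\sigma-1)\chi_{R_\delta^+}$ so that
\[
\int_\co \bar{\sigma}\gd u^\delta\cdot\gd\eta = \int_\co \gd u^\delta\cdot\gd\eta + (\sigma-1)\int_{R_\delta^+}\gd u^\delta\cdot\gd\eta.
\]
The first piece converges to $\int_\co \gd u^\ast\cdot\gd\eta$ by the weak $H_0^1(\co)$ convergence $u^\delta\weakcv u^\ast$. For the second, the energy estimate $\sigma\|\gd u^\delta\|_{L^2(R_\delta^+)}^2\lesssim 1$ shows that replacing $(\sigma-1)$ by $\sigma$ costs only $O(\sqrt{\delta/\sigma}) = o(1)$. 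Switching to the curvilinear coordinates $(s,\tau)\in (0,l)\times(0,\delta)$ adapted to $\gamma$, the task reduces to showing that the tangential piece $\sigma\int_0^l\int_0^\delta(1+\tau\kappa)^{-1}u_s^\delta\eta_s\, d\tau\, ds$ tends to $a\int_\Gamma u_s^\ast\eta_s$, while the normal piece $\sigma\int_0^l\int_0^\delta(1+\tau\kappa)u_\tau^\delta\eta_\tau\, d\tau\, ds$ tends to zero.

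For the tangential convergence, fix $q>0$ and restrict to the truncated road $R_{\delta,q}$ where Lemma \ref{D2estimate} applies. Smoothness of $\eta$ and $\kappa$ gives $\eta_s(s,\tau)/(1+\tau\kappa) = \eta_s(s,0) + O(\tau)$, whose contribution is $O(\sqrt{\sigma\delta^3}) = o(1)$ by Cauchy-Schwarz against the energy estimate. The $H^2$-bound $\int_{R_{\delta,q}}|\gd^2 u^\delta|^2\lesssim 1/\sigma$ from Lemma \ref{D2estimate}, combined with $u_s^\delta(s,\tau) - u_s^\delta(s,0^+) = \int_0^\tau u_{s\tau}^\delta\, d\tau'$, yields $\|u_s^\delta(\cdot,\tau)-u_s^\delta(\cdot,0^+)\|_{L^2(q,l-q)}^2\lesssim \delta/\sigma$, so $\sigma\int_0^\delta u_s^\delta\, d\tau = (\sigma\delta)u_s^\delta(s,0^+) + o(1)$ in $L^2(q,l-q)$. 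Combined with $\sigma\delta\to a$ and the weak $L^2(q,l-q)$ convergence of the trace $u_s^\delta(\cdot,0^+)\weakcv u_s^\ast(\cdot,0)$ (via Lemma \ref{D2estimate} and the trace theorem on $R_{p,q}$), this yields tangential convergence on $(q,l-q)$; sending $q\to 0$ uses the $L^2(\Gamma)$-control on $u_s^\ast$ from Lemma \ref{L2gammalemma} together with monotone convergence.

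The normal piece is handled by the same scheme: replacing $\eta_\tau(s,\tau)(1+\tau\kappa)$ by $\eta_\tau(s,0)$ with $O(\tau)$ error reduces matters to $\sigma\int_0^l\eta_\tau(s,0)\int_0^\delta u_\tau^\delta\, d\tau\, ds$. The same $H^2$ argument yields $\sigma\int_0^\delta u_\tau^\delta\, d\tau = (\sigma\delta)u_\tau^\delta(s,0^+) + o(1)$ in $L^2(q,l-q)$, and the transmission condition $\sigma u_\tau^\delta(s,0^+) = u_\tau^\delta(s,0^-)$ converts this to $\delta\,u_\tau^\delta(s,0^-)$, which is $O(\delta)\to 0$ since the exterior normal trace $u_\tau^\delta(\cdot,0^-)$ is uniformly bounded in $L^2(q,l-q)$ (again by Lemma \ref{D2estimate} and trace). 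The main obstacle throughout is the failure of Lemma \ref{D2estimate} at the interior endpoint $V_2$, where $\eta$ need not vanish; this forces the truncation parameter $q>0$ and a final cutoff argument to remove it, justified by the $L^2(\Gamma)$-integrability from Lemma \ref{L2gammalemma} and the compact support of $\eta$.
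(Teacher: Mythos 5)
Your proposal is correct and takes essentially the same approach as the paper's: split off $\int_\co \gd u^\delta\cdot\gd\eta$ (handled by weak convergence), truncate the road away from the interior endpoint $V_2$, use the weighted $H^2$ estimate of Lemma \ref{D2estimate} and the transmission conditions to pass to the limit on the truncated road, and remove the truncation using the $L^2(\Gamma)$ control from Lemma \ref{L2gammalemma} and the compact support of $\eta$. You simply make explicit the tangential/normal decomposition in curvilinear coordinates and the quantitative steps (e.g.\ $\sigma\int_0^\delta u_s^\delta\,d\tau = (\sigma\delta)u_s^\delta(\cdot,0^+)+o(1)$ in $L^2(q,l-q)$) that the paper compresses into ``by Lemma \ref{D2estimate} and transmission conditions.''
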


\begin{proof}
It suffices to consider the limit of the following quantity for \(n>1\) and \(0<\delta<\delta_{1/n}\)
\[
\begin{split}
    \sigma\int_{R_\delta} \gd u\cdot \gd\eta&=\sigma\int_{R_{\delta,1/n}} \gd u\cdot \gd\eta + O\lb\sigma\int_{R_{\delta}\cap B(V_2,3/n)}|\gd u||\gd\eta|\rb\\
                                        &= I_1 +I_2.
\end{split}
\]
Notice that \(I_1\) converges to \(a\int_{h+1/n}^{k-1/n} u_s^\ast\eta_s\) as \(\delta\rta0\) by Lemma (\ref{D2estimate}) and transmission conditions. Be careful that this part requires the \(C^2\) smoothness of the arc.

Moreover, the error term satisfies
\[
\begin{split}
I_2 &\lesssim \sigma\lb\int_{R_{\delta}\cap B(V_2,3/n)} |\gd u|^2\rb^{1/2} \lb\int_{R_{\delta}\cap B(V_2,3/n)} |\gd \eta|^2\rb^{1/2}\\
    &\lesssim \lb\sigma\delta\cdot6/n \rb^{1/2} \norm{\eta}_{C_0^1(\co)}\\
    &=O\lb\sqrt{\frac{1}{n}}\rb.
\end{split}
\]
Using Lemma (\ref{L2gammalemma}), we know, by Lebesgue Dominated Convergence Theorem, that after sending \(n\rta\infty\), \(a\int_{1/n}^{l-1/n} u_s^\ast\eta_s\) will converge to \(a\int_\Gamma u_s^\ast\eta_s\), and in effect, we obtain the convergence (\ref{2.14}).
\end{proof}

\noindent\textbf{Remark}: The setting for a general pattern is quite similar to the above one. Associating each arc \(\gamma\in\mathcal{G}\) with a road family \(R_\delta^{\gamma}\) as defined before, we consider the union \(R_\delta(=R_\delta^{\mathcal{G}})=\cup_{\gamma\in\mathcal{G}}R_\delta^{\gamma} \), and call this a \emph{road net} with width \(\delta\). The \emph{truncated} road net will be simply the union of truncated roads. The truncation method used in the case with one arc having end points one on \(\po\) and one in \(\co\) above can be immediately applied to other cases with different patterns. Without a regularity requirement on the pattern, the following convergence result is always true, provided that each arc is \(C^2\):
\[
\sigma\int_{R_\delta^{\mathcal{G}}\cap\{\tau>0\}}  \gd u\cdot\gd\eta\overset{\delta\rta 0}{\longrightarrow} a \sum_{\gamma\in\mathcal{G}}\int_{\gamma} u_s^\ast \eta_s,
\]
where \(u^\ast\) is the weak limit of \(u\) in \(H_0^1(\co)\) and \(\eta\in C_0^\infty(\co)\). (The tangential derivative of \(u^\ast\) on the arc is well-defined because of Lemma (\ref{D2estimate}))

\subsection{Specifications on the EBCs on Patterns}

By the above subsection, we obtain an integral equation for the limit \(u^\ast\in H_\Gamma^{1,0}(\co)\), that
\be
\int_{\co} \gd u^\ast\cdot\gd\eta +a \sum_{\gamma\in\mathcal{G}} \int_{\gamma} u_s^\ast\eta_s =\int_\co f\eta,\label{distributionalintegralequation}
\ee
for all \(\eta\in C_0^\infty(\co)\). This result does not require the pattern \(\mathcal{G}\) to be regular. 

By the Density Result, we know that if the pattern is regular, then the above equation holds for all \(\eta\in H_\Gamma^{1,0}(\co)\), which gives both the uniqueness and existence for \(u^\ast\). We claim that in this situation the PDE of \(u^\ast\) takes the form
\be
\bca
-\Delta u^\ast (\x) = f (\x), &\x\in\co\backslash\Gamma,\\
u^\ast(\x) \text{ satisfies \emph{Road Pattern Condition}},&\x\in \Gamma,\\
\restr{u}{\po}\equiv0,&
\eca\label{theebcpde}
\ee
where the Road Pattern Condition is described in the following list:
\begin{itemize}
    \item[1.] In the interior of each arc \(\gamma\in\mathcal{G}\), if \(s\) represents a unit speed representation and \(\vec{n}\) a unit normal field on \(\gamma\), then locally the domain is split into two by the arc, and we may call the one directed by \(\vec{n}\) positive, and another negative. In this arc, \(\restr{u}{\gamma}\) pointwise satisfies \(u^+=u^-\) and \(a u_{ss}= \frac{\partial u^-}{\partial\vec{n}}-\frac{\partial u^+}{\partial\vec{n}}\);
    \item[2.] \(\restr{u}{\gamma}(V)=0\) for all \(V\in\gamma\cap\po\);
    \item[3.] If \(V\) is contained in \(\co\), and \(\gamma,\gamma'\in\mathcal{G}\) are two arcs that end at it, then \(\restr{u}{\gamma}(V)=\restr{u}{\gamma'}(V)\);
    \item[4.] For \(V\in\co\), if \(\mathcal{G}_V\) is the collection of arcs that end at it, then we have
    \[
    \sum_{\gamma\in\mathcal{G}_V} u_{s_\gamma}(V)=0,
    \]
    where \(s_\gamma\) is the arc-length parametrization starting from \(V\);
    \item[5.]According to elementary geometry, by introducing several ``ghost" curves, the pattern \(\mathcal{G}\) (if it is regular, but it still hold even if it is not regular near nodes on the outer boundary \(\po\)) will split a \(C^1\) subdomain \(\co'\csubset \co\) into several piecewise \(C^1\) subdomains. On these ghost curves, the solution is considered to satisfy ``no-effect" condition: \(u^+=u^-\), and \(\frac{\partial u^-}{\partial\vec{n}}\equiv\frac{\partial u^+}{\partial\vec{n}}\). By classical elliptic theory, we know that such boundary condition has no effect, and the solution is simply smooth across the corresponding boundary, which implies that the choice of ``ghost" curves is not intrinsic in studying the problem.
\end{itemize}

Let \(v\) be some piecewise \(C^2\) and continuous function on \(\co\) that satisfies (\ref{theebcpde}). Multiplying \(\eta\in C_0^\infty(\co)\) to \(-\Delta v=0\), and do integration by parts on each subdomain obtained by splitting \(supp\{\eta\}\csubset\co_\eta\csubset\co\) using \(\mathcal{G}\) and some ``ghost" curves described in 5.. This will give
\[
\int_{\co} \gd v\cdot\gd\eta - \sum_{\gamma\in\mathcal{G}\cup\{\text{``Ghost" curves}\}}\int_{\gamma} \eta\lb\frac{\partial v^-}{\partial\vec{n}}-\frac{\partial v^+}{\partial\vec{n}}\rb= \int_\co f \eta.
\]
Noticing that the second term in the left hand side equals
\[
\begin{split}
   \sum_{\gamma\in\mathcal{G}} \int_{\gamma} \eta\lb\frac{\partial v^-}{\partial\vec{n}}-\frac{\partial v^+}{\partial\vec{n}}\rb&= a \sum_{\gamma\in\mathcal{G}}\int_{\gamma} v_{ss} \eta\\
   &=-a \sum_{\gamma\in\mathcal{G}}\lb\int_{\gamma} v_{s} \eta_s + v_s(V_2^\gamma)\eta(V_2^\gamma)- v_s(V_1^\gamma)\eta(V_1^\gamma)\rb\\
   &=-a\sum_{\gamma\in\mathcal{G}}\int_{\gamma} v_{s} \eta_s  + 0.
\end{split}
\]
Therefore, formally we derive that \(v\) satisfies (\ref{distributionalintegralequation}).

\section{Derivation of the Ultimate Effective Model}
From now on, we assume that \(\mathcal{G}\) is regular on the flat torus \(\mathbb{T}^2\), and all \(\mathcal{G}\), \(\Gamma_1\), \(R_\delta\) and \(R_\delta^+\) are considered as distributed periodically on \(\R^2\). Notice that the definition of \(\Gamma_1\), \(R_\delta\) and \(R_\delta^+\) are dependent of the choice of coordinate system on the plane.

\subsection{Method I: a traditional treatment}
We start with the traditional method I. It is clear that for each \(\delta\), \(A_\delta^\epsilon(\x)=\bar{\sigma}(\epsilon^{-1}\x)I_{2\times2}\) is uniformly elliptic with lower bound 1 and upper bound \(\sigma=O(1/\delta)\). Therefore, we may directly apply classical results, and obtain a model \(u^{\delta,0}\) as \(\epsilon\rta0\), which satisfies
\be
\bca
-\gd\cdot(\Sigma_\delta\gd u^{\delta,0})=f,&\\
u^{\delta,0}\in H_0^1(\co),&
\eca
\ee
where \(\Sigma_\delta\) is a constant positive definite matrix. Furthermore, we may write
\be
(\Sigma_\delta)_{k,l}=\int_{\msquare} \bar{\sigma}(\x) (\gd w_k+\vec{e}_k)\cdot \vec{e}_l,\,k,l=1,2,
\ee
with \(w_k\) satisfying
\be
\bca
\gd\cdot(\bar{\sigma}(\gd w_k+\vec{e}_k))=0, &\\
w_k\text{ is 1-periodic and }\int_{\msquare}w_k=0.&
\eca\label{correctorequation}
\ee
Here we will call \(w_k\)'s \emph{correctors}, and (\ref{correctorequation}) will be called \emph{corrector equations}. The proper function space \(H_{per}^1(\msquare)\) for the above equation should be the natural completion of 1-periodic \(C_b^\infty(\R^2)\) functions under the norm of \(H^1(\msquare)\). Notice that this function space is different from \(H^1(\msquare)\).

With this function space, by classical theories, we define the \emph{weak solution} to (\ref{correctorequation}) to be the element \(w_k\in H_{per}^1(\msquare)\) satisfying
\be
\int_{\msquare} \bar{\sigma}\gd v\cdot (\gd w_k +\vec{e}_k)=0,\label{weaksolutiontoeffectivecorrector}
\ee
for every \(v\in H_{per}^1(\msquare)\) and \(\int_{\msquare} w_k =0\). According to Riesz representation theorem, we have the existence and uniqueness of a weak solution \(w_k\).

To understand the convergence of \(u^{\delta,0}\) as \(\delta\rta0\), it is needed to establish uniform ellipticity of the tensor family \(\{A_\delta\}_{\delta>0}\). The latter one is determined by the correctors, and thus analysing the corrector equations becomes a crucial problem that opens our further analysis.

\begin{lem}
For \(k=1,2\), we have
\be
\int_{\msquare} \bar{\sigma} |\gd w_k|^2\le O(1),
\ee
with the bound independent of small \(\delta>0\).\label{h1estimatefordeltacorrector}
\end{lem}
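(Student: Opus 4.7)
The plan is to run the standard energy estimate: test the weak equation (\ref{weaksolutiontoeffectivecorrector}) against $v = w_k$ itself. Since $w_k \in H_{per}^1(\msquare)$ is admissible, we obtain
\[
\int_{\msquare} \bar{\sigma} |\gd w_k|^2 = -\int_{\msquare} \bar{\sigma} \gd w_k \cdot \vec{e}_k.
\]
Applying Cauchy--Schwarz with the natural weighted inner product $\langle F, G\rangle_{\bar{\sigma}} = \int \bar{\sigma}\, F\cdot G$ gives
\[
\int_{\msquare} \bar{\sigma} |\gd w_k|^2 \le \left(\int_{\msquare} \bar{\sigma} |\gd w_k|^2\right)^{1/2} \left(\int_{\msquare} \bar{\sigma}\right)^{1/2},
\]
so after dividing through (handling the trivial case $\gd w_k \equiv 0$ separately) it suffices to control $\int_{\msquare} \bar{\sigma}$ independently of $\delta$.

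For this second step I would use the explicit structure of $\bar{\sigma}$: it equals $\sigma = O(1/\delta)$ on $R_\delta^+$ and $1$ elsewhere. By construction, $R_\delta^+ \subset R_\delta$ is contained in a $\delta$-tubular neighborhood of the finite union of $C^2$ curves $\Gamma_1 \cap \msquare$, so its two-dimensional Lebesgue measure is bounded by $C\delta$ for some constant $C$ depending only on the total length and regularity of $\mathcal{G}$ (and small enough $\delta$ that the normal coordinates $(s,\tau)$ are a diffeomorphism onto each $R_\delta^\gamma$). Therefore
\[
\int_{\msquare} \bar{\sigma} \le \sigma \cdot |R_\delta^+| + |\msquare \setminus R_\delta^+| \le O(1/\delta)\cdot O(\delta) + 1 = O(1).
\]
Combining the two displays yields the claimed bound $\int_{\msquare} \bar{\sigma} |\gd w_k|^2 \le O(1)$.

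There is no serious obstacle here. The only mildly delicate point is the justification that $w_k$ itself is a valid test function, which is immediate from the choice of function space $H_{per}^1(\msquare)$, and the measure estimate $|R_\delta^+| = O(\delta)$, which follows from the area formula applied in each curvilinear chart $(s,\tau) \in (0,l_\gamma)\times(0,\delta)$ since the Jacobian $1+\tau\kappa(s)$ is uniformly bounded above for small $\delta$. This lemma is exactly the $\delta$-uniform energy bound needed downstream to extract a weak limit of $w_k$ (and hence of $\Sigma_\delta$) as $\delta \to 0$.
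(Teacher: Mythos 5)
Your proof is correct and follows essentially the same route as the paper: test the weak formulation against $v = w_k$, absorb the cross term into the left-hand side, and invoke $\int_{\msquare}\bar{\sigma} = O(1)$ from $\sigma\delta \to a$ together with $|R_\delta^+| = O(\delta)$. The only cosmetic difference is that you absorb via weighted Cauchy--Schwarz where the paper uses Young's inequality $|ab|\le \epsilon a^2 + b^2/(4\epsilon)$, and you spell out the measure estimate $\int_{\msquare}\bar{\sigma}=O(1)$ which the paper leaves implicit; both are standard and equivalent.
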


\begin{proof}
According to (\ref{weaksolutiontoeffectivecorrector}), we have by setting \(v=w_k\),
\[
\int_{\msquare} \bar{\sigma} |\gd w_k|^2 = \int_{\msquare} \bar{\sigma} \gd w_k \cdot\vec{e}_k.
\]
Using the inequality \(|ab|\le \epsilon a^2+ \frac{b^2}{4\epsilon} \) for every \(\epsilon,a,b>0\), we know that
\[
\int_{\msquare} \bar{\sigma} |\gd w_k|^2 \le 8 \int_{\msquare} \bar{\sigma} = O(1).
\]
\end{proof}

Now, we are ready to discuss the limit of \(w_k\). After passage to a subsequence, we know that \(w_k\) will weakly converge to some \(\hat{w}_k\) in \(H_{per}^1(\msquare)\) as \(\delta\rta0\). To see the equation that \(\hat{w}_k\) satisfies, we focus on one representative cell \(\msquare=(0,1)^2+\mathbf{\lambda}\) with \(\mathbf{\lambda}\in\R^2\), and define
\[
\Tilde{w}_k=w_k+x_k,\,k=1,2.
\]
Clearly, by Lemma (\ref{h1estimatefordeltacorrector}), \(\Tilde{w}_k\) is also uniformly bounded in \(H^1(\msquare)\), and so as \(\delta\rta0\), after passing to a subsequence, it will converge to some \(\bar{w}_k=\hat{w}_k+x_k\). Moreover, it satisfies for every \(\x\in \msquare\),
\[
\gd\cdot(\bar{\sigma}\gd \Tilde{w}_k)(\x)=0.
\]
Writing the above equation in the sense of weak solutions, we have
\[
\int_{\msquare}\bar{\sigma} \gd \eta\cdot \gd \Tilde{w}_k =0, 
\]
for every \(\eta\in C_0^\infty(\msquare)\). By previous work, we know that as \(\delta\rta0\), the above equation becomes
\[
\int_{\msquare} \gd \eta\cdot\gd \bar{w}_k + a \int_{\Gamma_1} \eta_s (\bar{w}_k)_s =0.
\]
Varying \(\mathbf{\lambda}\), this equation forces \(\bar{w}_k\) to satisfy the following integral equation
\be
\int_{\msquare} \gd v\cdot\gd \bar{w}_k + a \int_{\Gamma_1} v_s (\bar{w}_k)_s =0,\label{3.6}
\ee
where \(v\) is any 1-periodic \(C_b^1(\R^2)\) function, and thus \(\hat{w}_k\) must satisfy
\be
\int_{\msquare} \gd v\cdot\gd \hat{w}_k + a \int_{\Gamma_1} v_s (\hat{w}_k)_s+ a \int_{\Gamma_1} v_s (x_k)_s=0.\label{effectivecorrectorintegalequation}
\ee
To understand the above equality, a proper function space \(Z_{\Gamma_1}^{1,per}(\msquare)\) for \(\hat{w}_k\) should be defined:
\[
Z_{\Gamma_1}^{1,per}(\msquare)\coloneqq \lma v\in H_{per}^1(\msquare);\;\restr{v}{\gamma}\in H^1(\gamma),\forall \gamma\in\mathcal{G}\text{ and }\int_{\msquare}v = 0\rma,
\]
on which we introduce a new inner product
\[
(u,v)_{Z_{\Gamma_1}^{1,per}(\msquare)}\coloneqq \int_{\msquare} \gd u\cdot \gd v + a \int_{\Gamma_1} u_s v_s .
\]
After slight modifications of the density result in section (2.2), we see that \(C_b^1(\R^2)\hookrightarrow Z_{\Gamma_1}^{1,per}(\msquare)\) incarnates a dense subset, which ensures that equation (\ref{effectivecorrectorintegalequation}) holds also for \(v\in Z_{\Gamma_1}^{1,per}(\msquare)\). The proper PDE interpretation of the integral equation has already been descibed in Theorem (\ref{1.1}).

By the PDE of \(\hat{w}_k\), it is immediate to see that \(v=0\) is not a solution to the above equation once there is a nonlinear arc in \(\mathcal{G}\), because on this arc the term \((x_k)_{ss}\) is not 0. Moreover, we have uniqueness of weak solutions in \(Z_{\Gamma_1}^{1,per}(\msquare)\) by the integral equation (\ref{effectivecorrectorintegalequation}).

With the convergence of \(w_k\) as \(\delta\rta0\) well-understood, we are able to discuss the convergence of effective tensors \(\Sigma_\delta\). Recalling the formula (\(k,l=1,2\))
\[
\begin{split}
    (\Sigma_\delta)_{k,l}&=\int_{\msquare} \bar{\sigma}(\x) (\gd w_k+\vec{e}_k)\cdot \vec{e}_l d\x\\
&=\int_{\msquare} \bar{\sigma}(\x) \gd \Tilde{w}_k(\x)\cdot\vec{e}_l d\x\\
&=\int_{\msquare} (\Tilde{w}_k)_{x_l}(\x) d\x + (\sigma-1)\int_{R_\delta}\gd\Tilde{w}_k(\x)\cdot\vec{e}_l d\x\\
&=1 + Q,
\end{split}
\]
we then only have to discuss the convergence of \(Q\). Observe that
\[
\begin{split}
    Q&\sim \sigma\int_{R_\delta} \gd \Tilde{w}_k \cdot\vec{e}_l d\x\\
     &=\sigma \int_{R_\delta} \lb \frac{(\Tilde{w}_k)_s}{1+\tau\kappa(s)} \vec{T} + (\Tilde{w}_k)_\tau \vec{n} \rb\cdot \vec{e}_l d\x\\
     &=\sigma\int_0^l\int_0^\delta (\Tilde{w}_k)_s(\vec{T}\cdot \vec{e}_l)+  (\Tilde{w}_k)_\tau(\vec{n}\cdot \vec{e}_l )(1+\tau \kappa(s))d\tau ds\\
     &= \sigma \int_0^l\int_0^\delta (\Tilde{w}_k)_s(s,0^+)(\vec{T}\cdot \vec{e}_l) d\tau ds+o(1)\\
     &\overset{\delta\rta0}{\longrightarrow} a \int_{\Gamma_1} (\bar{w}_k)_s(\vec{T}\cdot \vec{e}_l) ds.
\end{split}
\]
Here \(\vec{T}\) and \(\vec{n}\) are the unit tangent vector field and unit outer normal field induced by \(\Gamma_1\) defined in \(R_\delta\) for small \(\delta>0\), and we have used Lemma (\ref{D2estimate}), transmission conditions (\ref{transmissioncondition}) and trace theorem. Therefore, we arrive at the limit \(\Sigma_0\) of \(\Sigma_\delta\) as \(\delta\rta0\), which is
\be
\begin{split}
    (\Sigma_0)_{kl}&=\delta_{kl}+a \int_{\Gamma_1} (\bar{w}_k)_s(\vec{T}\cdot \vec{e}_l) ds\\
               &=\delta_{kl}+a \int_{\Gamma_1} (\bar{w}_k)_s(x_l)_s ds.
\end{split}
\ee

\begin{lem}
\(\Sigma_0\) is always positive definite.
\end{lem}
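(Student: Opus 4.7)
The plan is to recast the quadratic form $\xi^T \Sigma_0 \xi$ as a non-negative energy by exploiting the integral equation (\ref{3.6}) satisfied by each corrector $\bar{w}_k$, and then rule out the degenerate case by a rigidity argument. Fix $\xi = (\xi_1,\xi_2) \in \R^2 \setminus \{\mathbf{0}\}$ and set
\[
W \coloneqq \xi_1 \bar{w}_1 + \xi_2 \bar{w}_2 = \hat{W} + \phi_\xi,
\]
where $\hat{W} \coloneqq \xi_1 \hat{w}_1 + \xi_2 \hat{w}_2 \in Z_{\Gamma_1}^{1,per}(\msquare)$ and $\phi_\xi(\x) \coloneqq \xi_1 x_1 + \xi_2 x_2$. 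By linearity of (\ref{3.6}) in the corrector index, $W$ satisfies
\[
\int_{\msquare} \gd v \cdot \gd W + a \int_{\Gamma_1} v_s W_s \, ds = 0 \qquad \text{for every admissible } v.
\]

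Next I would expand
\[
\xi^T \Sigma_0 \xi = |\xi|^2 + a \int_{\Gamma_1} W_s (\phi_\xi)_s \, ds
\]
and test the displayed identity against $v = \hat{W} = W - \phi_\xi$. Using $\gd \phi_\xi \equiv \xi$ together with $\int_{\msquare} \xi \cdot \gd \hat{W} = 0$ (which follows from the $1$-periodicity of $\hat{W}$), the cross terms collapse and I arrive at the representation
\[
\xi^T \Sigma_0 \xi = \int_{\msquare} |\gd W|^2 \, d\x + a \int_{\Gamma_1} W_s^2 \, ds,
\]
which is manifestly non-negative. For strict positivity, suppose the right-hand side vanishes. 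Then $\gd W \equiv 0$ a.e.\ on $\msquare$, so $W$ is constant and hence $\hat{W} = C - \phi_\xi$ for some constant $C$; but $\hat{W}$ is $1$-periodic, while $\phi_\xi(\x) = \xi \cdot \x$ is $1$-periodic only when $\xi = \mathbf{0}$, contradicting the choice of $\xi$.

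The main subtlety is not the algebra but the admissibility of $\hat{W}$ as a test function: equation (\ref{3.6}) was established only for smooth $1$-periodic $v$, so I must invoke the density of $C_b^1(\R^2)$ in $Z_{\Gamma_1}^{1,per}(\msquare)$ — the periodic variant of Theorem (\ref{densityresult}) that is indicated immediately after (\ref{effectivecorrectorintegalequation}). Once that density is available, $\hat{W}$ is a legitimate test function and the above computation yields positive semi-definiteness, with the rigidity step upgrading this to strict positive-definiteness of $\Sigma_0$.
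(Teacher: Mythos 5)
Your proof is correct and follows essentially the same approach as the paper's: both derive the Gram-matrix representation $(\Sigma_0)_{kl} = \int_{\msquare}\gd\bar{w}_k\cdot\gd\bar{w}_l + a\int_{\Gamma_1}(\bar{w}_k)_s(\bar{w}_l)_s$ by testing (\ref{3.6}) against a corrector, and both rule out degeneracy by observing that a constant linear combination of the $\bar{w}_k$'s would force a nonzero linear function $\xi\cdot\x$ to coincide with a 1-periodic function. The only presentational difference is that you bound the quadratic form $\xi^T\Sigma_0\xi$ directly, while the paper argues via $tr(\Sigma_0)>0$ and $\det(\Sigma_0)>0$ with Cauchy--Schwarz; the key identity and the rigidity step are the same.
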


\begin{proof}
We have
\be
\begin{split}
    (\Sigma_0)_{kl}&=\delta_{kl}+a \int_{\Gamma_1} (\bar{w}_k)_s(x_l)_s ds\\
                   &=\delta_{kl}+a \int_{\Gamma_1} (\bar{w}_k)_s(\bar{w}_l)_s ds-a\int_{\Gamma_1} (\bar{w}_k)_s(\hat{w}_l)_s ds.
\end{split}\label{calculatesigma}
\ee
Replacing \(v=\hat{w}_l\) in equation (\ref{3.6}), we obtain
\[
\int_{\msquare} \gd\hat{w}_l \cdot\gd \bar{w}_k + a  \int_{\Gamma_1} (\bar{w}_k)_s(\hat{w}_l)_s =0.
\]
Inserting this into (\ref{calculatesigma}), we see
\[
\begin{split}
    (\Sigma_0)_{kl}&=\delta_{kl}-\int_{\msquare} (\bar{w}_k)_{x_l} +\int_{\msquare}\gd \bar{w}_k\cdot\gd \bar{w}_l +a \int_{\Gamma_1} (\bar{w}_k)_s(\bar{w}_l)_s\\
                   &=\int_{\msquare}\gd \bar{w}_k\cdot\gd \bar{w}_l +a \int_{\Gamma_1} (\bar{w}_k)_s(\bar{w}_l)_s.
\end{split}
\]
For \(u,v\in H_{\Gamma_1}^1(\msquare)\) (this space is quite akin to \(Z_{\Gamma_1}^{1,per}(\msquare)\), which is discussed before), we similarly define
\[
((u,v))=\int_{\msquare}\gd u\cdot\gd v +a \int_{\Gamma_1} u_s v_s,
\]
and clearly \(((\cdot,\cdot))\) defines a semi-inner-product on \(H_{\Gamma_1}^1(\msquare)\), which induces a semi-norm \(p(u)=\sqrt{((u,u))}\le \norm{u}_{H_{\Gamma_1}^1(\msquare)}\). It is clear that \(tr(\Sigma_0)>0\), and so to show that \(\Sigma_0\) is positive definite, it suffices to show that \(\det(\Sigma_0)>0\). By simple computations, we have
\[
\begin{split}
   \det(\Sigma_0)&=((\bar{w}_1,\bar{w}_1))((\bar{w}_2,\bar{w}_2))-((\bar{w}_1,\bar{w}_2))^2\\
              &\ge 0,
\end{split}
\]
with equality holds if and only if for some \(\lambda\in\R\), \(p(\bar{w}_1+\lambda\bar{w}_2)=0\), which forces \(\bar{w}_1+\lambda\bar{w}_2=\) some constant \(C\). If the equality holds, then we obtain
\[
\hat{w}_1=-\lambda\hat{w}_2 - \lambda x_2-x_1 +C.
\]
According to periodicity of \(\hat{w}_1\), we know by the above equation, \(\hat{w}_1\) should be discontinuous across \(x_1=0\), but \(\bar{\sigma}\) is always constant 1 in a neighborhood of some point on this line, and so by classical elliptic equation theories, \(\hat{w}_1\) should be continuous, which causes a contradiction.

\end{proof}

\begin{cor}
There is some \(\mu,\delta_0>0\) such that for all \(0<\delta<\delta_0\), we have
\[
\frac{1}{\mu}Id\prec \Sigma_\delta \prec\mu Id.
\]
\end{cor}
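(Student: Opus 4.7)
The plan is to get the lower bound directly from the pointwise inequality $\bar{\sigma}\ge 1$, and the upper bound from the convergence $\Sigma_\delta\rta \Sigma_0$ to the positive definite limit established in the preceding paragraphs. Neither ingredient needs a new estimate; the only mildly subtle point is upgrading the subsequence convergence of the correctors to full convergence of $\Sigma_\delta$.

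For the lower bound I would first rewrite the tensor in its symmetric form. Testing the weak corrector equation (\ref{weaksolutiontoeffectivecorrector}) with $v=w_l$ gives $\int_{\msquare}\bar{\sigma}\gd w_l\cdot(\gd w_k+\vec{e}_k)=0$, whence
\[
(\Sigma_\delta)_{kl}=\int_{\msquare}\bar{\sigma}(\gd w_k+\vec{e}_k)\cdot(\gd w_l+\vec{e}_l).
\]
For $\xi\in\R^2$ and $w_\xi^\delta=\sum_k\xi_k w_k$, this yields $\xi^T\Sigma_\delta\xi=\int_{\msquare}\bar{\sigma}|\gd w_\xi^\delta+\xi|^2\ge \int_{\msquare}|\gd w_\xi^\delta+\xi|^2$. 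Since $w_\xi^\delta$ is 1-periodic with zero mean gradient, Jensen (or Cauchy--Schwarz, using $|\msquare|=1$) gives $\int_{\msquare}|\gd w_\xi^\delta+\xi|^2\ge |\xi|^2$. Hence $\Sigma_\delta\succeq Id$ uniformly in $\delta$.

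For the upper bound, the preceding analysis shows that along every subsequence $\delta_n\rta 0$ for which $\Tilde{w}_k=w_k+x_k$ converges weakly in $H^1(\msquare)$, its limit satisfies the integral equation (\ref{effectivecorrectorintegalequation}) and hence coincides with the unique solution $\bar{w}_k=\hat{w}_k+x_k$ in $Z_{\Gamma_1}^{1,per}(\msquare)$. Because the limit is the same for every subsequence, a standard subsequence-of-subsequences argument upgrades this to full convergence $\Sigma_\delta\rta\Sigma_0$ as $\delta\rta 0$. Since $\Sigma_0$ is positive definite by the previous lemma, denoting by $\Lambda<\infty$ its largest eigenvalue, continuity of eigenvalues yields some $\delta_0>0$ such that $\Sigma_\delta\preceq(\Lambda+1)Id$ for all $0<\delta<\delta_0$.

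Choosing $\mu=\Lambda+2$ (so that both $Id\succ\frac{1}{\mu}Id$ and $(\Lambda+1)Id\prec \mu Id$) therefore yields $\frac{1}{\mu}Id\prec\Sigma_\delta\prec\mu Id$ for every $0<\delta<\delta_0$. The main obstacle is the upgrade from subsequence convergence to full convergence of $\Sigma_\delta$; as indicated, this rests entirely on the uniqueness of weak solutions in $Z_{\Gamma_1}^{1,per}(\msquare)$ already established just above, so no new PDE estimate is required.
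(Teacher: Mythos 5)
Your proof is correct. The paper gives no explicit argument for this corollary—it is presented as an immediate consequence of the convergence $\Sigma_\delta\to\Sigma_0$ established in the preceding paragraphs together with the positive definiteness lemma just above—so the implicit route the paper has in mind is: both bounds follow from the convergence of $\Sigma_\delta$ to the positive definite, finite tensor $\Sigma_0$. Your upper bound matches this, and you correctly fill in the subsequence-of-subsequences upgrade from ``along a subsequence'' to full convergence, which rests on the uniqueness of $\hat{w}_k$ in $Z_{\Gamma_1}^{1,per}(\msquare)$ noted in the text. Your lower bound, however, goes a more elementary route than the paper needs: by symmetrizing $\Sigma_\delta$ via the corrector equation and using only $\bar{\sigma}\ge 1$ together with Jensen and periodicity of $w_\xi^\delta$, you get $\Sigma_\delta\succeq Id$ uniformly for \emph{all} $\delta>0$, not merely for $\delta$ small. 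This is essentially the Reuss-type bound $\Sigma_\delta\succeq\big(\int_\msquare\bar\sigma^{-1}\big)^{-1}Id$ that the paper itself invokes later in Section 4 for a different purpose; using it here is cleaner and decouples the lower bound from the delicate convergence analysis. Both approaches are valid; yours has the advantage of producing a uniform lower bound with no hypothesis on $\delta$.
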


With the tensor family \(\{\Sigma_\delta\}\) uniformly strictly elliptic, we clearly obtain energy estimate
\be
\int_{\co} |\gd u^{\delta,0}|^2\lesssim_{\mu,\co} \int_{\co} f^2. 
\ee
Therefore, we see after passage to a subsequence of \(\delta\rta0\), \(u^{\delta,0}\) converges weakly to some \(u^{0,0}\) in \(H^1(\co)\), and because \(\Sigma_\delta\rta\Sigma_0\), as \(\delta\rta0\), we see \(u^{0,0}\) should satisfy
\be
\bca
-\gd\cdot(\Sigma_0\gd u^{0,0})(\x)=f(\x), &\x\in\co,\\
u^{0,0}(\x)=0,&\x\in\po.
\eca
\ee
This then gives the effective model obtained through method I.

\subsection{Method II: homogenization of EBCs}

 The proof is inspired by Tartar's Energy Method \citep{energtartar}. We consider \(\Gamma_1\) as a pattern on a period as we mentioned in the introduction, and at scale \(\epsilon>0\) and in the \((i,j)^T\)-th cell we denote by \(\Gamma_\epsilon^{i,j}\) the copy of \(\epsilon\Gamma_1\) in it. It was shown in the Preliminaries that for each \(1/\epsilon=N\in\mathbb{N}_+\), after sending \(\delta\rta0\), \(u=(u^{0,\epsilon})\) satisfies (even if there are interior boundaries intersecting the outer boundary)
\be
\int_\co \gd u\cdot\gd\psi d\x+\frac{a}{N}\sum_{i,j=-LN}^{LN-1}\int_{\Gamma_\epsilon^{i,j}} u_s\psi_s ds=\int_\co f\psi d\x,\label{uepsilon}
\ee
for every \(\psi\in C_0^\infty(\co)\). In fact, if the pattern is regular, by the density result, \(\psi\) can be chosen to be \(H_{\cup_{i,j}\Gamma_\epsilon^{i,j}}^{1,0}(\co)\) functions with compact support in \(\co\). Moreover, regardless of the regularity of the pattern, we have an energy estimate
\be
\int_{\co} |\gd u|^2d\x +\frac{a}{N}\sum_{i,j=-LN}^{LN-1}\int_{\Gamma_\epsilon^{i,j}} u_s^2 ds \lesssim_{\co} \int_\co f^2 d\x.
\ee

Let \(\mathbf{H}^1(\co)\coloneqq (H^1(\co))^2\) with ``\((\cdot)^2\)" understood as Banach space product. For \(\vec{g}=(g_1,g_2)\in \mathbf{H}^1(\co)\), we have by trace theorem, on each \(\Gamma_\epsilon^{i,j}\), \(\vec{g}\) has a unique restriction to \( L^2(\Gamma_\epsilon^{i,j})\), and
\[
\int_{\Gamma_\epsilon^{i,j}} |\vec{g}|^2 ds \le C \lb\epsilon\int_{R_{\delta}^{\epsilon,i,j}}|D \vec{g}|^2 d\x +\frac{1}{\epsilon}\int_{R_{\delta}^{\epsilon,i,j}} |\vec{g}|^2 \rb,
\]
where \(C>0\) might be dependent of \(\delta>0\) but is independent of \(\epsilon>0\), and \({R_{\delta}^{\epsilon,i,j}}\) is a copy of \(\epsilon R_{\delta}\) accessory to \(\Gamma_\epsilon^{i,j}\). Summing up the above inequality over all index \((i,j)\) of cells, we obtain
\[
\sum_{i,j=-LN}^{LN-1} \int_{\Gamma_\epsilon^{i,j}} |\vec{g}|^2 ds \lesssim \epsilon \int_{\co} |D \vec{g}|^2d\x +\frac{1}{\epsilon} \int_{\co} |\vec{g}|^2d\x,
\]
and by dividing both sides by \(N\), we see
\be
\frac{1}{N}\sum_{i,j=-LN}^{LN-1} \int_{\Gamma_\epsilon^{i,j}} |\vec{g}|^2 ds \le C \norm{\vec{g}}_{\mathbf{H}^1(\co)}^2,\label{L2boundaryestimate}
\ee
with \(C>0\) independent of \(\epsilon>0\). The above discussions lead to the following consequence.
\begin{lem}
For each \(\vec{g}\in \mathbf{H}^1(\co)\), the following family of linear functionals
\[
U_{N}(\vec{g})\coloneqq \int_\co \gd u\cdot\vec{g} dxdy+\frac{a}{N}\sum_{i,j=-LN}^{LN-1}\int_{\Gamma_\epsilon^{i,j}} u_s\lb\vec{g}\cdot\vec{T}\rb ds
\]
is uniformly bounded in \(\lb\mathbf{H}^1(\co)\rb^\ast\). 
\end{lem}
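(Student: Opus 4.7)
The plan is to decompose $U_N(\vec g)$ into its volume and interface pieces and bound each by $\norm{\vec g}_{\mathbf{H}^1(\co)}$ with a constant independent of $N$. For the volume term $\int_\co \gd u\cdot \vec g\,d\x$, Cauchy--Schwarz together with the energy estimate for $u=u^{0,\epsilon}$ recorded just before the lemma (from which $\norm{\gd u}_{L^2(\co)}\lesssim_\co \norm{f}_{L^2(\co)}$ uniformly in $\epsilon$) gives
\[
\Bigl|\int_\co \gd u\cdot \vec g\,d\x\Bigr|\le \norm{\gd u}_{L^2(\co)}\norm{\vec g}_{L^2(\co)}\lesssim_\co \norm{f}_{L^2(\co)}\,\norm{\vec g}_{\mathbf{H}^1(\co)}.
\]

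For the interface term, I would apply Cauchy--Schwarz arc-by-arc and then a discrete Cauchy--Schwarz over the cell indices $(i,j)$. Using $|\vec g\cdot \vec T|\le |\vec g|$, this yields
\[
\Bigl|\tfrac{a}{N}\sum_{i,j=-LN}^{LN-1}\int_{\Gamma_\epsilon^{i,j}} u_s\,(\vec g\cdot \vec T)\,ds\Bigr|\le a\Bigl(\tfrac{1}{N}\sum_{i,j}\int_{\Gamma_\epsilon^{i,j}} u_s^2\,ds\Bigr)^{1/2}\Bigl(\tfrac{1}{N}\sum_{i,j}\int_{\Gamma_\epsilon^{i,j}} |\vec g|^2\,ds\Bigr)^{1/2}.
\]
The first factor is controlled uniformly in $\epsilon$ by the EBC energy estimate $\tfrac{a}{N}\sum_{i,j}\int_{\Gamma_\epsilon^{i,j}} u_s^2\,ds\lesssim_\co \norm{f}_{L^2(\co)}^2$ stated right before the lemma. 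The second factor is precisely what inequality (\ref{L2boundaryestimate}) bounds by a constant times $\norm{\vec g}_{\mathbf{H}^1(\co)}^2$, with the constant explicitly independent of $\epsilon$.

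Combining the two estimates gives $|U_N(\vec g)|\le C(f)\norm{\vec g}_{\mathbf{H}^1(\co)}$ with $C(f)$ independent of $N=1/\epsilon$, which is the desired uniform bound in $(\mathbf{H}^1(\co))^\ast$; linearity of $U_N$ in $\vec g$ is immediate from the definition. I do not expect a genuine obstacle here, as all three ingredients (the Cauchy--Schwarz chain, the EBC energy estimate for $u_s$ on $\cup_{i,j}\Gamma_\epsilon^{i,j}$, and the trace-type bound (\ref{L2boundaryestimate})) are already in place. The only point deserving care is that the constant in (\ref{L2boundaryestimate}) must remain $\epsilon$-independent once summed over all $O(N^2)$ cells; this is exactly how the normalization by $1/N$ on the left-hand side was designed, so no further geometric work is required.
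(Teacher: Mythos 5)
Your proof is correct and follows precisely the route the paper intends: the paper gives no separate argument but states the lemma as a consequence of the immediately preceding energy estimate and inequality (\ref{L2boundaryestimate}), which is exactly the Cauchy--Schwarz chain you spell out. Nothing is missing.
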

Therefore, by Banach-Alaoglu Theorem, after passage to a subsequence of \(N\rta\infty\), we know that \(U_N\) \(\ast\)-weakly converges to some \(U_\infty\) in \(\lb\mathbf{H}^1(\co)\rb^\ast\). On the other hand, by energy estimate, we know that \(u^{0,\epsilon}\) is uniformly bounded in \(H_0^1(\co)\), and thus after passage to a subsequence of \(\epsilon=1/N\rta0\), \(u^{0,\epsilon}\) will weakly converge to some \(u^\ast\) in \(H_0^1(\co)\). It is natural to ask whether \(u^\ast\) and \(u^{0,0}\) are the same. To this end, one needs to find out the integral representation of \(U_\infty\) in terms of \(u^\ast\).

To derive the true effective model, one needs to be aware that, although the road width \(\delta>0\) already sent to 0 and heterogeneity replaced by boundary conditions, the process of homogenization continues to be the same. Looking back to Method I, we find that the effective corrector \(\hat{w}_k\) should be useful. We then follow the way of classical homogenization, and define an auxiliary function
\[
w_k^\epsilon(\x)\coloneqq x_k + \epsilon \hat{w}_k(\epsilon^{-1}\x).
\]
By the equation (\ref{effectivecorrectorintegalequation}) that \(\hat{w}_k\) satisfies, we have that for all \(\psi\in C_0^\infty(\co)\),
\be
\int_{\co} \gd w_k^\epsilon\cdot \gd \psi d\x + \frac{a}{N}\sum_{i,j=-LN}^{LN-1}\int_{\Gamma_\epsilon^{i,j}} (w_k^\epsilon)_s\psi_s ds=0.\label{wepsilon}
\ee
Replacing \(\psi\) by \(\psi w_k^\epsilon\) in (\ref{uepsilon}) and by \(\psi u^{0,\epsilon}\) (\(u=u^{0,\epsilon}\) if the symbols are clearly understood) in (\ref{wepsilon}), and combining the two equations, we obtain
\be
\int_{\co} \gd u\cdot\gd\psi w_k^\epsilon d\x -\int_{\co} \gd w_k^\epsilon\cdot\gd\psi u d\x + \frac{a}{N}\sum_{i,j=-LN}^{LN-1}\lmb\int_{\Gamma_\epsilon^{i,j}} u_s\psi_s w_k^\epsilon ds -\int_{\Gamma_\epsilon^{i,j}} \lb w_k^\epsilon\rb_s\psi_s u ds\rmb=\int_\co f w_k^\epsilon \psi d\x.
\ee
With the notations defined before, we have
\[
\begin{split}
    LHS&= U_N(\gd\psi x_k)+ \epsilon U_N(\gd\psi \hat{w}_k(\epsilon^{-1}\cdot))-\int_{\co} \gd w_k^\epsilon\cdot\gd\psi u d\x-\frac{a}{N}\sum_{i,j=-LN}^{LN-1} \int_{\Gamma_\epsilon^{i,j}} \lb w_k^\epsilon\rb_s\psi_s u ds\\
      &\eqqcolon U_N(\gd\psi x_k)+I-II-III.
\end{split}
\]
Recalling that \(U_N\overset{\ast}{\weakcv}U_\infty\) in \((\mathbf{H}^1(\co))^\ast\), \(u\) converges strongly to \(u^\ast\) in \(L^2(\co)\), and \(\gd w_k^\epsilon\) converges weakly to \(\vec{e}_k\) in \(L^2(\co)\) as \(N\rta\infty\), we immediately obtain that as \(\epsilon\rta0\),
\begin{itemize}
    \item \(U_N(\gd\psi x_k) \longrightarrow U_\infty(\gd \psi x_k)\);
    \item \(II \longrightarrow \int_{\co} u^\ast\psi_{x_k}\).
\end{itemize}
Moreover, by uniform boundedness of \(U_N\), we have
\[
\begin{split}
   I&\le \epsilon \norm{U_N}_{(\mathbf{H}^1(\co))^\ast}\norm{ \gd\psi}_{L^\infty(\co)} \norm{\hat{w}_k}_{Z_{\Gamma_1}^{1,per}(\msquare)}\\
 &\lesssim \epsilon  \norm{ \psi}_{C^1(\co)} \norm{\hat{w}_k}_{Z_{\Gamma_1}^{1,per}(\msquare)}.
\end{split}
\]
It then suffices to consider \(III\), but we have
\be
\begin{split}
   III&=\frac{a}{N}\sum_{i,j=-LN}^{LN-1} \int_{\Gamma_\epsilon^{i,j}} \lb w_k^\epsilon\rb_s\psi_s u ds\\
      &=\frac{a}{N}\sum_{i,j=-LN}^{LN-1} \int_{\Gamma_\epsilon^{i,j}} \lb  w_k^\epsilon\rb_s \vec{T}\cdot \gd\psi u ds.
\end{split}\label{3sum}
\ee
To find the limit of the above quantity, let us start with simple patterns. Suppose that a 1-periodic pattern \(\mathcal{G}\) is composed of only one arc \(\gamma\) of length \(l>0\) in each cell. To specify the scaling relationship, we assume that the arc in \([0,1]^2\) is reparametrized by \(\gamma_1^{0,0}:[0,l]\rta[0,1]^2\) by arc-length. If a copy is in \([i,i+1]\times[j,j+1]\), then it can also be reparametrized by arc-length by \(\gamma_1^{0,0}+(i,j)^T\). At scale \(\epsilon>0\), the copy \(\epsilon\gamma\) in the \((i,j)^T\)-th cell can still be reparametrized by \(\gamma_{\epsilon}^{i,j}(\cdot)=\epsilon \gamma_{1}^{i,j}(\epsilon^{-1}\cdot)\) by arc-length. Notice that
\[
w_k^\epsilon(\gamma_{\epsilon}^{i,j}(s))=\epsilon\bar{w}_k(\epsilon^{-1}\gamma_{\epsilon}^{i,j}(s))=\epsilon \bar{w}_k(\gamma_1^{0,0}(\epsilon^{-1}s)),\,s\in[0,\epsilon l],
\]
and thus we have
\[
\lb w_k^\epsilon(\gamma_{\epsilon}^{i,j}(\cdot)) \rb_s(s) = \lb\bar{w}_k(\gamma_1^{0,0}(\cdot))\rb_s(\epsilon^{-1}s).
\]
Moreover, on \(\Gamma_\epsilon^{i,j}\), we have the unit tangent field \(\vec{T}_{(\epsilon,i,j)}(s)=\vec{T}_{(1,0,0)}(\epsilon^{-1}s),\,s\in[0,\epsilon l]\). Therefore, by the above discussions, the sum (\ref{3sum}) can be formally understood as
\[
\frac{1}{N} \sum_{i,j=-LN}^{LN-1}\int_{\Gamma_{\epsilon}^{i,j}} \vec{F}(\epsilon^{-1}s)\cdot \vec{G}_\epsilon(\gamma_\epsilon^{i,j}(s)) ds,
\]
where \(\vec{F}\) is \((\bar{w}_k)_s \vec{T}\), and \(\vec{G}_\epsilon\) is a weakly convergent sequence in \(\mathbf{H}^1(\co)\) with support of its elements contained in a compact subset of \(\co\). The case for a general pattern is simply a sum like this over all arcs. The following lemmas focus on the simple case that \(\mathcal{G}\) is composed of only one arc, and vector-valued functions replaced by scalar-valued ones.

\begin{lem}
Let \(G_\epsilon\in H_0^1(\co)\) be weakly convergent to \(G_0\) in \(H_0^1(\co)\). Then, we have
\[
\frac{1}{N} \sum_{i,j=-LN}^{LN-1}\int_{\Gamma_{\epsilon}^{i,j}} {G}_\epsilon(\gamma_\epsilon^{i,j}(s)) d s\overset{N\rta\infty}{\longrightarrow} l \int_{\co} G_0(\x) d\x.
\]
\end{lem}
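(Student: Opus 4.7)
The plan is to split $G_\epsilon = G_0 + (G_\epsilon - G_0)$ and treat each piece separately: the fixed part reduces to a periodic Riemann sum, while the remainder is controlled by a rescaled trace estimate combined with Rellich compactness.

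For the remainder, the bound (\ref{L2boundaryestimate}) as stated only gives uniform boundedness, so I would instead return to the unsummed version
\[
\int_{\Gamma_\epsilon^{i,j}} |g|^2 ds \le C\lb \epsilon \int_{R_{\delta}^{\epsilon,i,j}} |D g|^2 d\x + \frac{1}{\epsilon} \int_{R_{\delta}^{\epsilon,i,j}} |g|^2 d\x \rb,
\]
sum over the $O(\epsilon^{-2})$ cells, and then divide by $N = 1/\epsilon$ to obtain
\[
\frac{1}{N}\sum_{i,j} \int_{\Gamma_\epsilon^{i,j}} |g|^2 ds \le C\lb \epsilon^2 \norm{D g}_{L^2(\co)}^2 + \norm{g}_{L^2(\co)}^2 \rb.
\]
Applied with $g = G_\epsilon - G_0$, the first summand vanishes since $\{G_\epsilon\}$ is bounded in $H_0^1(\co)$, and the second vanishes by Rellich--Kondrachov ($G_\epsilon \rta G_0$ strongly in $L^2(\co)$). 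Combined with Cauchy--Schwarz and the trivial bound $\frac{1}{N}\sum_{i,j} |\Gamma_\epsilon^{i,j}| = O(1)$ (total arc length is $O(N)$ and we divide by $N$), this shows the remainder contribution to the sum tends to zero.

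For the main term, first prove the convergence for $\varphi \in C_c^\infty(\co)$: the change of variables $s = \epsilon t$ yields
\[
\frac{1}{N}\sum_{i,j} \int_{\Gamma_\epsilon^{i,j}} \varphi(\gamma_\epsilon^{i,j}(s)) ds = \epsilon^2 \sum_{i,j} \int_0^l \varphi\lb \epsilon \gamma_1^{0,0}(t) + (i\epsilon, j\epsilon) \rb dt,
\]
and the Lipschitz estimate $|\varphi(\epsilon \gamma_1^{0,0}(t) + (i\epsilon,j\epsilon)) - \varphi(i\epsilon, j\epsilon)| \le C\epsilon$ reduces this to $l \cdot \epsilon^2 \sum_{i,j} \varphi(i\epsilon, j\epsilon)$ up to an $O(\epsilon)$ error, which is a standard Riemann sum converging to $l \int_\co \varphi(\x) d\x$. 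To upgrade from $C_c^\infty(\co)$ to $H_0^1(\co)$, fix $\eta > 0$ and pick $\varphi$ with $\norm{G_0 - \varphi}_{H^1(\co)} < \eta$; the rescaled trace estimate derived above applied to $G_0 - \varphi$ together with Cauchy--Schwarz gives $\frac{1}{N}\sum_{i,j}\int_{\Gamma_\epsilon^{i,j}}|G_0 - \varphi| ds \le C\eta$ uniformly in $\epsilon$, matching $l\int_\co |G_0 - \varphi| d\x \le Cl\eta$, so letting $\eta \rta 0$ closes the argument.

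The main obstacle is the remainder estimate: a direct appeal to (\ref{L2boundaryestimate}) applied to $G_\epsilon - G_0$ yields only boundedness, not decay. The key is to exploit the specific scaling $\epsilon N = 1$ so that the averaged boundary $L^2$ norm over all cells morally matches $\norm{\cdot}_{L^2(\co)}^2$ plus a vanishing gradient correction, at which point strong $L^2$-convergence from Rellich is enough to kill the error.
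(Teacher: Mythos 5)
Your proof is correct, and it follows a genuinely different route from the paper's. The paper works directly with $G_\epsilon$ and reduces to the case of a straight horizontal segment via polygonal approximation: it partitions the arc $\gamma$ into $K$ pieces, writes each small piece as a function graph over its chord, and invokes the geometric Lemmas \ref{translationerror}, \ref{rotationfact}, \ref{functiontranslate} and \ref{rescalefact} to replace the integrals over curved pieces by integrals over horizontal lines, after which a one-dimensional trace-continuity argument in the normal direction turns the cell sum into $\sum_k|q_k-q_{k+1}|\int_\co G_\epsilon$; finally it uses strong $L^2$ convergence of $G_\epsilon$ and sends $K\rta\infty$ to recover the arc length. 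You instead split $G_\epsilon = G_0 + (G_\epsilon - G_0)$, kill the remainder with the rescaled per-cell trace inequality (crucially noting that dividing by $N=1/\epsilon$ promotes the gradient prefactor to $\epsilon^2$, so that boundedness in $H^1$ plus Rellich suffices), and handle the fixed function $G_0$ by density of $C_c^\infty(\co)$ together with a classical Riemann-sum computation, with the same rescaled trace inequality controlling the density error. Your approach is more modular and softer: it completely bypasses the geometric lemmas of Section 2.1 and reuses the trace estimate already derived for \eqref{L2boundaryestimate}, at the cost of a two-step density reduction; the paper's proof is more self-contained in that it leverages exactly the machinery it built earlier for this purpose and handles $G_\epsilon$ in one pass. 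Both are valid, and your decomposition arguably makes the role of the scaling $\epsilon N = 1$ and of Rellich compactness more transparent.
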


\begin{proof}
Let \(K\in\mathbb{N}_+\) large, and we consider the uniform partition of \([0,l]\) by mesh \(l/K\). Let \(q_k=\gamma_1^{0,0}(l k/K)\) for \(k=0,\cdots,K\). When \(K\) is large, then because \(\gamma\) is \(C^2\), each arc \(\bigfrown{q_kq_{k+1}}\) can be written as the graph of function \(g_k(\bar{s})\) on the linear segment \(\overline{q_kq_{k+1}}\), where \(\bar{s}\in\overline{q_kq_{k+1}}\). 

According to Lemma (\ref{functiontranslate}), we know that enduring a small error \(o_K(1)\), the integral of \(G_\epsilon\) on the arc \(\bigfrown{q_kq_{k+1}}\) can be replaced by that on the linear segment \(\overline{q_kq_{k+1}}\). After proper translation (Lemma (\ref{translationerror})), rotation (Lemma (\ref{rotationfact})) and rescaling (Lemma (\ref{rescalefact})), we only have to consider 
\[
\frac{\sum_{k=0}^{K-1}|q_k-q_{k+1}|}{N}\sum_{j=-LN}^{LN-1} \int_{\R} G_\epsilon(x, j/N) dx.
\]
But \(\int_{\R} G_\epsilon(x, y) dx\)'s are uniformly \(C_0^{1/2}(\R)\) functions in \(y\), and so we arrive at
\[
\sum_{k=0}^{K-1}|q_k-q_{k+1}| \int_{\co} G_\epsilon(\x) d\x,
\]
which converges to 
\[
\sum_{k=0}^{K-1}|q_k-q_{k+1}| \int_{\co} G_0(\x) d\x,
\]
by strong convergence of \(G_\epsilon\) in \(L^2(\co)\) as \(\epsilon\rta0\). Because \(\gamma\) is rectifiable, we know that as \(K\rta\infty\), 
\[
\sum_{k=0}^{K-1}|q_k-q_{k+1}| \longrightarrow l.
\]

\end{proof}

\begin{cor}
Let \(\phi\) be a step function on \(\gamma\), then we have
\[
\frac{1}{N} \sum_{i,j=-LN}^{LN-1}\int_{\Gamma_{\epsilon}^{i,j}} \phi(\epsilon^{-1}s){G}_\epsilon(\gamma_\epsilon^{i,j}(s)) ds\overset{N\rta\infty}{\longrightarrow} \int_{\gamma} \phi(s) ds\cdot \int_\co G_0(\x) d\x.
\]\label{simplefunctioncase}
\end{cor}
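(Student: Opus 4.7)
The plan is to reduce the statement to the preceding lemma by exploiting linearity. Since $\phi$ is a step function on $\gamma$, write $\phi = \sum_{m=1}^M c_m \mathbf{1}_{I_m}$ for a finite disjoint collection of sub-intervals $I_m = (a_m,b_m) \subset [0,l]$ in the arc-length parametrization. Both sides of the claimed convergence depend linearly on $\phi$, so it suffices to treat a single indicator $\phi = \mathbf{1}_I$ with $I = (a,b) \subset [0,l]$. For such $\phi$ a direct change of variable gives
\[
\int_{\Gamma_\epsilon^{i,j}} \mathbf{1}_I(\epsilon^{-1} s)\, G_\epsilon(\gamma_\epsilon^{i,j}(s))\, ds \;=\; \int_{\epsilon a}^{\epsilon b} G_\epsilon(\gamma_\epsilon^{i,j}(s))\, ds,
\]
which is nothing but the integral of $G_\epsilon$ along the sub-arc $\gamma_\epsilon^{i,j}([\epsilon a,\epsilon b])$.

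This sub-arc is a rescaled periodic copy of the $C^2$ arc $\gamma|_{[a,b]}$ of length $b-a$, arranged across cells in exactly the same manner as the full $\Gamma_\epsilon^{i,j}$. Applying the preceding lemma verbatim, with $\gamma|_{[a,b]}$ playing the role of $\gamma$ and with $b-a$ playing the role of $l$, then produces
\[
\frac{1}{N}\sum_{i,j=-LN}^{LN-1}\int_{\Gamma_\epsilon^{i,j}} \mathbf{1}_I(\epsilon^{-1}s)\, G_\epsilon(\gamma_\epsilon^{i,j}(s))\, ds \;\overset{N\rta\infty}{\longrightarrow}\; (b-a)\int_{\co} G_0(\x)\, d\x \;=\; \int_\gamma \mathbf{1}_I(s)\, ds \cdot \int_{\co} G_0(\x)\, d\x.
\]
Summing these limits against the coefficients $c_m$ recovers the corollary.

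The only point that requires inspection is whether the polygonal-approximation argument of the preceding lemma (choice of partition points $q_k$ together with the translation, rotation, and rescaling estimates of Lemmas \ref{translationerror}, \ref{rotationfact} and \ref{rescalefact}) continues to apply near the new endpoints $\gamma(a)$ and $\gamma(b)$. This is immediate: the $C^2$ regularity of $\gamma|_{[a,b]}$ is inherited from that of $\gamma$, and all geometric estimates used in the preceding lemma are purely local in nature, depending only on a $C^2$ bound for the arc in a neighborhood of each partition point. I therefore anticipate no substantial new obstacle beyond what the preceding lemma already treated; the proof is essentially a bookkeeping exercise in linearity plus restriction of the parameter interval.
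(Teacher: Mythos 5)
Your proof is correct and follows the natural route the paper leaves implicit (the corollary is stated without proof): linearity in $\phi$ reduces to an indicator $\mathbf{1}_{(a,b)}$, which turns the weighted integral into an integral over the sub-arc $\gamma_\epsilon^{i,j}([\epsilon a,\epsilon b])$, and then the preceding lemma applies verbatim with $\gamma|_{[a,b]}$ (still $C^2$) in place of $\gamma$ and $b-a$ in place of $l$. Your remark on re-checking the polygonal approximation near the new endpoints is the right thing to verify, and your dismissal of it is sound since all the estimates in Lemmas \ref{translationerror}, \ref{rotationfact}, \ref{functiontranslate}, and \ref{rescalefact} are local.
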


\begin{lem}
We have the following estimate
\[
\lw\frac{1}{N} \sum_{i,j=-LN}^{LN-1}\int_{\Gamma_{\epsilon}^{i,j}} \phi(\epsilon^{-1}s){G}_\epsilon(\gamma_\epsilon^{i,j}(s)) ds\rw \le CL \norm{G_\epsilon}_{H_0^1(\co)} \norm{\phi}_{L^2(\gamma)}.
\]
\end{lem}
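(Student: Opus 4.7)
The plan is to reduce the bound to two routine Cauchy--Schwarz applications plus the trace estimate (\ref{L2boundaryestimate}) that the paper has already established. The sum has the natural shape \(\frac{1}{N}\sum_{i,j} a_{i,j} b_{i,j}\) with \(a_{i,j}\) an \(L^2\)-type norm of \(\phi(\epsilon^{-1}\cdot)\) on \(\Gamma_\epsilon^{i,j}\) and \(b_{i,j}\) an \(L^2\)-type norm of \(G_\epsilon\) on \(\Gamma_\epsilon^{i,j}\); the content is to track \(\epsilon\)-factors so that the final answer is independent of \(N\).

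First, I would apply Cauchy--Schwarz on each arc \(\Gamma_\epsilon^{i,j}\) to get
\[
\lw \int_{\Gamma_\epsilon^{i,j}} \phi(\epsilon^{-1}s)\,G_\epsilon(\gamma_\epsilon^{i,j}(s))\,ds \rw \le \norm{\phi(\epsilon^{-1}\cdot)}_{L^2(\Gamma_\epsilon^{i,j})}\,\norm{G_\epsilon}_{L^2(\Gamma_\epsilon^{i,j})}.
\]
A change of variables \(t=\epsilon^{-1}s\) on \(\Gamma_\epsilon^{i,j}\), which has length \(\epsilon l\), yields \(\norm{\phi(\epsilon^{-1}\cdot)}_{L^2(\Gamma_\epsilon^{i,j})}=\sqrt{\epsilon}\,\norm{\phi}_{L^2(\gamma)}\), independent of \((i,j)\).

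Next I would apply Cauchy--Schwarz over the \((2LN)^2\) cell indices:
\[
\frac{1}{N}\sum_{i,j=-LN}^{LN-1} \sqrt{\epsilon}\,\norm{\phi}_{L^2(\gamma)}\,\norm{G_\epsilon}_{L^2(\Gamma_\epsilon^{i,j})}
\le \frac{\sqrt{\epsilon}\,\norm{\phi}_{L^2(\gamma)}}{N}\,(2LN)\,\lb\sum_{i,j} \norm{G_\epsilon}_{L^2(\Gamma_\epsilon^{i,j})}^2\rb^{1/2}.
\]
The prefactor is exactly \(2L\,\sqrt{\epsilon}\), and the remaining square-root can be rewritten as \(\bigl(\tfrac{1}{N}\sum_{i,j}\norm{G_\epsilon}_{L^2(\Gamma_\epsilon^{i,j})}^2\bigr)^{1/2}\) using \(\epsilon=1/N\).

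Finally I invoke the summed trace inequality (\ref{L2boundaryestimate}) (which is valid with the same constant when the argument is scalar rather than vector-valued, applied component-wise) to obtain
\[
\frac{1}{N}\sum_{i,j=-LN}^{LN-1}\norm{G_\epsilon}_{L^2(\Gamma_\epsilon^{i,j})}^2 \le C\,\norm{G_\epsilon}_{H^1(\co)}^2.
\]
Combining gives \(|\cdots|\le 2L\sqrt{C}\,\norm{\phi}_{L^2(\gamma)}\,\norm{G_\epsilon}_{H_0^1(\co)}\), which is the claimed estimate after absorbing constants. No step looks delicate; the only subtlety is the bookkeeping of powers of \(\epsilon=1/N\), and the observation that the seemingly large number \((2LN)^2\) of cells, divided by \(N\) and further balanced by the \(\sqrt\epsilon\) from the \(\phi\)-change of variables and the \(1/\epsilon\) appearing in (\ref{L2boundaryestimate}), yields a clean factor of \(L\) with no residual \(N\) dependence.
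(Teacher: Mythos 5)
Your proof is correct and uses the same ingredients as the paper's: Cauchy--Schwarz to decouple \(\phi\) from \(G_\epsilon\), the change of variables \(s\mapsto\epsilon^{-1}s\) giving the \(\sqrt{\epsilon}\,\norm{\phi}_{L^2(\gamma)}\) factor, and the trace estimate (\ref{L2boundaryestimate}). The only cosmetic difference is that you apply Cauchy--Schwarz in two stages (per arc, then over cell indices) while the paper applies it once to the combined measure \(\frac{1}{N}\sum_{i,j}\int_{\Gamma_\epsilon^{i,j}}\,ds\); since the per-arc \(\phi\)-norm is the same constant for every \((i,j)\), the two computations are arithmetically identical.
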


\begin{proof}
By Cauchy-Schwartz, we have
\[
\begin{split}
    \lw\frac{1}{N} \sum_{i,j=-LN}^{LN-1}\int_{\Gamma_{\epsilon}^{i,j}} \phi(\epsilon^{-1}s){G}_\epsilon ds\rw&\le \lb\frac{1}{N} \sum_{i,j=-LN}^{LN-1}\int_{\Gamma_{\epsilon}^{i,j}} {G}_\epsilon^2 ds\rb^{1/2}\lb\frac{1}{N} \sum_{i,j=-LN}^{LN-1}\int_{\Gamma_{\epsilon}^{i,j}} \phi^2(\epsilon^{-1}s) ds\rb^{1/2}\\
    &\overset{(\ref{L2boundaryestimate})}{\le} C\norm{G_\epsilon}_{H_0^1(\co)} \lb\frac{\epsilon}{N} \times (LN)^2 \int_\gamma \phi^2(s) ds \rb^{1/2}\\
    &\le C L \norm{G_\epsilon}_{H_0^1(\co)} \cdot \lb\int_{\gamma} \phi^2(s)ds\rb^{1/2}.
\end{split}
\]
\end{proof}

\begin{cor}
Corollary (\ref{simplefunctioncase}) can be extended to the case that \(\phi\in L^2(\gamma)\). Moreover, because \((\bar{w}_k)_s\vec{T}\) is clearly \(L^2(\gamma)\), the convergence of \(III\) is assured.
\end{cor}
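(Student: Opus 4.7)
The plan is to combine density of step functions in $L^2(\gamma)$ with the uniform bound from the preceding lemma in a standard $3\eta$-argument. Given $\phi\in L^2(\gamma)$ and $\eta>0$, I would first pick a step function $\phi_\eta$ on $\gamma$ with $\|\phi-\phi_\eta\|_{L^2(\gamma)}<\eta$ and split
\[
\frac{1}{N}\sum_{i,j=-LN}^{LN-1}\int_{\Gamma_\epsilon^{i,j}} \phi(\epsilon^{-1}s) G_\epsilon(\gamma_\epsilon^{i,j}(s))\,ds \;=\; (A_{N,\eta})+(B_{N,\eta}),
\]
where $(A_{N,\eta})$ uses $\phi_\eta$ in place of $\phi$ and $(B_{N,\eta})$ uses $\phi-\phi_\eta$. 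By Corollary (\ref{simplefunctioncase}), $(A_{N,\eta})$ converges as $N\to\infty$ to $\int_\gamma \phi_\eta\,ds\cdot\int_\co G_0\,d\x$; by the preceding lemma, $|(B_{N,\eta})|$ is bounded by $CL\,\sup_{\epsilon>0}\|G_\epsilon\|_{H_0^1(\co)}\,\eta$ uniformly in $N$; and Cauchy--Schwarz gives $|\int_\gamma(\phi-\phi_\eta)\,ds|\le\sqrt{l}\,\eta$. Sending $N\to\infty$ first and then $\eta\to 0$ yields
\[
\frac{1}{N}\sum_{i,j=-LN}^{LN-1}\int_{\Gamma_\epsilon^{i,j}} \phi(\epsilon^{-1}s) G_\epsilon(\gamma_\epsilon^{i,j}(s))\,ds \;\longrightarrow\; \int_\gamma \phi\,ds\cdot\int_\co G_0\,d\x.
\]

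For the convergence of $III$ in (\ref{3sum}), I would decompose the dot product $\vec T\cdot\gd\psi$ componentwise,
\[
III \;=\; \frac{a}{N}\sum_{i,j=-LN}^{LN-1}\sum_{m=1}^{2} \int_{\Gamma_\epsilon^{i,j}} \bigl[(\bar{w}_k)_s T^m\bigr](\epsilon^{-1}s)\,\bigl[\psi_{x_m}u^{0,\epsilon}\bigr](\gamma_\epsilon^{i,j}(s))\,ds,
\]
and apply the extension above with the scalar weights $\phi_m(s)=(\bar{w}_k)_s(s)\,T^m(s)\in L^2(\gamma)$ (which holds since $(\bar{w}_k)_s\in L^2(\gamma)$ by the effective corrector integral equation (\ref{effectivecorrectorintegalequation}) and $\vec T$ is smooth and bounded on the $C^2$ arc $\gamma$), and with $G_\epsilon^m=\psi_{x_m}u^{0,\epsilon}\in H_0^1(\co)$. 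This $G_\epsilon^m$ has uniformly bounded $H_0^1$-norm (since $u^{0,\epsilon}$ does) and converges weakly to $\psi_{x_m}u^{\ast}$, because multiplication by the $C_0^\infty(\co)$ factor $\psi_{x_m}$ preserves weak convergence in $H_0^1(\co)$.

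The main obstacle is largely bookkeeping: the $\phi$-argument is evaluated at $\epsilon^{-1}s$ while $G_\epsilon$ is evaluated at $\gamma_\epsilon^{i,j}(s)$, and one must track these rescalings carefully when approximating by step functions. Since the constant in the trace estimate (\ref{L2boundaryestimate}) and hence in the preceding lemma is independent of $\epsilon$, the uniform remainder bound required by the $3\eta$-argument follows without further work, and the vector-valued setting reduces to a finite sum over the scalar components $m=1,2$, completing the identification of the limit of $III$ and therefore the derivation of the integral representation of $U_\infty$ in terms of $u^{\ast}$.
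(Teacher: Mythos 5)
Your proof is correct and is exactly the argument the paper's structure points to: the preceding uniform estimate (Lemma 3.5) is stated precisely so that a $3\eta$-argument via density of step functions in $L^2(\gamma)$ upgrades Corollary (\ref{simplefunctioncase}), and your componentwise reduction of $III$ to scalar weights $\phi_m=(\bar{w}_k)_s T^m\in L^2(\gamma)$ paired with $G_\epsilon^m=\psi_{x_m}u^{0,\epsilon}\rightharpoonup\psi_{x_m}u^\ast$ in $H_0^1(\co)$ is the intended reading of the vector-valued remark in the text.
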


With these lemmas at hand, we obtain
\[
\begin{split}
   III&=\frac{a}{N}\sum_{i,j=-LN}^{LN-1} \int_{\Gamma_\epsilon^{i,j}} \vec{F}(\epsilon^{-1}\cdot)\cdot \vec{G} ds\\
   &\overset{\epsilon\rta0}{\longrightarrow}a\int_{\Gamma_1}\vec{F} ds \cdot \int_\co \vec{G}_0 d\x\\
   &=a\int_{\Gamma_1}(\bar{w}_k)_s\vec{T}ds \cdot \int_\co \gd\psi u^\ast d\x\\
   &=a\int_{\Gamma_1}(\bar{w}_k)_s(x_1,x_2)_s^Tds \cdot \int_\co \gd\psi u^\ast d\x.
\end{split}
\]
Collecting all the terms after sending \(\epsilon\rta0\), we obtain an equation
\be
U_\infty(\gd\psi x_k)-\int_\co u^\ast\psi_{x_k}-a\int_{\Gamma_1}(\bar{w}_k)_s(x_1,x_2)_s^Tds \cdot \int_\co \gd\psi u^\ast d\x=\int_{\co} f \psi x_k d\x.\label{aftermath}
\ee
Recalling the integral equation (\ref{uepsilon}), we know that 
\[
LHS=U_\infty\lb \gd (\psi x_k)\rb= U_\infty(\gd\psi x_k)+U_\infty(\vec{e}_k \psi),
\]
which, combining with (\ref{aftermath}), provide equality
\[
U_\infty(\vec{e}_k \psi)=\int_\co (u^\ast)_{x_k}\psi+a\int_{\Gamma_1}(\bar{w}_k)_s(x_1,x_2)_s^Tds \cdot \int_\co \psi\gd u^\ast d\x=0,\,k=1,2.
\]
Replacing \(\psi\) by \(\eta_{x_1}\) if \(k=1\) and \(\eta_{x_2}\) if \(k=2\) for some \(\eta\in C_0^\infty(\co)\), and summing up over \(k=1,2\), we see
\[
\int_\co f\eta d\x=U_\infty(\gd\eta)=\int_{\co} \gd u^\ast\cdot(\Sigma_0\gd\eta) d\x.
\]
But the solution is unique, and so \(u^\ast=u^{0,0}\).

\section{Analysis on the Trace of the Effective Diffusion Tensor}

\subsection{Optimization and Balance}

One interesting question at this stage is that, is there a proper way to compare the efficiencies between different patterns? It is very natural to compare two effective diffusion tensors \(\Sigma_0\) and \(\Sigma_0'\) by considering whether 
\[
\Sigma_0\succcurlyeq\Sigma_0'.
\]
However, this straight-forward method is not very practical in use, and so we turn to consider the trace of the effective diffusion tensors \(\mathcal{E}\coloneqq tr\lb\Sigma_0\rb.\) This quantity measures the total effect of the enhancement of the given materials and it is, according to Linear Algebra, certainly a weaker form of comparison by semi-positive-definiteness. 

By classical homogenization \citep{tartarbook}, we have the estimate
\[
\frac{1}{\int_{\msquare}\frac{1}{\bar{\sigma}}} Id \preccurlyeq \Sigma_\delta \preccurlyeq \int_{\msquare} \bar{\sigma} Id,
\]
which, by sending \(\delta\rta0\), gives that
\[
Id\preccurlyeq \Sigma_0 \preccurlyeq (1+a l)Id ,
\]
where \(l\) is the total length of \(\Gamma_1\) in one cell at scale 1. The above inequality give rises to 
\[
2\le \mathcal{E} \le 2+2{a l},
\]
and thereby we ask that when could the right-hand-side inequality replaced by equality?

\vspace{0.4cm}

In fact, the right hand side of the above inequality can be modified: if we take a look at each component of \(\Sigma_0\), we have for \(k,l=1,2\),
\be
\begin{split}
     (\Sigma_0)_{kl} &=\delta_{kl}+a \int_{\Gamma_1} (\bar{w}_k)_s(x_l)_s ds\\
                     &=\delta_{kl}+a \int_{\Gamma_1} (x_k)_s(x_l)_s ds +a \int_{\Gamma_1} (\hat{w}_k)_s(x_l)_s ds,
\end{split}\label{sigmakl}
\ee
where by equation (\ref{effectivecorrectorintegalequation}),
\be
a \int_{\Gamma_1} (\hat{w}_k)_s(x_l)_s ds = -\int_{\msquare} \gd \hat{w}_k\cdot\gd \hat{w}_{l} -a \int_{\Gamma_1} (\hat{w}_k)_s(\hat{w}_l)_s.\label{yyds}
\ee
Therefore, we have
\[
(\Sigma_0)_{kk}=1 + a \int_{\Gamma_1} (x_k)_s^2 -\int_{\msquare} \lw\gd \hat{w}_k\rw^2 -a \int_{\Gamma_1} (\hat{w}_k)_s^2,
\]
which implies that
\be
\begin{split}
    \mathcal{E}&=2+ a\int_{\Gamma_1}\lb(x_1)_s^2+(x_2)_s^2\rb -\int_{\msquare} \lb\lw\gd \hat{w}_1\rw^2+\lw\gd \hat{w}_2\rw^2\rb -a \int_{\Gamma_1} \lb(\hat{w}_1)_s^2+(\hat{w}_2)_s^2\rb\\
    &=2+a l-(energy(\hat{w}_1)+energy(\hat{w}_2))\\
    &\le 2 + a l.
\end{split}\label{2jiaal}
\ee
This inequality also ushers us to the maximization of trace of effective diffusion tensors: to maximize \(\mathcal{E}\), it suffices to minimize the sum of energies of \(\hat{w}_1\) and \(\hat{w}_2\), which means that we should find patterns that render them zero. Indeed, the \emph{balanced} patterns defined before exactly fulfill this property.

\vspace{0.4cm}

\begin{proof}[proof of Theorem (\ref{1.2})]
By density result, we know that if \(\mathcal{G}\) is regular when considered as a 1-periodic pattern on \(\R^2\), each of the corresponding effective corrector equations for \(k=1,2\) admits a unique weak solution in \(Z_{\Gamma_1}^{1,per}(\msquare)\). Thus, one only have to consider the equivalent conditions that ensures \(\hat{w}_1\equiv\hat{w}_2\equiv0\) to be solutions. By the boundary conditions described in Theorem (\ref{1.1}), we know that the three conditions proposed in Theorem (\ref{1.2}) are evidently both sufficient and necessary.
\end{proof}

\vspace{0.4cm}

\subsection{Several Examples of Balanced Patterns}

The classification of balanced patterns will be a distinct subject, and here we only list some examples. It is certain that the following four cases do not cover all possible ones.

\begin{figure}[htbp]
        \centering
        \includegraphics[width=8cm]{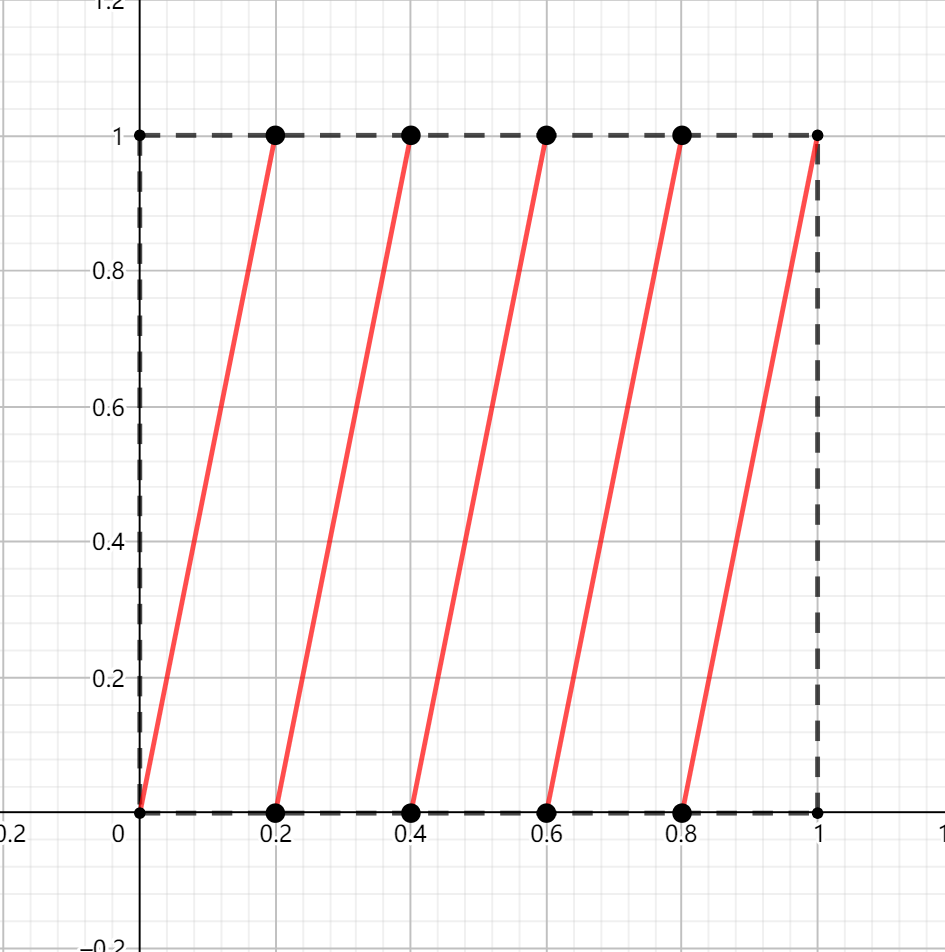}
        \caption{[\textbf{Lines}] Straight lines that take rational slopes are akin to this picture}
         \label{linenoend}
\end{figure}

\begin{figure}[htbp]
     \begin{subfigure}{0.3\textwidth}
         \includegraphics[width=1.6\textwidth]{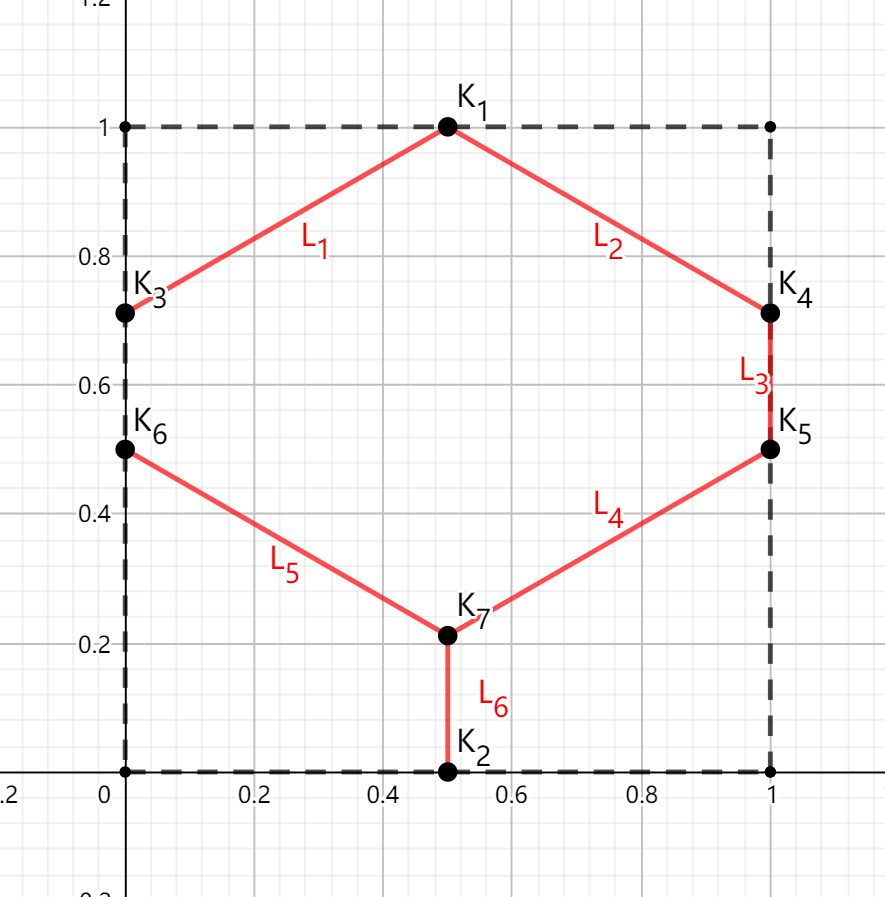}
         \caption{Viewed in one cell.}
         \label{hexagonsa}
     \end{subfigure}
     \hspace{1.2in}
     \begin{subfigure}{0.3\textwidth}
         \includegraphics[width=1.7\textwidth]{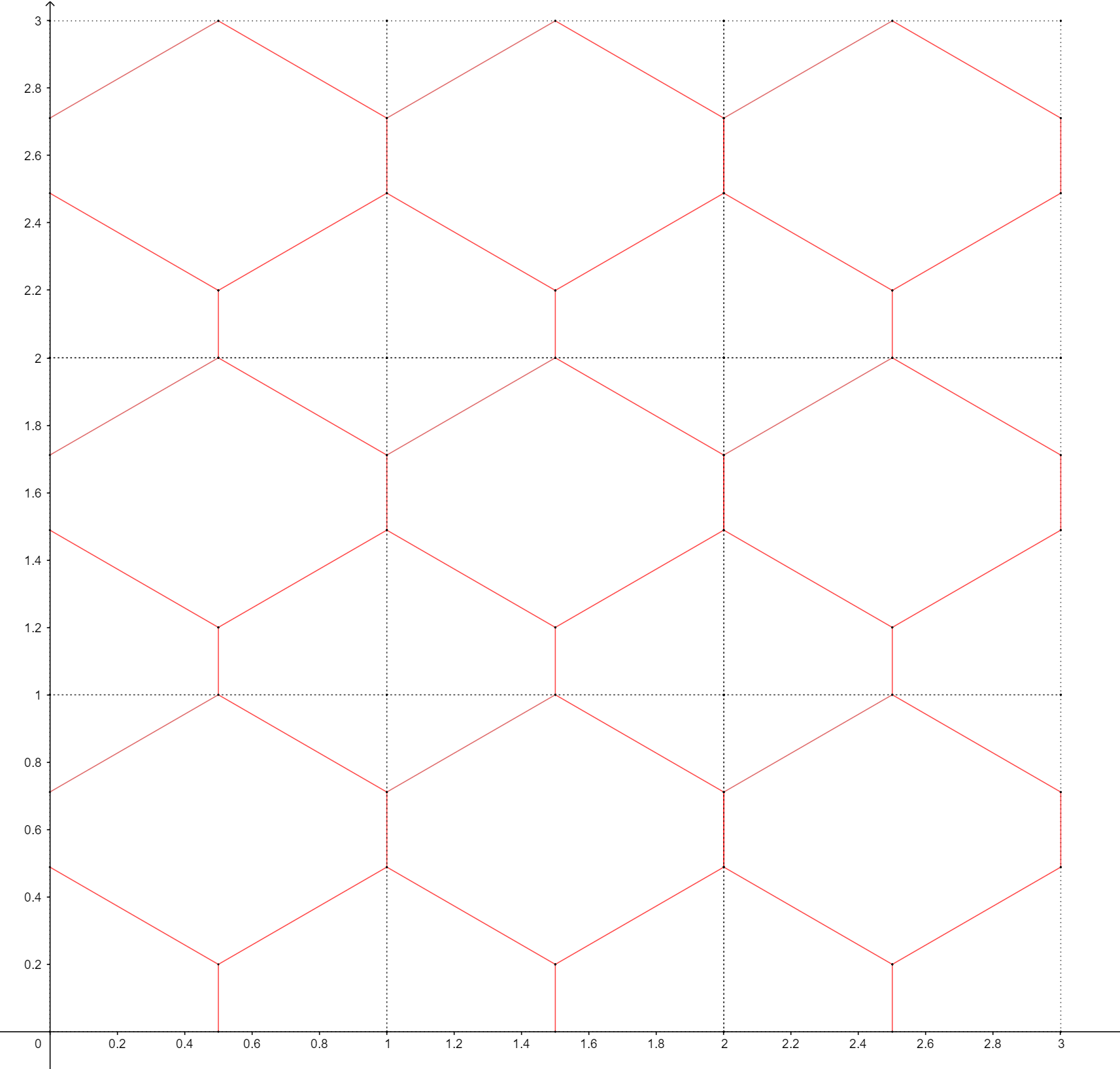}
         \caption{Viewed in a nine-palace.}
         \label{hexagonsb}
     \end{subfigure}
        \caption{[\textbf{Hexagons}] Hexagons with each end point connecting to three arcs can be modified to be balanced}
        \label{hexagons}
\end{figure}

\begin{figure}[htbp]
     \begin{subfigure}{0.3\textwidth}
         \includegraphics[width=1.6\textwidth]{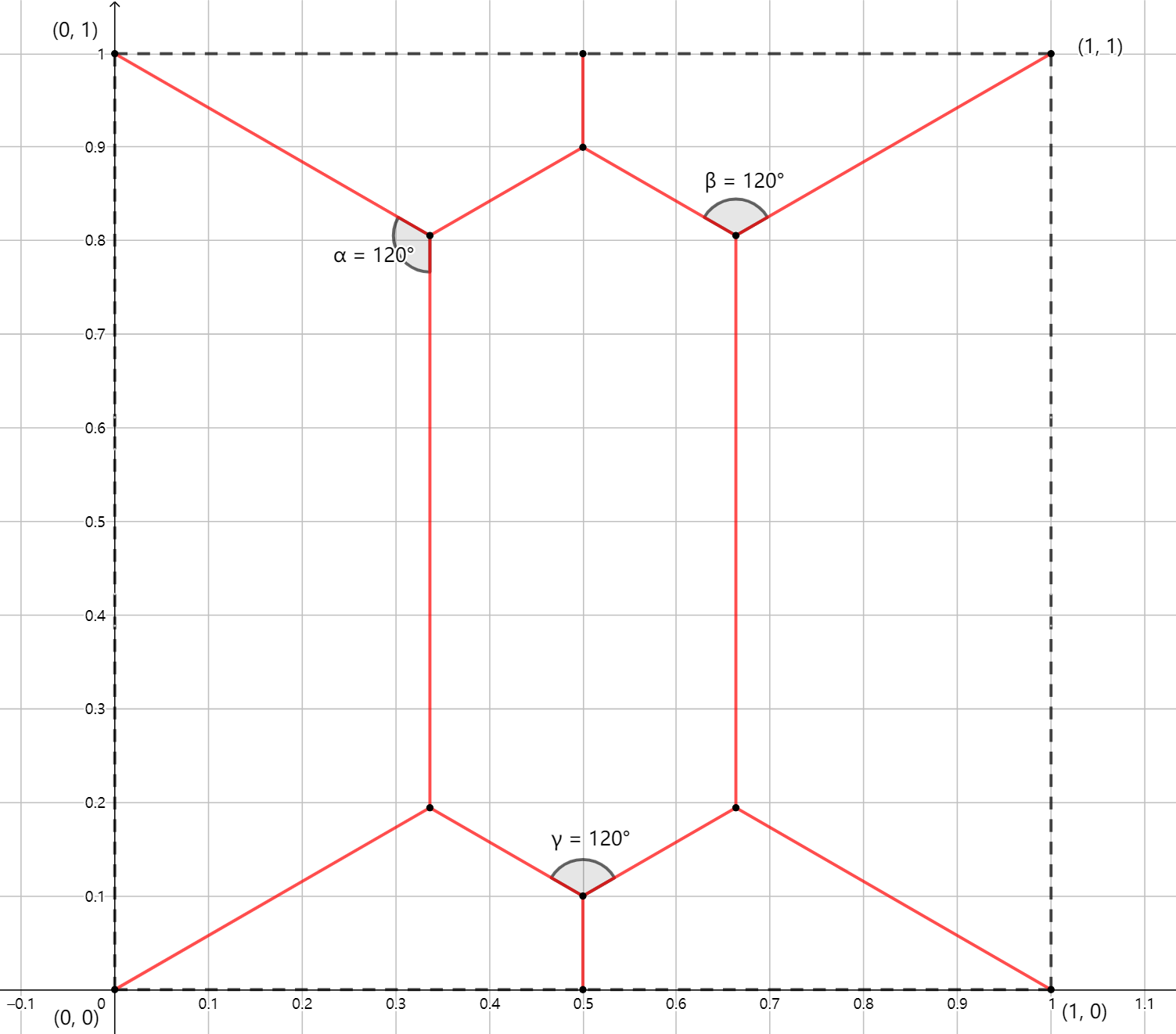}
         \caption{Viewed in one cell.}
         \label{5&hexagonsa}
     \end{subfigure}
     \hspace{1.2in}
     \begin{subfigure}{0.3\textwidth}
         \includegraphics[width=1.7\textwidth]{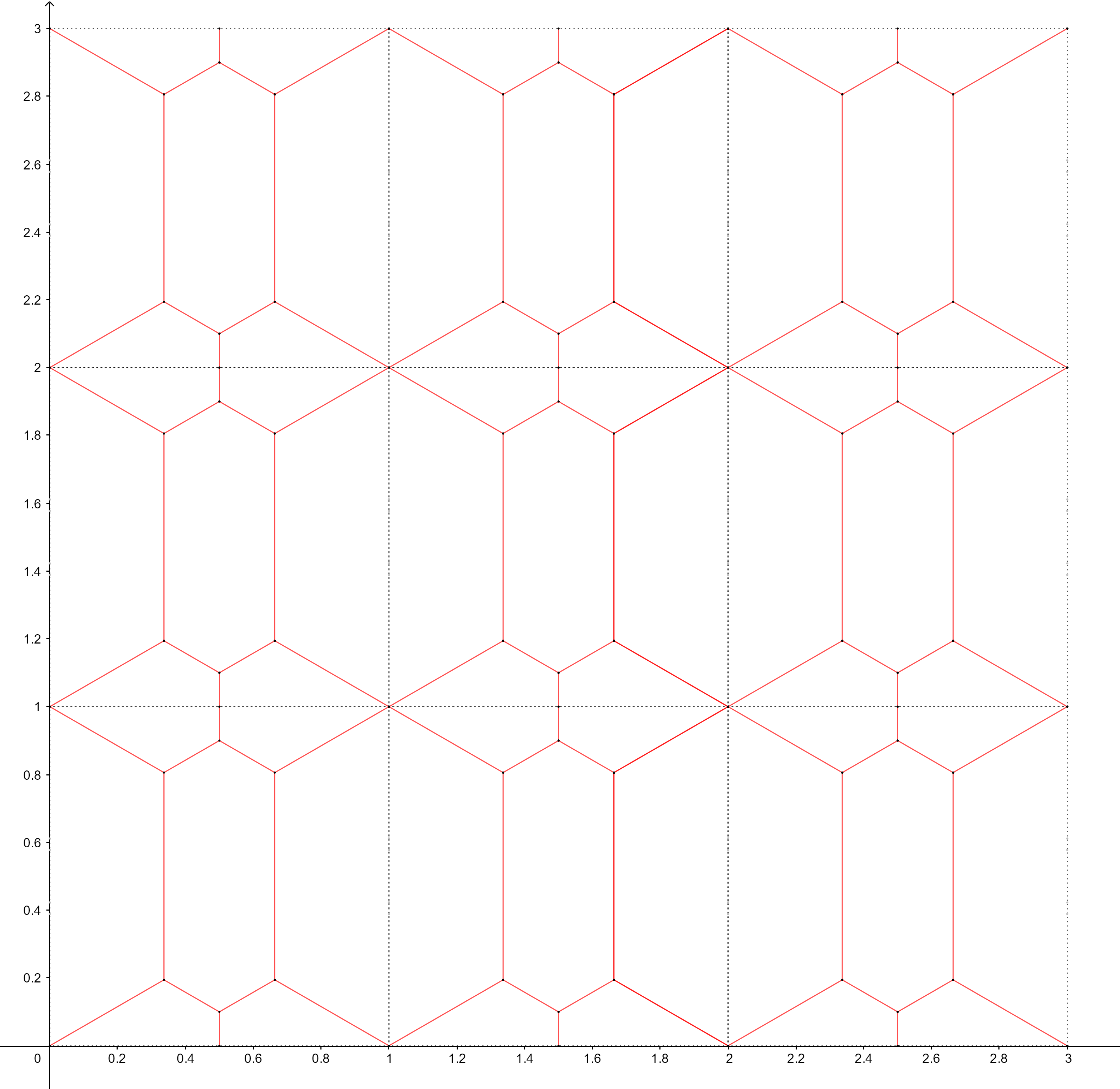}
         \caption{Viewed in a nine-palace.}
         \label{5&hexagonsb}
     \end{subfigure}
        \caption{[\textbf{Others}] Balanced pattern composed of Pentagons and Hexagons}
        \label{5&hexagons}
\end{figure}

\begin{figure}[htbp]
     \begin{subfigure}{0.3\textwidth}
         \includegraphics[width=1.6\textwidth]{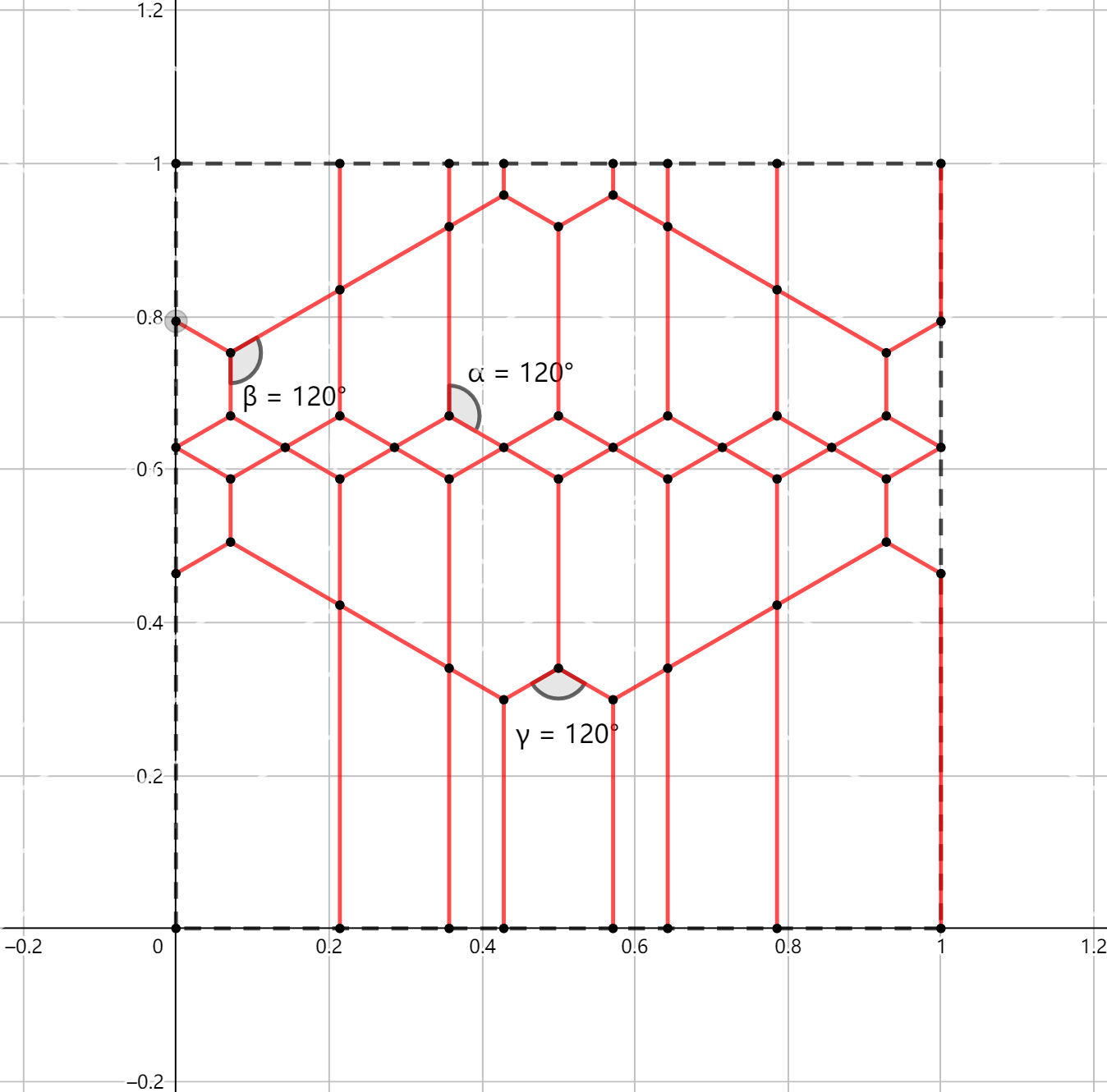}
         \caption{Viewed in one cell.}
         \label{Mixturesa}
     \end{subfigure}
     \hspace{1.2in}
     \begin{subfigure}{0.3\textwidth}
         \includegraphics[width=1.7\textwidth]{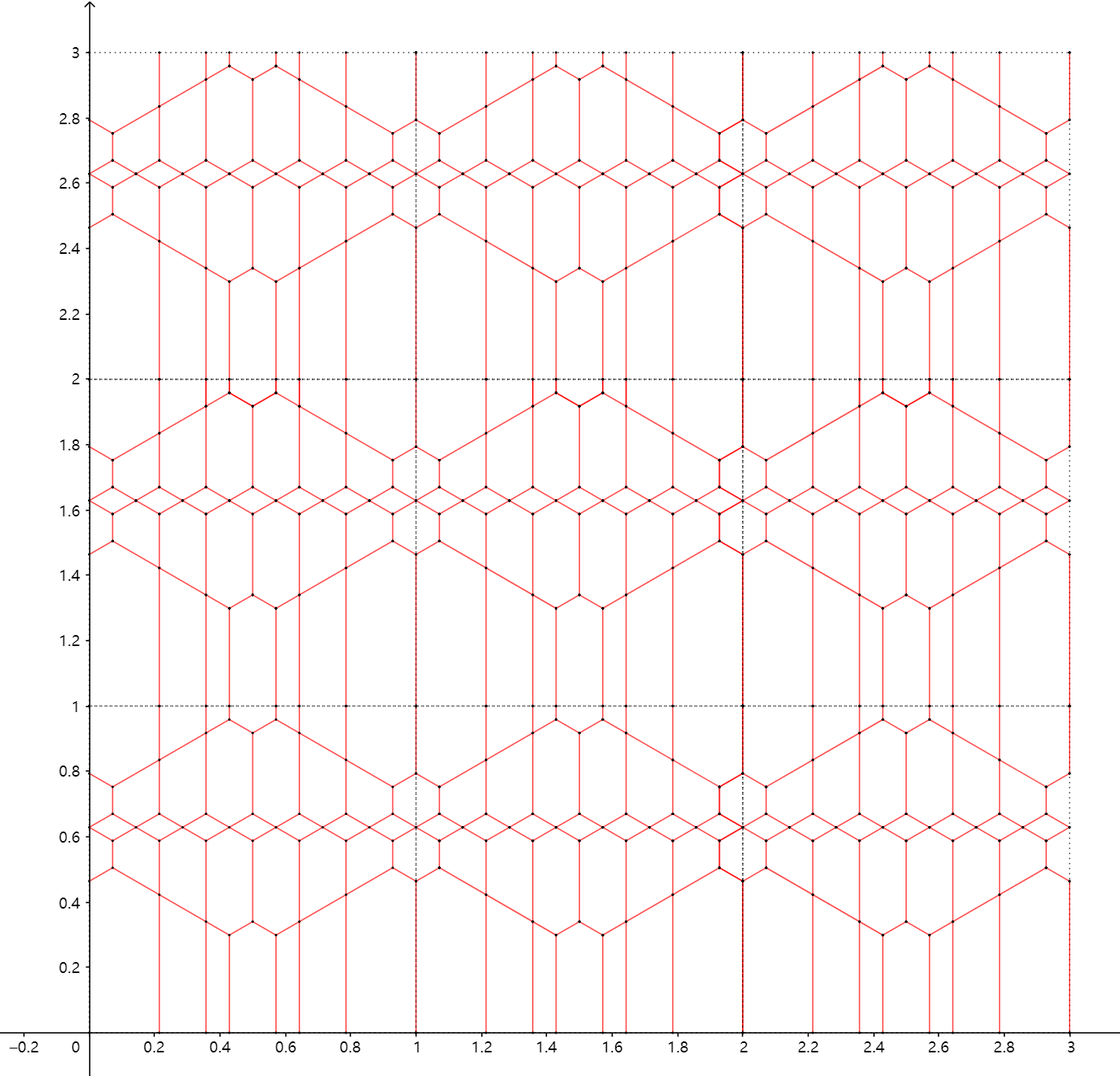}
         \caption{Viewed in a nine-palace.}
         \label{Mixturesb}
     \end{subfigure}
        \caption{[\textbf{Others}] Balanced pattern composed of Parallelograms, Pentagons and Hexagons}
        \label{Mixtures}
\end{figure}

\clearpage

\subsection{As \(a\rta\infty\)}
This subsection mainly discusses the asymptotics of \(tr(\Sigma_0)\) as \(a\rta\infty\). Before everything starts, we briefly prove the asymptotics of the trace as \(a\rta0^+\). According to equation (\ref{sigmakl}), we only have to evaluate the term \(-a\int_{\Gamma_1} (\hat{w}_k)_s(x_k)_s\). By an integration by parts, we obtain
\[
\begin{split}
    -a\int_{\Gamma_1} (\hat{w}_k)_s(x_k)_s&=a\int_{\Gamma_1}\hat{w}_k(x_k)_{ss} + O\lb\sum_{V \text{ is a node of }\mathcal{G}}a |\hat{w}_k(V)|\rb\\
    &\eqqcolon I+II.
\end{split}
\]
Observe that by trace theorem
\[
|I|\le a \lb\int_{\Gamma_1}\hat{w}_k^2\rb^{1/2} \lb\int_{\Gamma_1}(x_k)_{ss}\rb^{1/2}\lesssim a \lb\int_{\msquare}|\gd \hat{w}_k|^2\rb^{1/2},
\]
and similarly
\[
|II|\lesssim a \lb\int_{\msquare}|\gd \hat{w}_k|^2+ a \int_{\Gamma_1}(\hat{w}_k)_s^2\rb^{1/2}.
\]
Combining these two, (\ref{yyds}) and (\ref{2jiaal}), we have 
\[
2+a l-O(a^2)\le tr(\Sigma_0) \le 2+a l.
\]
This estimate shows that when \(a\) is very small, the major contribution of different patterns get close. In order to see the differences among different patterns, we attempt to send \(a\rta\infty\).

\subsubsection{Some Function Spaces}
Given a pattern \(\mathcal{G}\) on a flat torus and a proper periodic extension of it on a plane \(\R^2\), we see that probably a curve \(\gamma\in\mathcal{G}\) crosses the boundary \(\partial \msquare\) of a period \(\msquare=(0,1)^2\). This case is well exhibited in Figure \ref{1typicalexample}, where \(\gamma_1\) and ``\(\gamma_6\)" are considered as one single curve. However, it is clear to see that if one translates the coordinate system of \(\R^2\), \(\gamma_1\) and ``\(\gamma_6\)" combined will be contained in the unit cell (see Figure \ref{translate}). In previous discussions, we simply ignored the influence caused by different translations of coordinate system of \(\R^2\) on the ``real" pattern displayed in the unit cell \(\msquare\), but in this subsection, we will carefully point out the nuances because this is significant in the establishment of the desired estimates.

\begin{figure}[htbp]
        \centering
        \includegraphics[width=6cm]{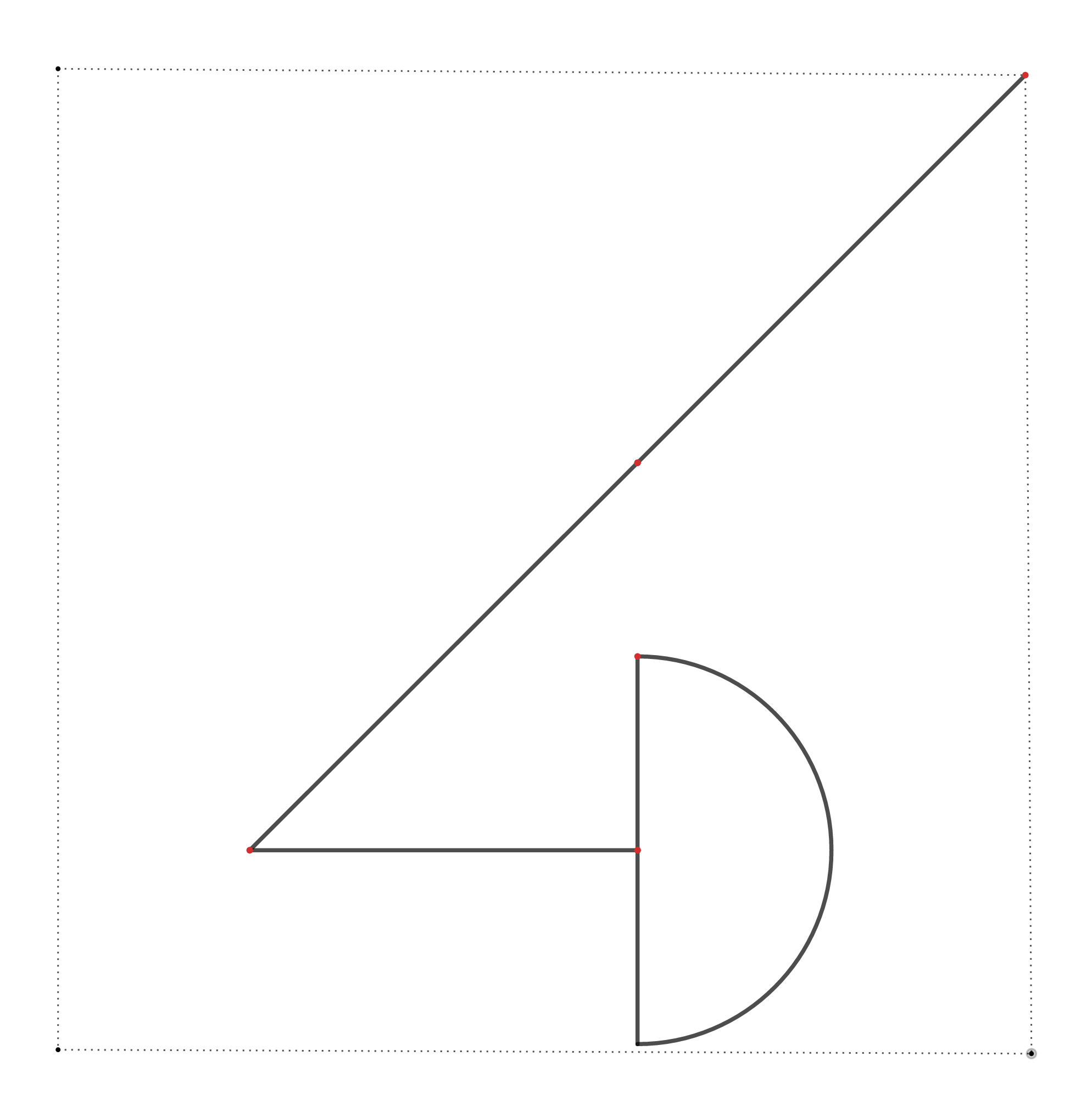}
        \caption{translation of coordinate system in Figure \ref{1typicalexample} makes {\(\gamma_1\)} to avoid crossing the boundary of the unit cell.}
         \label{translate}
\end{figure}

We reuse the original definition of \(\Gamma_1\) in the introduction, where \(\Gamma_1\) is exactly the subset of \(\mathbb{T}^2\) comprising curves in \(\mathcal{G}\). We now differentiate \(\Gamma_1\) and \(\Gamma_1^{\#}\), where the latter is the restriction to one period \(\overline{\msquare}\) of the periodic extension of \(\Gamma_1\) following the extension of \(\mathcal{G}\). Be aware that the latter is dependent on the translation of coordinate system on \(\R^2\), while the prior is not. Collecting the maximal regular components of \(\Gamma_1^{\#}\) in \(\overline{\msquare}\) regardless of the periodicity, we obtain a new pattern \(\mathcal{G}^{\#}\). Observing that in some cases (such as the linear segment \(L_3\) in Figure \ref{hexagonsa}) there are arcs included by the boundary \(\partial \msquare\), we delete one of the arsing two copies in defining \(\Gamma_1^{\#}\).  

\begin{dfn}
We define 
\[
\mathcal{M}^1(\Gamma_1^{\#})\coloneqq \lma u\text{ is measurable on }\Gamma_1^{\#};\;\begin{aligned}
   & \restr{u}{\gamma}\in H^1(\gamma),\,\forall\gamma\in\mathcal{G}^{\#}, \text{ and }\forall V\in \msquare, \,\\
   & \restr{u}{\gamma}(V)=\restr{u}{\gamma'}(V) \text{ for any }\gamma,\gamma'\in\mathcal{G}^{\#}\text{ joint at }V.
\end{aligned}\rma,
\]
and 
\[
\mathcal{M}_{per}^1(\Gamma_1)\coloneqq \lma u\text{ is measurable on }\Gamma_1;\;\begin{aligned}
   & \restr{u}{\gamma}\in H^1(\gamma),\,\forall\gamma\in\mathcal{G}, \text{ and }\forall V\in \mathbb{T}^2, \,\\
   & \restr{u}{\gamma}(V)=\restr{u}{\gamma'}(V) \text{ for any }\gamma,\gamma'\in\mathcal{G}\text{ joint at }V.\end{aligned}\rma.
\]
We endow these two linear spaces with the following inner products
\[
(u,v)_{\mathcal{M}^1(\Gamma_1^{\#})}=\int_{\Gamma_1^{\#}} u v+u_s v_s,\,u,v\in\mathcal{M}^1(\Gamma_1^{\#}),
\]
and 
\[
(u,v)_{\mathcal{M}_{per}^1(\Gamma_1)}=\int_{\Gamma_1} u v+u_s v_s,\,u,v\in\mathcal{M}_{per}^1(\Gamma_1)
\]
respectively, which evidently makes them Hilbert. When the meaning is clear, we simply use \(\mathcal{M}^1\) and \(\mathcal{M}_{per}^1\) to denote the above two spaces.
\end{dfn}

\begin{prop}
There is a canonical inclusion \(\mathcal{M}_{per}^1\hookrightarrow\mathcal{M}^1\) that preserves the inner product. Moreover, \(\mathcal{M}_{per}^1\) is a closed subspace.
\end{prop}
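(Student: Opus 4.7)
The plan is to define the inclusion by pullback and restriction, verify the pointwise and Sobolev conditions cell by cell, and then use $H^1 \hookrightarrow C^{1/2}$ on each arc to show that the defining relations of $\mathcal{M}_{per}^1$ survive passage to the limit in $\mathcal{M}^1$. First I would let $\pi\colon \R^2 \rightarrow \mathbb{T}^2$ be the covering map and, for each $u\in\mathcal{M}_{per}^1$, define $\iota(u) := (u\circ\pi)|_{\Gamma_1^{\#}}$. To check $\iota(u)\in\mathcal{M}^1$, I would argue that every $\gamma^{\#}\in\mathcal{G}^{\#}$ sits inside a single sheet of $\pi^{-1}(\gamma)$ for some $\gamma\in\mathcal{G}$ (locally $\pi$ is an isometry), so the pullback of $u|_\gamma\in H^1(\gamma)$ belongs to $H^1(\gamma^{\#})$. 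Node continuity at any $V\in\msquare$ follows because $\pi(V)\in\mathbb{T}^2$ is a node of $\mathcal{G}$ and the arcs of $\mathcal{G}^{\#}$ meeting at $V$ project bijectively to arcs of $\mathcal{G}$ meeting at $\pi(V)$, where $u$ is continuous by hypothesis.

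Inner product preservation is then essentially a change-of-variables statement: since $\Gamma_1^{\#}$, with the convention of deleting duplicated boundary arcs, is a measurable fundamental domain for the $\mathbb{Z}^2$-action on the periodic extension of $\Gamma_1$, and since $\pi$ is a local isometry (so arc-length parameters and tangential derivatives are preserved), I get $\int_{\Gamma_1^{\#}} \iota(u)\iota(v) + \iota(u)_s\iota(v)_s = \int_{\Gamma_1} uv + u_s v_s$. Injectivity of $\iota$ is automatic because its image already determines the $\mathbb{Z}^2$-periodic extension almost everywhere.

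For closedness, I would take a sequence $u_n\in\mathcal{M}_{per}^1$ with $\iota(u_n)\rightarrow f$ in $\mathcal{M}^1$. On each fixed $\gamma^{\#}\in\mathcal{G}^{\#}$ the convergence is in $H^1(\gamma^{\#})$, hence uniform via the one-dimensional embedding $H^1\hookrightarrow C^{1/2}$. Consequently, for any pair of points $P,P'\in\partial\msquare$ related by an integer translation and lying on arcs of $\mathcal{G}^{\#}$, the identity $\iota(u_n)(P) = \iota(u_n)(P')$ (which holds because $u_n\circ\pi$ is $\mathbb{Z}^2$-periodic) persists in the limit as $f(P)=f(P')$. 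I would then define the candidate periodic function $\bar f$ on $\mathbb{T}^2$ by descending $f$ along $\pi$: this is well defined thanks to the just-proved boundary matching. To conclude $\bar f\in \mathcal{M}_{per}^1$, I would show, for each $\gamma\in\mathcal{G}$, that the pieces of $\pi^{-1}(\gamma)\cap\overline{\msquare}$ glue into an $H^1$ function along $\gamma$: on each piece the restriction is $H^1$, and the matching of pointwise values at the gluing points (again via the $C^{1/2}$ embedding and the limit identity) is exactly the hypothesis of the standard gluing lemma that produces an $H^1$ function on the union. Continuity of $\bar f$ at nodes $V\in\mathbb{T}^2$ either comes directly from node continuity of $f$ at a preimage inside $\msquare$, or, when the unique preimage lies on $\partial\msquare$, from combining node continuity on each side of $\partial\msquare$ with the periodicity identity.

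The main obstacle is the last step: carefully handling the geometry at $\partial\msquare$, in particular when an arc of $\mathcal{G}$ is cut by $\partial\msquare$ into several pieces of $\mathcal{G}^{\#}$, and when a node of $\mathcal{G}$ lifts to a point on $\partial\msquare$ (possibly a corner of $\msquare$, where several translates coincide). In both situations the periodicity information needed to reconstruct $\bar f$ is encoded only in the endpoint values of the $\mathcal{G}^{\#}$-pieces, so everything rests on the fact that $\mathcal{M}^1$ convergence controls these endpoint values uniformly; once that is nailed down, the gluing and descent arguments are routine.
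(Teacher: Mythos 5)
Your proof is correct and takes essentially the same approach as the paper: identify $\Gamma_1^{\#}$ with $\Gamma_1$ via local isometries coming from the periodic extension (your covering map $\pi$), which gives the inner-product-preserving pullback, and descend limits back to $\mathbb{T}^2$ by matching endpoint values across $\partial\msquare$. The paper waves off the closedness check as ``routine''; you fill it in carefully with the one-dimensional embedding $H^1\hookrightarrow C^{1/2}$ and the standard $H^1$ gluing lemma, which is the expected elaboration rather than a different route.
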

\begin{proof}
We may clearly identify \(\Gamma_1\) with \(\Gamma_1^{\#}\) geometrically in an essential way, that is, there are isometries (obtained from the periodic extension of \(\mathcal{G}\)) from each regular components of \(\Gamma_1^{\#}\) to those of \(\Gamma_1\) with finitely many points deleted (in a translated coordinate system on \(\R^2\), \(\Gamma_1^{\#}\) changes and the deleted points on \(\Gamma_1\) may vary). These isometries make elements in \(\mathcal{M}_{per}^1\) functions on \(\Gamma_1^{\#}\) through composition in a piecewise manner. The conditions in the definition of \(\mathcal{M}_{per}^1\) ensure that such functions must be in \(\mathcal{M}^1\). The rest are routine proofs.
\end{proof}

We present a Poincar\'{e} Inequality on bounded connected patterns. Here ``bounded" refers to the boundedness of the length of all the arcs involved in the pattern and ``connected" refers that given any two points on the pattern there are arcs composing a piecewise \(C^1\) curve in \(\R^2\) connecting them.

\begin{lem}\emph{(\textbf{Poincar\'{e} Inequality on Bounded Connected Patterns})}
Let \(\mathcal{G}\) be a bounded connected pattern, and \(\Gamma\) the union of all arcs. We have
\be
\lb\int_{\Gamma} (u-<u>)^2\rb^{1/2} \lesssim \lb\int_{\Gamma} u_s^2\rb^{1/2},\,\forall u\in \mathcal{M}^1,
\ee
where \(<u>=\int_{\Gamma} u/l\) with \(l\) the total length of arcs in \(\mathcal{G}\).\label{poincare}
\end{lem}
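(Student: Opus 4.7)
The strategy is a direct argument by path integration, exploiting that a pattern is a one-dimensional object on which Sobolev embedding gives true pointwise continuity. First, on each arc $\gamma \in \mathcal{G}$, the one-dimensional embedding $H^1(\gamma) \hookrightarrow C^{1/2}(\overline{\gamma})$ provides a continuous representative of $u|_\gamma$ up to the endpoints, and the node-matching condition built into the definition of $\mathcal{M}^1$ lets me patch these representatives into a function that is genuinely continuous on all of $\Gamma$.

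Since $\mathcal{G}$ is a finite connected collection of arcs of bounded length, the intrinsic (geodesic) diameter of $\Gamma$ is finite; call it $L$ (it is bounded crudely by the total length of all arcs, via the shortest path in the underlying graph). For any $p,q \in \Gamma$ I would fix a piecewise-smooth path $\pi_{p,q} \subset \Gamma$ from $p$ to $q$ of length at most $L$. Applying the fundamental theorem of calculus arc-by-arc along $\pi_{p,q}$ and using node-continuity to telescope the contributions at intermediate nodes, one obtains
\[
u(p) - u(q) = \int_{\pi_{p,q}} u_s\, ds,
\]
and Cauchy--Schwarz yields the pointwise bound $|u(p) - u(q)|^2 \leq L \int_\Gamma u_s^2\, ds$. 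Writing $u(p) - \langle u\rangle = \frac{1}{l}\int_\Gamma (u(p) - u(q))\, dq$ and applying Jensen then gives $|u(p) - \langle u\rangle|^2 \leq L \int_\Gamma u_s^2\, ds$ uniformly in $p$, so integrating in $p$ over $\Gamma$ produces the claimed inequality with constant $\sqrt{lL}$.

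The one delicate point is the telescoping at nodes: the boundary values from two arcs joining at a node must match pointwise, and this is exactly what the $\mathcal{M}^1$ node-continuity condition delivers through the $C^{1/2}$ representatives from step one. If one prefers to avoid explicit path selection, the conclusion follows equally well by a compactness-contradiction argument: a violating sequence $u_n \in \mathcal{M}^1$ with $\langle u_n\rangle = 0$, $\|u_n\|_{L^2(\Gamma)} = 1$, and $\|(u_n)_s\|_{L^2(\Gamma)} \to 0$ would, by applying Rellich on each of the finitely many arcs and extracting a common subsequence, produce a limit $u \in \mathcal{M}^1$ with $u_s \equiv 0$ arc-wise, hence locally constant, hence globally constant on $\Gamma$ by connectedness and node continuity, hence zero by the mean-zero condition, contradicting $\|u\|_{L^2(\Gamma)} = 1$.
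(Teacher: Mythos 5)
Your proposal is correct and takes essentially the same route as the paper's proof: apply the fundamental theorem of calculus arc-by-arc along a connecting path, telescope through the nodes using the matching condition built into \(\mathcal{M}^1\), bound \(|u(p)-u(q)|\) by Cauchy--Schwarz against the total arc length, and then average over \(q\) and integrate over \(p\) (Jensen) to get the \(L^2\) bound with constant \(O(l)\). The preliminary Sobolev-embedding remark and the alternative compactness-contradiction argument are fine supplements but do not change the core argument, which coincides with the paper's.
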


\begin{proof}
For any two points \(x,y\) on the pattern, we have, by connectedness, \(\gamma_1,\cdots,\gamma_k\in\mathcal{G},\,k\in\N_+\) such that \(x\) is contained in \(\gamma_1\) and \(y\) contained in \(\gamma_k\), and each \(\gamma_i\) connects to \(\gamma_{i+1}\) for all \(i=1,\cdots,k-1\) forming a piecewise \(C^1\) curve in \(\R^2\). Denoting by \(V_i,\,i=1,\cdots,k-1\) the intersection points of these curves between \(x\) and \(y\), we have
\[
\begin{split}
  |u(x)-u(y)|&\le \sum_{i=1}^{k-2}|u(V_i)-u({V_{i+1}})| + |u(x)-u(V_1)| + |u(V_{k-1})-u(y)|\\
             &\le \sum_{i=1}^{k-2}\int_{\gamma_{i+1}}|u_s| + \int_{\gamma_1} |u_s| + \int_{\gamma_k} |u_s|\\
             &\le  l^{1/2} \lb\int_{\Gamma} u_s^2\rb^{1/2}.
\end{split}
\]
Now, we have
\[
\int_{\Gamma} (u-<u>)^2=\int_{\Gamma}\lb\frac{\int_{\Gamma}|u(x)-u(y)|d y}{l}\rb^2 d x \le l^2 \int_{\Gamma} u_s^2.
\]
\end{proof}

We wish to consider the semi-inner-product
\[
<u,v>_{\mathcal{M}^1}=\int_{\Gamma_1} u_s v_s,\,u,v\in \mathcal{M}^1,
\]
through which we may define a semi-norm
\[
p(u)=\sqrt{<u,u>_{\mathcal{M}^1}}.
\]
We define
\[
K^1=\lma u\in\mathcal{M}^1;\; p(u)=0\rma,\,K_{per}^1=K^1\cap\mathcal{M}_{per}^1.
\]
According to Lemma (\ref{poincare}), we know that functions in \(K^1\) take constant values on each connected components of \(\Gamma_1\), and because the number of arcs is finite in each cell, \(\text{dim } K_{per}^1\le \text{dim } K^1<\infty\). We further define
\be
M^1=(K^1)^{\perp},\,M_{per}^1=(K^1)^{\perp}\cap \mathcal{M}_{per}^1.
\ee
Also by Poincar\'{e} Inequality, we know that the restrictions of ``\(<,>_{\mathcal{M}^1}\)" to \(M^1\) and \(M_{per}^1\) now are inner products and are equivalent to the original inner products. From now on, we consider \(M^1\) and \(M_{per}^1\) as Hilbert spaces with inner product ``<,>".

\subsubsection{The Estimate}

As in the case \(a\rta0^+\), we still evaluate the term \(-a\int_{\Gamma_1} (\hat{w}_k)_s(x_k)_s\). We may assume that \((\hat{w}_k)_s\) is not constant 0. Notice that \(\hat{w}_k\in \mathcal{M}_{per}^1\) and \(x_k\in \mathcal{M}^1\). Let \(w\) be the projection of \(\hat{w}_k\) into \(M_{per}^1\), and \(z\) the projection of \(x_k\) into \(M^1\). Thus, we can rewrite
\[
\frac{\lw \int_{\Gamma_1} (\hat{w}_k)_s(x_k)_s\rw}{\lb\int_{\Gamma_1}(\hat{w}_k)_s^2\rb^{1/2}}=\frac{\lw \int_{\Gamma_1} w_s z_s\rw}{\lb\int_{\Gamma_1}w_s^2\rb^{1/2}}.
\]
Let \(d_k^{\#}=dist(z,M_{per}^1)\), we obtain by elementary geometry
\[
\frac{\lw \int_{\Gamma_1} w_s z_s\rw}{\lb\int_{\Gamma_1}w_s^2\rb^{1/2}}\le \lb <z,z>-(d_k^{\#})^2  \rb^{1/2},
\]
and then 
\be
{\lw \int_{\Gamma_1} (\hat{w}_k)_s(x_k)_s\rw}\le {\lb\int_{\Gamma_1}(\hat{w}_k)_s^2\rb^{1/2}}\lb \int_{\Gamma_1}(x_k)_s^2-(d_k^{\#})^2   \rb^{1/2}.\label{dkine}
\ee
According to equation (\ref{yyds}), we have
\be
{\int_{\Gamma_1}(\hat{w}_k)_s^2}\le {\lw \int_{\Gamma_1} (\hat{w}_k)_s(x_k)_s\rw}. \label{less}
\ee
Inserting estimate (\ref{less}) into (\ref{dkine}), we obtain
\[
{\lw \int_{\Gamma_1} (\hat{w}_k)_s(x_k)_s\rw}\le \int_{\Gamma_1}(x_k)_s^2 - (d_k^{\#})^2 ,
\]
which shows that
\[
(\Sigma_0)_{kk}\ge 1+ a (d_k^{\#})^2 ,
\]
and hence 
\be
tr(\Sigma_0)\ge 2 + a \lb(d_1^{\#})^2+(d_2^{\#})^2\rb.
\ee
Stopping here will not be enough because we find that in some cases, \((d_1^{\#})^2+(d_2^{\#})^2\) could be zero. Moreover, the translation of coordinate system also possibly change \((d_1^{\#})^2+(d_2^{\#})^2\), and thus, to make a more intrinsic estimate, we take \(d^2\) to be the maximum of \((d_1^{\#})^2+(d_2^{\#})^2\) over all translations of coordinate system.

\subsection{Examples on Patterns with different \(0\le d\le \sqrt{l}\)}

\begin{itemize}
    \item[1.] The trace of the effective diffusion tensor of balanced patterns are exactly \(2+a l\), regardless of the translations of coordinate system;
    \item[2.] There is a pattern having the corresponding \(d=0\). 
    
\begin{figure}[htbp]
        \centering
        \includegraphics[width=9cm]{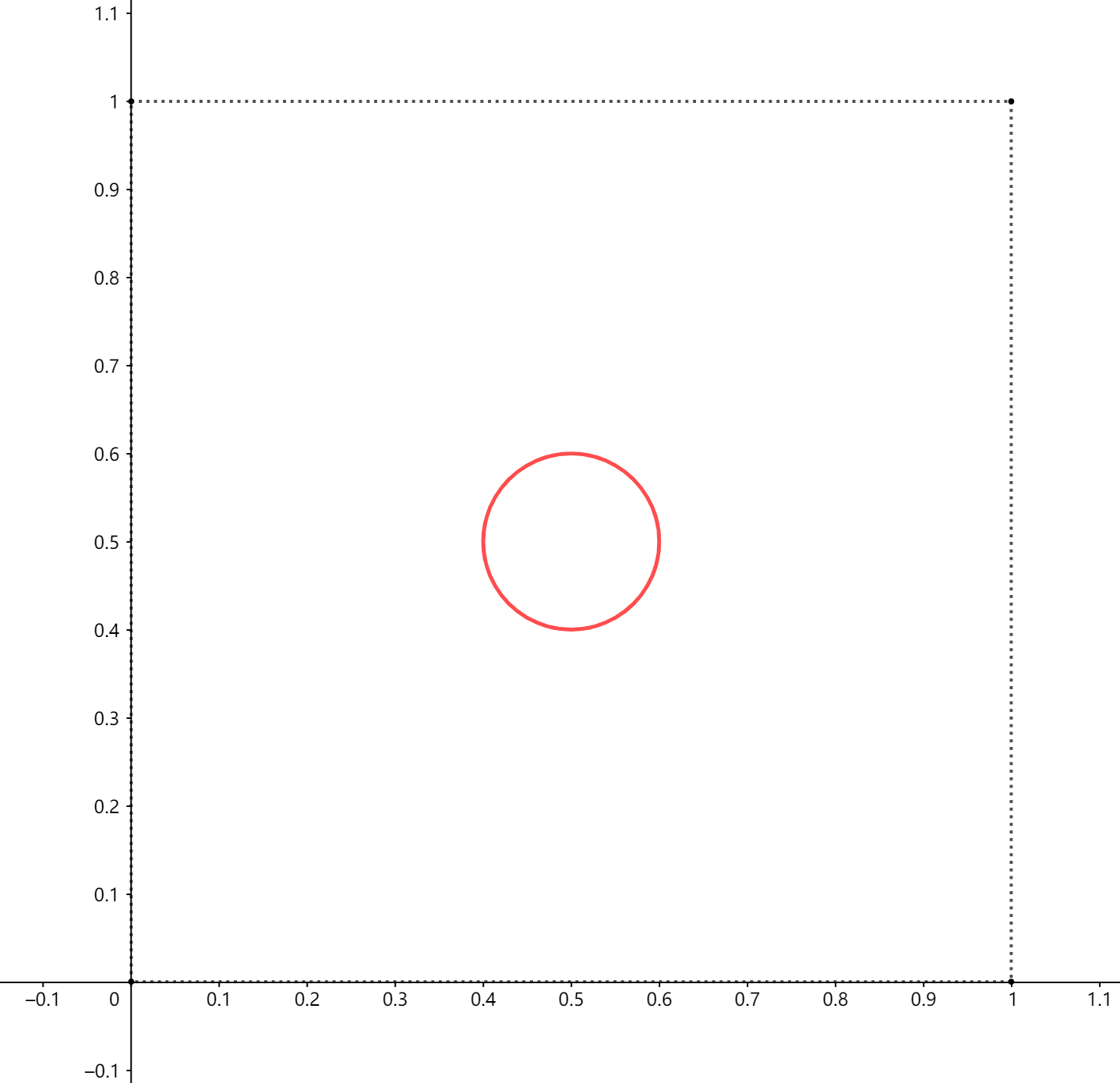}
        \caption{In the center of each cell there is a circle of radius 0.1; this is also considered as \(\Gamma_1^{\#}\) of Type 0}
         \label{circle}
\end{figure}

\begin{figure}[htbp]
     \begin{subfigure}{0.3\textwidth}
         \includegraphics[width=1.4\textwidth]{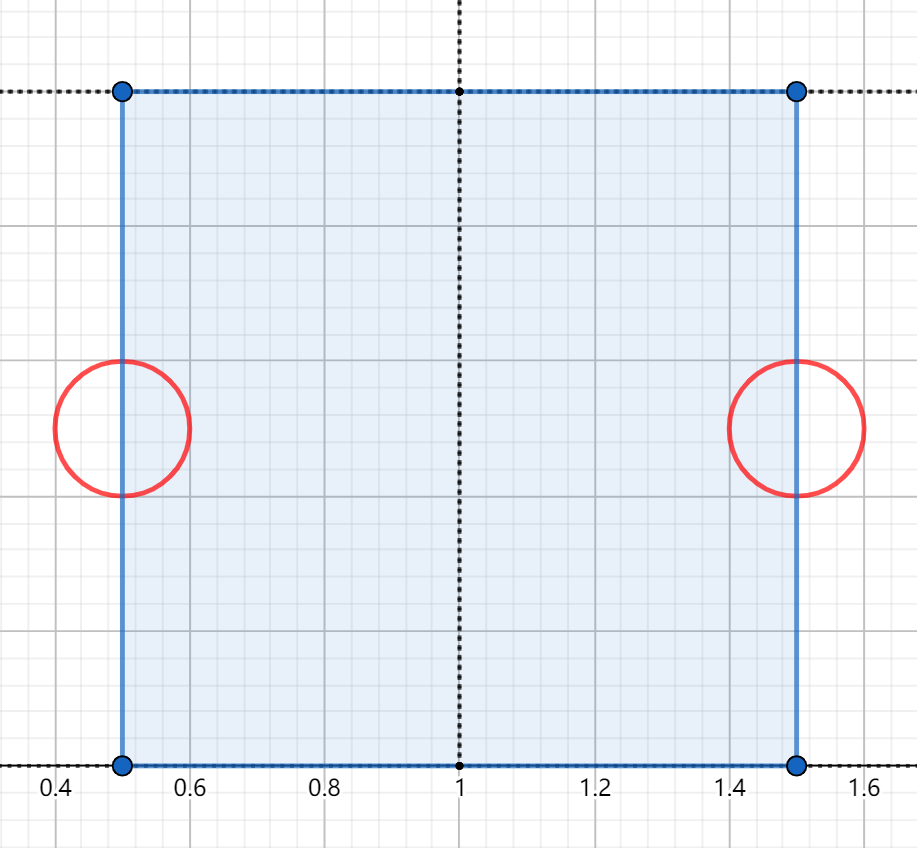}
         \caption{\(\Gamma_1^{\#}\) of Type 1}
         \label{1}
     \end{subfigure}
     \hspace{1.2in}
     \begin{subfigure}{0.3\textwidth}
         \includegraphics[width=1.3\textwidth]{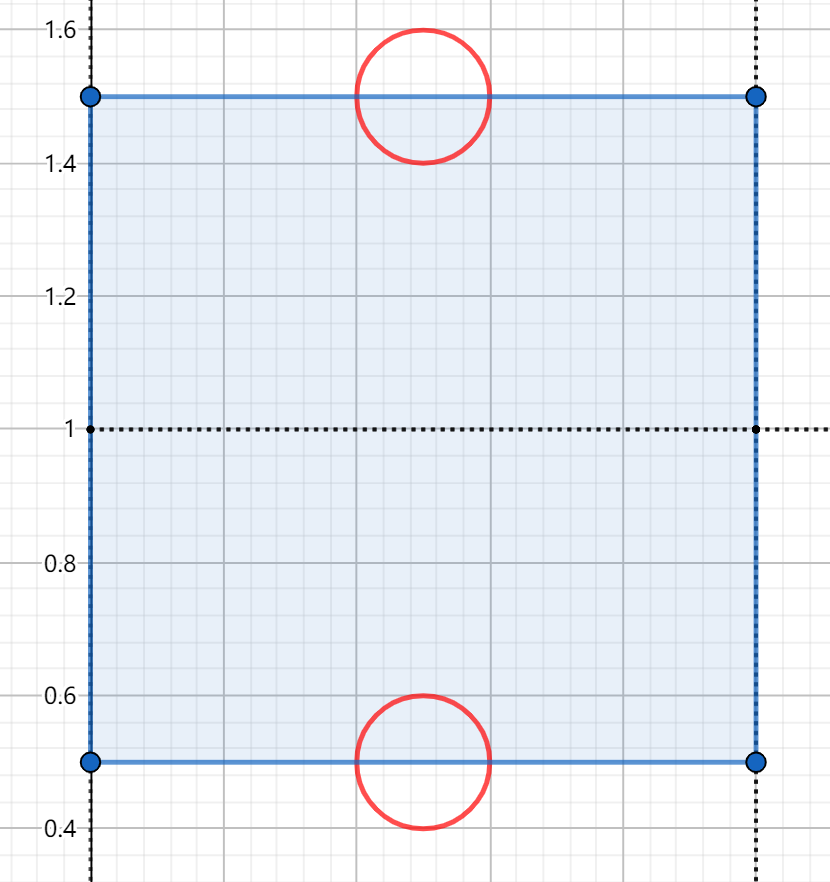}
         \caption{\(\Gamma_1^{\#}\) of Type 2}
         \label{2}
     \end{subfigure}
        \caption{Type 1 and 2 are similar, but have different effects on calculating \(d_k^{\#}\) for a given \(k=1,2\)}
        \label{12}
\end{figure}   
          
\begin{figure}[htbp]
     \begin{subfigure}{0.3\textwidth}
         \includegraphics[width=1.2\textwidth]{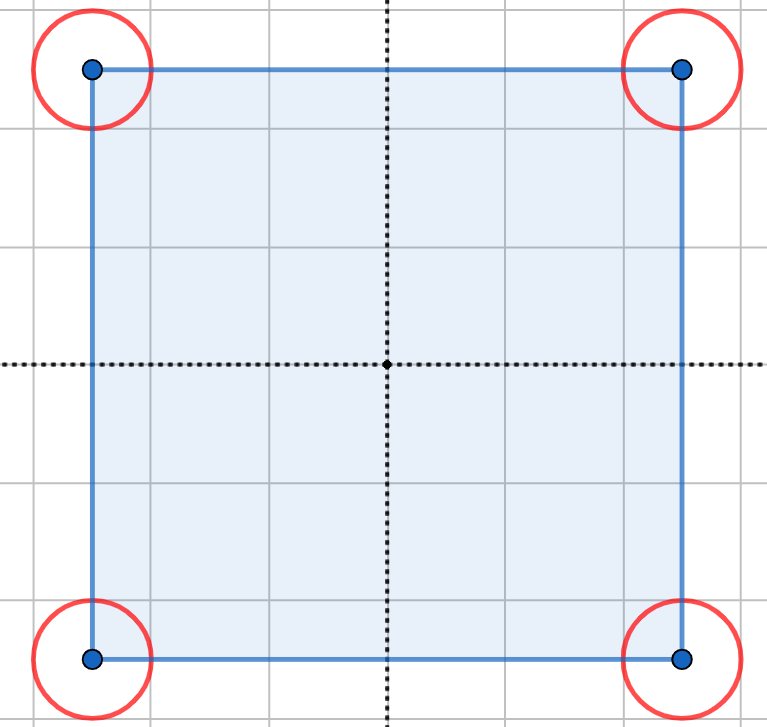}
         \caption{\(\Gamma_1^{\#}\) of Type 3}
         \label{3}
     \end{subfigure}
     \hspace{1.2in}
     \begin{subfigure}{0.3\textwidth}
         \includegraphics[width=1.2\textwidth]{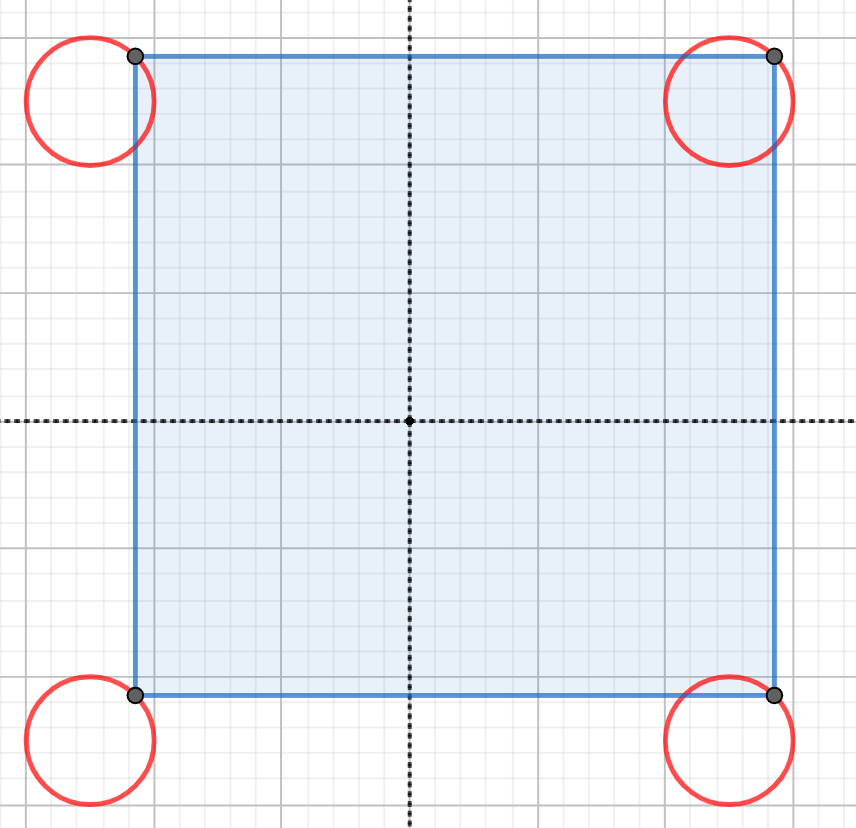}
         \caption{\(\Gamma_1^{\#}\) of Type 4}
         \label{4}
     \end{subfigure}
     \hspace{1.2in}
     \centering
     \begin{subfigure}{0.3\textwidth}
         \includegraphics[width=1.4\textwidth]{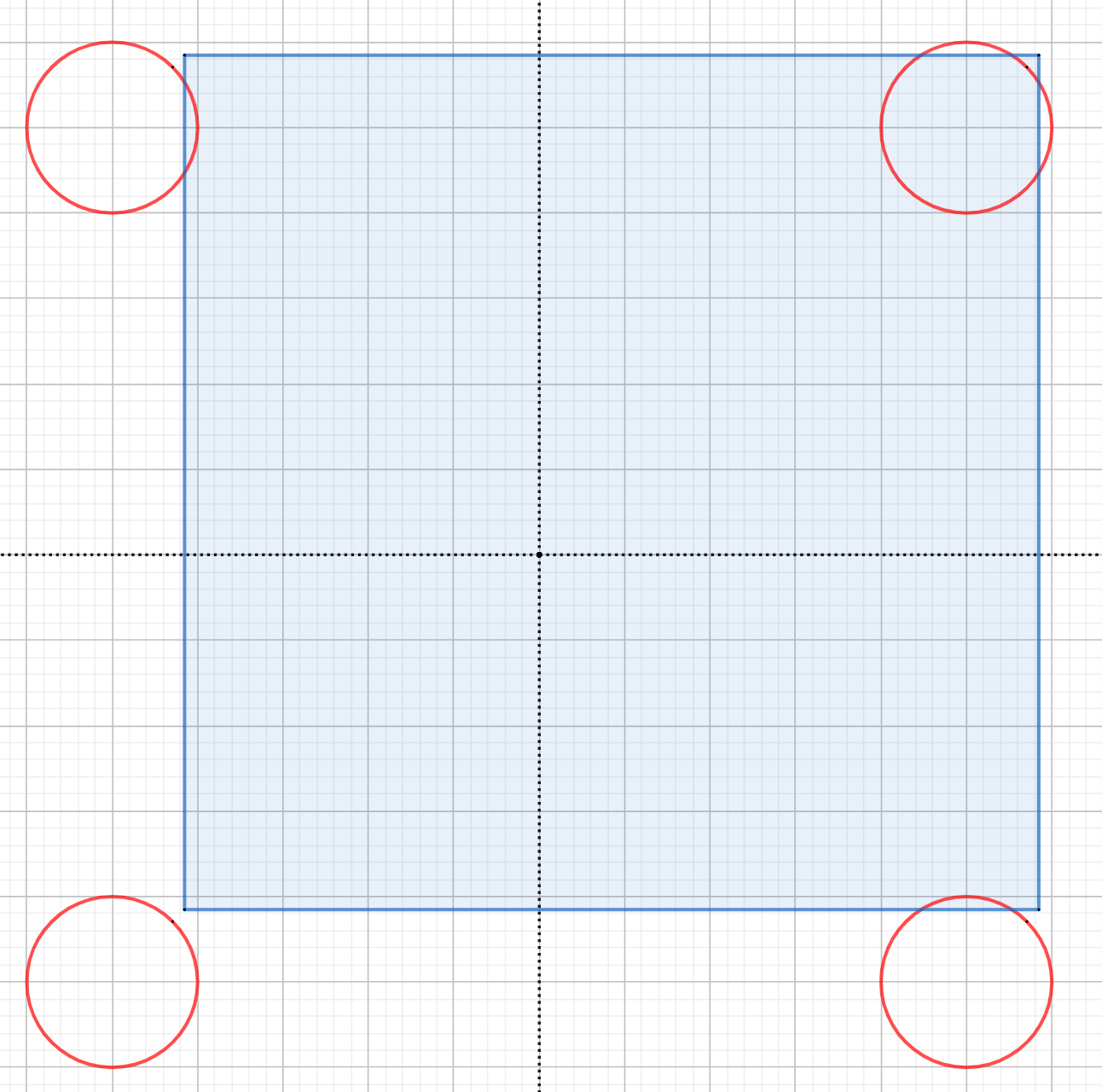}
         \caption{\(\Gamma_1^{\#}\) of Type 5}
         \label{5}
     \end{subfigure}
        \caption{Type 3 and 5 have four connected components, while Type 4 has three.}
        \label{34}
\end{figure}  

\clearpage

    We start computing \(d\) for the above six types of \(\Gamma_1^{\#}\). It is clear that in type 0, \(\mathcal{M}^1\) coincides with \(\mathcal{M}_{per}^1\), and so the distance \(d_k^{\#}=0\) for all \(k=1,2\). In type 1 and 2, the projections of \(x_k\)'s to \(M^1\) are contained in \(M_{per}^1\), although, at this moment, \(x_k\)'s are not in \(\mathcal{M}_{per}^1\), which again forces \(d_k^{\#}=0\). Similar proof also holds for type 3, 4 and 5. It is not hard to see that \(\Gamma_1^{\#}\) under any translation of coordinate system of \(\R^2\) can be classified topologically into one of the six types listed above, and \(d_k^{\#}\)'s remain constant in each class. In conclusion, \(d\) should be 0;
    
    \item[3.] There is a pattern having the corresponding \(0<d<\sqrt{l}\):
    
    \begin{figure}[htbp]
        \centering
        \includegraphics[width=9cm]{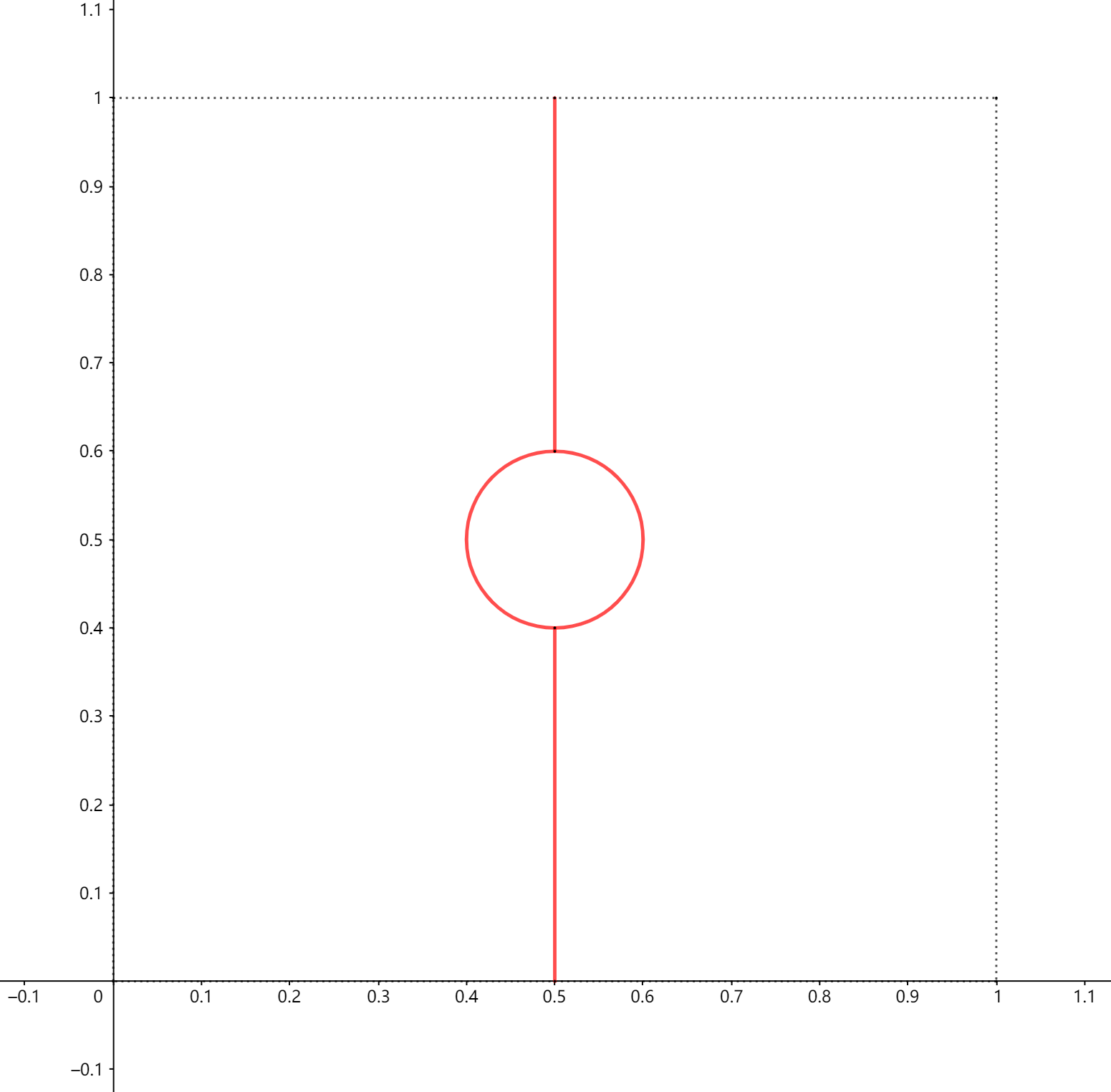}
        \caption{In the center of each cell there is a circle of radius 0.1 and there is one linear segment crossing the boundary of the unit cell and connecting the north and south pole of the circle}
         \label{circleline}
\end{figure}
    
    In this case, \(K^1\) is exactly one dimensional and consists of constant functions only. Observing that \(x_2=y\pm\) some constant \(C\) can never be in \(M_{per}^1\), we see \(d_2^{\#}>0\), and hence \(d>0\). Moreover, the involved circle makes the pattern non-balanced, which ensures that \(d<\sqrt{l}\).
   
\end{itemize}
 
    \begin{thm}
    From example 3., we see that if (in some translated coordinate system) \(\Gamma_1^{\#}\) is connected, and there is an arc of the pattern crossing \(\partial \msquare\), then we have \(0< d \le \sqrt{l}\) such that
    \[
    2+d^2 a \le tr(\Sigma_0) \le 2+ l a.
    \]
    \end{thm}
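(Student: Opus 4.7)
The plan is to split the bound into its two halves. The upper inequality $tr(\Sigma_0)\le 2+la$ has already been derived in (\ref{2jiaal}), so I only need to verify $d\le\sqrt{l}$. This is straightforward: since $0\in M_{per}^1$, the definition of $d_k^{\#}$ as a distance yields $(d_k^{\#})^2\le\langle z_k,z_k\rangle=\int_{\Gamma_1}(x_k)_s^2\,ds$, because $z_k$ differs from $x_k$ by an element of $K^1$ whose arc-length derivative vanishes. Summing over $k=1,2$ and using the arc-length identity $(x_1)_s^2+(x_2)_s^2\equiv 1$ on $\Gamma_1$ gives $(d_1^{\#})^2+(d_2^{\#})^2\le l$ for every translation of the coordinate system, hence $d\le\sqrt{l}$.

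The lower inequality $2+d^2a\le tr(\Sigma_0)$ is precisely what was established in the derivation immediately preceding this theorem, combining (\ref{dkine}) with (\ref{less}) and summing over $k=1,2$. So the only thing left to show is the \emph{strict} positivity $d>0$. To do this I would fix the translation supplied by the hypothesis, in which $\Gamma_1^{\#}$ is connected and some $\gamma\in\mathcal{G}^{\#}$ meets $\partial\msquare$. Connectedness of $\Gamma_1^{\#}$ combined with Lemma (\ref{poincare}) forces $K^1$ to be one-dimensional and to consist of the constant functions, so the projection $z_k$ of $x_k$ onto $M^1$ is simply $x_k-\bar{x}_k$ with $\bar{x}_k$ the mean of $x_k$ over $\Gamma_1^{\#}$. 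Since $M_{per}^1\subset\mathcal{M}_{per}^1$, showing $d_k^{\#}>0$ reduces to showing that $x_k-\bar{x}_k$ fails to lie in $\mathcal{M}_{per}^1$.

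Here is the core of the argument. The hypothesis yields an arc $\gamma$ whose endpoints (or interior intersection with $\partial\msquare$) give rise to a pair of distinct points $p_0,p_1\in\partial\msquare$ that are identified under the periodic quotient $\overline{\msquare}\to\mathbb{T}^2$, with $p_1-p_0=\vec{e}_j$ for some $j\in\{1,2\}$ (corner crossings give a nonzero integer combination of $\vec{e}_1,\vec{e}_2$, which only strengthens the conclusion). Any $u\in\mathcal{M}_{per}^1$ must satisfy $u(p_0)=u(p_1)$ after the identification, whereas the coordinate function obeys $x_j(p_0)-x_j(p_1)=-1$. Thus $z_j=x_j-\bar{x}_j$ cannot belong to $\mathcal{M}_{per}^1$, so $d_j^{\#}>0$, and consequently $d\ge d_j^{\#}>0$ after maximizing over translations.

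The only real subtlety I anticipate is making the ``identified points'' step rigorous in the presence of nodes on $\partial\msquare$: one must verify that the chosen $p_0,p_1$ really are admissible evaluation points for the trace of a function in $\mathcal{M}_{per}^1$ and that the identification is forced by the definition rather than merely suggested by it. This is handled by the convention, already used in Subsection~4.3.1, that the periodic extension of $\mathcal{G}$ identifies the two boundary copies of the arc crossing $\partial\msquare$ into a single element of $\mathcal{G}$, so that the pointwise equality $u(p_0)=u(p_1)$ is built into the $H^1(\gamma)$ trace at the shared node. Once this is in place, the proof concludes immediately by combining the two halves.
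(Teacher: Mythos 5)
Your proof is correct and follows essentially the same approach as the paper, which presents the theorem as a direct consequence of Example 3 and the estimate immediately preceding it rather than writing out a separate proof. Your expansion of the key step — that the boundary-crossing arc identifies two points of $\overline{\msquare}$ differing by a nonzero lattice vector, so $x_j-\bar{x}_j\notin\mathcal{M}_{per}^1$ and hence $d_j^{\#}>0$ — matches the paper's implicit argument in Example 3 and fills in exactly the details the paper glosses over.
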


\clearpage

\section*{Acknowledgements}

My deepest gratitude goes first and foremost to Prof. Xuefeng Wang, my supervisor, for his constant encouragement and guidance. Without his conscientious help, this article could not have come into being. I also thank Prof. Zhen Zhang for many useful discussions and comments on this article.

\bibliographystyle{plain}
\bibliography{ref}

\end{document}